\documentclass[12pt]{amsart}

\usepackage[hmargin=0.8in,height=8.6in]{geometry}
\usepackage{amssymb,amsthm, times}
\usepackage{delarray,verbatim}
\usepackage{ifpdf}
\ifpdf
\usepackage[pdftex]{graphicx}
\DeclareGraphicsRule{*}{mps}{*}{} \else
\usepackage[dvips]{graphicx}
\DeclareGraphicsRule{*}{eps}{*}{} \fi

\usepackage{bm}

\linespread{1.20}

\usepackage{ifpdf}
\usepackage{color,soul}
\usepackage[bordercolor=white,backgroundcolor=gray!30,linecolor=black,colorinlistoftodos]{todonotes}

\usepackage{xcolor}
\usepackage{mdframed}
{\begin{mdframed}[backgroundcolor=yellow]\begin{remark}}%
		{\end{remark}\end{mdframed}}
\definecolor{webgreen}{rgb}{0,.5,0}
\definecolor{webbrown}{rgb}{.8,0,0}
\definecolor{emphcolor}{rgb}{0.95,0.95,0.95}
%\hl{foo}
%\usepackage[pagebackref]{hyperref}
%\usepackage[dvipdfmx]{hyperref}
\usepackage{hyperref}
\hypersetup{%
%          draft,   %to suppress all hypertext options
          colorlinks=true,
          linkcolor=webbrown,
          filecolor=webbrown,
          citecolor=webgreen,
          breaklinks=true}
\ifpdf \hypersetup{pdftex,
%             pdftitle={Decision Making with Poisson process},
%             pdfauthor={Semih Sezer},
            pdfstartview=FitH, %%Fit, FitB, FitH
            bookmarksopen=true,
            bookmarksnumbered=true
} \else \hypersetup{dvips} \fi

\linespread{1.2}

\newcommand {\ud}{{\rm d}}

\usepackage{color}

\numberwithin{equation}{section}

\newtheorem{theorem}{Theorem}[section]
\newtheorem{proposition}{Proposition}[section]
\newtheorem{corollary}{Corollary}[section]
\newtheorem{remark}{Remark}[section]
\newtheorem{lemma}{Lemma}[section]

\newtheorem{assump}{Assumption}[section]

\numberwithin{remark}{section} \numberwithin{proposition}{section}
\numberwithin{corollary}{section}
\newcommand {\R}{\mathbb{R}}

\newcommand {\p}{\mathbb{P}}

\newcommand {\E}{\mathbb{E}}

\newcommand{\diff}{{\rm d}}

\newcommand{\lev}{L\'{e}vy }

\newcommand{\e}{\mathbb{E}}

\title{Optimality of multi-refraction  dividend strategies in the dual model}
\thanks{I. Czarna is partially supported by the National Science Centre Grant No. 2015/19/D/ST1/01182. %T. Rolski is partially supported by the National Science Centre Grant No. 2015/17/B/ST1/01102. 
	J. L. P\'erez  is  supported  by  CONACYT,  
project no.\ 241195. K. Yamazaki is supported by MEXT KAKENHI grant no.\ 17K05377.
 }
\date{\today.  }

\author[I. Czarna]{Irmina Czarna$^*$}
\address{$*$\,Faculty of Pure and Applied Mathematics, Wroc\l aw University
of Science and Technology, ul. Wybrze\.ze Wyspia\'nskiego 27, 50-370 Wroc\l aw, Poland. Email: irmina.czarna@pwr.edu.pl}
\author[J. L. P\'erez]{Jos\'e-Luis P\'erez$^{**}$}
%\address[J. L. P\'erez]{.}
%\email{ }
\address{$**$\, Department of Probability and Statistics, Centro de Investigaci\'on en Matem\'aticas A.C. Calle Jalisco s/n. C.P. 36240, 
Guanajuato, Mexico. Email: jluis.garmendia@cimat.mx }
%\author[T. Rolski]{Tomasz Rolski$^{\dag}$}
%\address{$\dag$\,Mathematical Institute, University of Wroc\l aw, pl. Grunwaldzki 2/4, 50-384 Wroc\l aw, Poland. E-mail: tomasz.rolski@gmail.com}
\author[K. Yamazaki]{Kazutoshi Yamazaki$^{\dag \dag}$}
%\address[K. Yamazaki]{}
\address{$\dag \dag$\, Department of Mathematics,
Faculty of Engineering Science, Kansai University, 3-3-35 Yamate-cho, Suita-shi, Osaka 564-8680, Japan. Email: kyamazak@kansai-u.ac.jp   }

\begin{document}
	\maketitle

\begin{abstract} %\red{[added this]}
We consider the \emph{multi-refraction strategies} in two equivalent versions of the optimal dividend problem in the dual (spectrally positive L\'evy) model. The first problem is a variant of the bail-out case where both dividend payments and capital injections must be absolutely continuous with respect to the Lebesgue measure. The second is an extension of Avanzi et al.\ \cite{APWY} where a strategy is a combination of two absolutely continuous dividend payments with different upper bounds and  different transaction costs.  In both problems, it is shown to be optimal to refract the process at two thresholds, with the optimally controlled process being the \emph{multi-refracted \lev process} recently studied by Czarna et al.\ \cite{CPRY}.  The optimal strategy and the value function are succinctly written in terms of a version of the scale function.
 Numerical results are also given.
\\
\noindent \small{\noindent  AMS 2010 Subject Classifications: 60G51, 93E20, 91B30 \\ %C44, C61, G24, G32, G35 \\
JEL Classifications: C44, C61, G24, G32, G35 \\
\textbf{Keywords:} dividends; capital injection; \lev processes; scale functions; dual 
model
}\\
\end{abstract}

\section{Introduction}

In this paper, we study two equivalent optimal dividend problems. The first problem is an extension of the bail-out model where capital can be injected so as to reduce the risk of ruin. Contrary to the classical case, 
both dividend and capital injection strategies must be absolutely continuous with respect to the Lebesgue measure, with their densities required to be bounded by some fixed constants. Because of the restriction on capital injections, ruin may occur.
The second problem pursues an optimal pair of two absolutely continuous dividend strategies with different upper bounds on the densities and with different proportional transaction costs. 
By incorporating the terminal payoff/cost at ruin, these two problems can easily be shown to be equivalent.

Regarding the first problem, the classical bail-out problem has been solved by Avram et al.\ \cite{APP2007} and Bayraktar et al.\ \cite{BKY} for the spectrally negative and positive (dual) models, respectively. 
Without the absolutely continuous assumption on the dividend and capital injection strategies, it is optimal to \emph{reflect} from below at zero and from above at some suitable barrier -- the resulting surplus process becomes the doubly reflected 
\lev process of Pistorius \cite{P2003}.  Recently, the case the dividend (and not the capital injection) strategy is absolutely continuous has been solved by P\'erez et al. \cite{PYY} and  P\'erez and Yamazaki \cite{YP_RR_dual}, 
for the spectrally negative and positive models, respectively. They showed that it is optimal to reflect from below at zero and refract at a suitably chosen upper barrier, with the resulting process being the refracted-reflected \lev process.

On the other hand, the second problem can be viewed as an extension to Avanzi et al.\ \cite{APWY}, where they considered the case a dividend 
strategy is composed of a singular and absolutely continuous parts with different associated transaction costs. 
They showed the optimality of a strategy that refracts the process at the lower threshold and reflects it at the upper barrier, similarly to the case in \cite{YP_RR_dual}.

The objective of this current paper is to show, for both problems, the optimality of what we call the \emph{multi-refraction strategies}, 
that refract the underlying process at two different thresholds, say $a^* \leq b^*$, to be suitably chosen. 
More precisely, for the first problem, we aim to show that it is optimal to pay dividends at the maximum possible rate whenever the surplus is above $b^*$ while injecting capital at the maximum rate whenever it is below $a^*$.  For the second problem, it is optimal to activate fully one of the absolutely continuous strategies whenever the surplus is above $a^*$ while activating fully the other one as well if it is above $b^*$.

We focus on the dual model, where the underlying (uncontrolled) surplus process follows a spectrally positive \lev process. 
See Avanzi et al.\ \cite{AGS2007,AGS2008} for a more detailed motivation of this model.  

Recently, Czarna et al.\ \cite{CPRY} studied a generalization of the refracted \lev process of Kyprianou and Loeffen \cite{KL} with multiple refraction thresholds.  
%Using their results, the expected net present value (NPV) of dividends/capital injections for our problems can be computed efficiently using the generalization of the scale function.  
Under a multi-refraction strategy, the resulting surplus process becomes precisely the one studied in \cite{CPRY}, and hence 
the fluctuation identities obtained there can be directly used.  The expected net present value (NPV) of dividends/capital injections for our problems can be written efficiently using the generalization of the scale function.

%This paper uses their results to solve the two problems explicitly for the dual model, where the underlying (uncontrolled) surplus process follows a spectrally positive \lev process.

In order to solve the problems, we take the following steps:
\begin{enumerate}
%\item The first step is to compute the expected NPV of (both periodically and continuously collected)  dividends under the hybrid barrier strategy for any choice of $a$ and $b$.   The controlled surplus process under the hybrid barrier strategy is the dual of the \emph{Parisian reflected process} studied in Avram et al.\ \cite{APY} with additional classical reflection (due to continuous barrier strategy).  We shall extend their results and obtain semi-analytical expressions of the expected NPV of dividends via the scale function.

\item Focusing on the set of multi-refraction strategies, we shall first select the two refraction thresholds $a^*$ and $b^*$, so that the corresponding candidate value function, say $v_{a^*, b^*}$, becomes \emph{smoother} at these thresholds. 
 More precisely, we choose these so that it is continuously differentiable (resp.\ twice continuously differentiable) for the case the underlying process has paths of bounded (resp.\ unbounded) variation.  
These conditions at  $a^*$ and $b^*$ give us two equations, which we call $\mathbf{C}_a$ and $\mathbf{C}_b$, respectively, in this paper.  We select $(a^*, b^*)$ so that $\mathbf{C}_a$ and $\mathbf{C}_b$ hold if $a^* > 0$ and $b^* > 0$, respectively.
%Using these, we select  (i) $a^* = b^*=0$, (ii) $0 =a^* < b^*$ or (iii) $0 < a^* < b^*$ where $\mathbf{C}_b$ and $\mathbf{C}_a$ hold if $b^* > 0$ and $a^* > 0$, respectively.

%Under the assumption that the driving process drifts to infinity, we shall show that
% there exists a pair $0 < a^* < b^*$ or $0 = a^* < b^*$ corresponding to the case of a hybrid barrier strategy, or otherwise we choose a pure periodic strategy (i.e. \ $a^* = b^*$) or a pure continuous barrier strategy (i.e.\ $b^* = \infty$).  For the case $0 < a^* < b^*$, $\mathbf{C}_b$ and $\mathbf{C}_a$ are simultaneously satisfied, while for the case $0 = a^* < b^*$, $\mathbf{C}_b$ and a weaker version of $\mathbf{C}_a$ (with the equality replaced with an inequality) are satisfied.  The cases $a^* = b^*$ and $b^* = \infty$ are treated separately.

%We shall show that there exist a pair that satisfy these simultaneously if $b^* > a^* > 0$.  Depending on the parameters, $b^* = \infty$, $a^* = 0$ or $b^* = a^*$ can happen; these cases need to be treated separately.

\item We then show the optimality of the selected multi-refraction strategy. Toward this end, we obtain sufficient conditions for optimality (verification lemma) and show that 
 the candidate value function $v_{a^*, b^*}$ indeed satisfies these.

%To this end, we extend the results of Avram et al.\ \cite{APY} that consider the spectrally negative \lev process with Parisian reflection above. 
\end{enumerate}

%We provide the link by showing the convergence with respect to the transaction cost rate and the arrival rate $r$ of the periodic decision times

% depending on the choice of the parameters of the problem, a pure continuous barrier strategy and a pure periodic barrier strategy can be optimal. We shall show for these cases separately, and provide the link by showing the convergence with respect to the transaction cost rate and the arrival rate $r$ of the periodic decision times

We also conduct numerical experiments so as to confirm the obtained analytical results.  We use  the phase-type \lev process of \cite{Asmussen_2004} whose scale function admits the form of a linear combination of (complex) exponentials (see \cite{Egami_Yamazaki_2010_2}), and hence the optimal thresholds $a^*$ and $b^*$ and the value function can be computed instantaneously.  We illustrate the computation procedures and also confirm the optimality of the multi-refraction strategies.  Additionally, we analyze and confirm the convergence to other existing versions of the optimal dividend problems.

The rest of the paper is organized as follows. Section \ref{section_preliminary} formulates 
the two problems we study in this paper. %Section \ref{section_verificaiton_lemma} obtains the verification lemma (sufficient conditions for optimality). 
%\green{In Section \ref{section_multi_refraction_strategies}} we define the multi-refraction strategy and compute the expected NPV under this strategy. \blue{[In order to keep repeating "In Section" how about "
We define the multi-refraction strategy and compute the expected NPV under this strategy in Section \ref{section_multi_refraction_strategies}.
In Section \ref{section_smooth_fit}, we choose candidate optimal thresholds
%the candidate \green{ for } optimal thresholds \blue{[I think "candidate optimal thresholds" sounds better but if we want the proposed redaction then it I think it should be "the candidates for the optimal thresholds"]} 
$(a^*,b^*)$, whose optimality is confirmed in Section \ref{section_optimality}.
  We conclude the paper with numerical results in Section \ref{numerical_section}.  Some proofs are deferred to the Appendix.

\section{Mathematical models} \label{section_preliminary}

In this section, we define the two problems considered in this paper.

\subsection{Problem 1 (bail-out model with absolutely continuous assumptions)} \label{section_dividend}
%\red{Let's move this section to the beginning of the paper (together with the definition of spectrally positive \lev process $Y$)?}

Let $Y=(Y(t); t\geq 0)$ be a L\'evy process defined on a  probability space $(\Omega, \mathcal{F}, \p)$, 
modeling the surplus of a company in the absence of control. We assume that it is \emph{spectrally positive} 
%\blue{process} \green{[process should not be here. This should be adjective.]} %\green{[I agree, delete "process"?]} 
or equivalently it has no negative jumps and is not a subordinator.
%Accordingly, we shall write $\e_x$ and $\e$ for the associated expectation operators. 
%In this paper, we shall assume throughout that $Y$ is \textit{spectrally positive}, 
%meaning here that it has no negative jumps and that it is not a subordinator. 
For $x\in \R$, we denote by $\p_x$ the law of $Y$ 
when it starts at $x$ and write for convenience  $\p$ in place of $\p_0$. 

An admissible strategy  is a pair $\pi := \left(L^{\pi}(t), R^{\pi}(t); t \geq 0 \right)$ of nondecreasing, right-continuous, 
and adapted processes (with respect to the filtration generated by $Y$) such that $L^{\pi}(0) = R^{\pi}(0) = 0$ where $L^{\pi}$ 
is the cumulative amount of dividends and $R^{\pi}$ is that of injected capital. In addition,  for $\delta_1,\delta_2 > 0$ fixed, we require that $L^\pi$ and $R^\pi$ are absolutely continuous with respect to the Lebesgue measure of the forms $L^\pi(t) = \int_0^t \ell^{\pi}(s) \diff s$ and $R^\pi(t) = \int_0^t r^{\pi}(s) \diff s$, $t \geq 0$, with $\ell^{\pi}$ and $r^{\pi}$ restricted to take values in, respectively, $[0,\delta_1]$ and $[0,\delta_2]$ uniformly in time.
We will denote by $V^{\pi}$ the controlled surplus process associated with the strategy $\pi$:
\begin{align}
V^{\pi}(t)=Y(t)-L^{\pi}(t)+R^{\pi}(t), \quad t \geq 0. \label{V_pi_decomp}
\end{align}

Assuming that $\beta > 1$ is the cost per unit injected capital, $\rho \in \R$ the terminal  payoff (if $\rho\geq  0$)/penalty (if $\rho\leq  0$) 
%\blue{For me weak inequalities were OK. Now it looks like the case $\rho=0$ is excluded?} \red{OK. This should be weak inequalities.}
%payoff/penalty \green{($\rho\geq 0/\rho\leq 0$)} \blue{[I found it at first a bit confusing, how about "payoff ($\rho\geq 0$)/penalty ($\rho\leq 0$)?]} 
at ruin, and $q > 0$ the discount factor, we want to maximize
\begin{align}
	v_{\pi} (x) := \mathbb{E}_x \left( \int_0^{\kappa_0^{\pi}} e^{-q t} \ell^{\pi}(t)  
	\diff t - \beta \int_0^{\kappa_0^{\pi}} e^{-q t} r^{\pi}(t) \diff t + \rho e^{- q \kappa_0^{\pi}}  \right), \quad x \geq 0, \label{v_pi_1}
\end{align}
where
\begin{align}
\kappa_0^{\pi}:=\inf\{t>0: V^{\pi}(t)<0\}. \label{kappa_0_pi}
\end{align}
%\red{JL: We are solving the case it is killed at the first exit time below zero, right?}
Hence the problem is to compute
\begin{equation*}
	v(x):=\sup_{\pi \in \mathcal{A}}v_{\pi}(x), \quad x \geq 0,
\end{equation*}
where $\mathcal{A}$ is the set of all admissible strategies  that satisfy the constraints described above.

For this problem, we assume the following. Note that, without this condition, one can always avoid ruin by injecting enough capital.
% so that ruin may not always avoidable by injecting enough capital.
%
%We assume throughout the paper the following to ensure that $X_2$ is also a spectrally negative \lev process (not a subordinator).
\begin{assump} \label{assump_subordinator} We assume that process $( Y(t) + \delta_2 t; t \geq 0 )$ is not a subordinator.

\end{assump}

%\red{[JL: I guess it is solvable when this is violated.  In this case, one can completely reflect the process.  And so, I guess the optimal strategy is either to reflect at zero or let it go default (when $\rho$ is high). We know how to compute these NPVs and so we only need to show that it satisfies the variational inequalities?]}
%\blue{[Kazu: I am sorry I do not understand, if we reflect on the lower boundary the strategy will still be absolutely continuous? If not we need another verification lemma, in this case we have a refracted-reflected L\'evy process?]} \red{[Actually, verification gets ugly when we take $n \uparrow \infty$ for $T_n$ because we do not know if $T_n \rightarrow T_0$ depending on arbitrary strategy $\pi$.  Let's ignore this for now. This is a very special case anyways.]}

\subsection{Problem 2} \label{subsection_problem2} We consider a variant of the problem studied 
%\green{[How about "studied" instead of "introduced", because I am not sure Avanzi et al. were the first ones to work on this problem?]} %\blue{introduced [consider was used just before in this sentence]} \red{ok} 
in  Avanzi et al.\ \cite{APWY}.  
Let $\tilde{Y}$ be a spectrally positive \lev process, and, as in Problem 1, let $\E_x$ be the expectation under which $\tilde{Y}(0) = x$.

A strategy  is a pair of two dividend strategies $\vartheta := (L^{\vartheta}(t) =  \int_0^t \ell^{\vartheta}(s) \diff s, \tilde{R}^{\vartheta}(t) = \int_0^t \tilde{r}^{\vartheta}(s) \diff s; t \geq 0 )$ 
that satisfy the same properties required for $\mathcal{A}$ of Problem 1, with the corresponding controlled surplus process
\[
\tilde{V}^{\vartheta}(t)=\tilde{Y}(t)-L^{\vartheta}(t)- \tilde{R}^{\vartheta}(t), \quad t \geq 0.
\] 
  
For $\tilde{R}^\vartheta$, the unit dividend rate is $\beta > 1$ while, for $L^\vartheta$, proportional transaction costs are incurred and the unit dividend rate is $1$.
%We will denote by $\tilde{V}^{\vartheta}$ the controlled \red{surplus} process associated with the strategy $\vartheta$:
%\[
%\tilde{V}^{\vartheta}(t)=Y(t)-L^{\vartheta}(t)- \tilde{R}^{\vartheta}(t), \quad t \geq 0.
%\]
% of nondecreasing, right-continuous, and adapted processes (with respect to the filtration generated by $\tilde{Y}$) such that $L^{\vartheta}(0-) = \tilde{R}^{\vartheta}(0-) = 0$ where, with $\delta_1,\delta_2 > 0$ fixed, we require that $L^\vartheta$ and $\tilde{R}^\vartheta$ are absolutely continuous with respect to the Lebesgue measure of the form $L^\vartheta(t) = \int_0^t \ell^{\vartheta}(s) \diff s$, $t \geq 0$, with $\ell^{\pi}$ restricted to take values in $[0,\delta_1]$ uniformly in time and $\tilde{R}^\vartheta(t) = \int_0^t \tilde{r}^{\vartheta}(s) \diff s$, $t \geq 0$, with $\tilde{r}^{\vartheta}$ restricted to take values in $[0,\delta_2]$ uniformly in time.
With $\tilde{\rho} \in \R$ the terminal payoff/penalty at ruin, % and the controlled surplus process given by
one wants to maximize
\begin{align}
\tilde{v}_{\vartheta} (x) := \mathbb{E}_x \left(   \int_0^{\tilde{\kappa}_0^{\vartheta}} e^{-q t} \ell^{\vartheta}(t) \diff t + \beta \int_0^{\tilde{\kappa}_0^{\vartheta}} e^{-q t} \tilde{r}^{\vartheta}(t) \diff t  + \tilde{\rho} e^{-q \tilde{\kappa}_0^{\vartheta}}    \right), \quad x \geq 0, \label{v_tilde}
\end{align}
where
%\begin{align*}
%U_t^\vartheta &:= \tilde{Y}_t - S_t^\vartheta + R_t^\vartheta, \quad t \geq 0, \\
$\tilde{\kappa}_0^{\vartheta} := \inf \{ t > 0: \tilde{V}^\vartheta(t) < 0 \}$.
%\end{align*}
%
%We want to maximize over all admissible strategies $\pi$ the total expected dividends  until ruin less transaction costs and terminal payoff  $\tilde{\rho}\in \R$ (more precisely a penalty if negative): with $q > 0$,
%\begin{align}\label{E_Obj1}
%V_{\pi} (x) := \mathbb{E}_x \left( \beta_A \int_0^{\sigma^\pi} e^{-q t} a_t^\pi  \diff t + \beta_S \int_{[0, \sigma^\pi]} e^{-q t} \diff S_t^{\pi} + \tilde{\rho} e^{-q \sigma^\pi}\right), \quad x \geq 0,
%\end{align}
%where
%\begin{align*} 
%\sigma^\pi &:= \inf \{ t > 0: U_t^\pi < 0 \},
%\end{align*}
%is the time to ruin. Here and throughout the paper, let $\inf \varnothing = \infty$.
%
%
%The underlying surplus process is again a spectrally positive \lev process $\hat{Y}$.   The strategy is given by a pair $\vartheta = (L^\vartheta, S^\vartheta)$ where $L^{\vartheta}$ models the capital injection while $S^\vartheta$ models the (usual) dividend.  We require that $L^\vartheta$ (as in $A^\pi$ above) is absolutely continuous with respect to the Lebesgue measure of the form $L_t^\vartheta = \int_0^t l^\vartheta_s \diff s$, $t \geq 0$, with $l^\vartheta$ restricted to take values in $[0,\delta]$ uniformly in time. %\KYtemp{[JL: I changed from $\hat{U}$ because there is no modification needed for this process.]}
% This process $L^{\vartheta}$ is absolutely continuous and the cash flow is terminated at ruin.  
%The controlled surplus process is then $U_t^\vartheta = \hat{Y}_t - S_t^\vartheta + \hat{L}_t^{\vartheta}$, $t \geq 0$.
%

This can be easily transformed to Problem 1.  To see this,  setting the process $Y$ and a strategy $\pi = (\int_0^t  \ell^\pi(s)\diff s, \int_0^t   r^\pi(s) \diff s; t \geq 0) \in 
\mathcal{A}$ with
\begin{align*}
Y(t)  := \tilde{Y}(t) - \delta_2 t, \quad  r^\pi(t):= \delta_2 - \tilde{r}^\vartheta(t) \quad \textrm{and} \quad  \ell^\pi(t):=\ell^\vartheta(t), \quad t \geq 0,
\end{align*}
%Then, by setting a strategy $\pi = (\int_0^t  \ell^\pi(s)\diff s, \int_0^t   r^\pi(s) \diff s; t \geq 0) \in 
%\mathcal{A}$, 
we have  $\tilde{V}^\vartheta(t) = (Y(t) + \delta_2 t) - \int_0^t \ell^\pi(s) \diff s + 
(\int_0^t r^\pi(s) \diff s - \delta_2 t) = V^\pi(t) $, $t \geq 0$ (as in \eqref{V_pi_decomp}), and hence $\tilde{\kappa}_0^\vartheta = \kappa_0^\pi$ 
(as in \eqref{kappa_0_pi}).
Now for all $x \geq 0$, with $\rho := \tilde{\rho} - \beta \delta_2 / q$, \eqref{v_tilde} becomes %\red{[deleted the first equality below.]}
\begin{align} \label{problem_2_equivalent}
\begin{split}
\tilde{v}_{\vartheta} (x) 
%&= \mathbb{E}_x \left(   \int_0^{\tilde{\kappa}_0^{\vartheta}} e^{-q t} \ell^{\vartheta}(t)  
%\diff t + \beta \int_0^{\tilde{\kappa}_0^{\vartheta}} e^{-q t}   \tilde{r}^\vartheta(t) \diff t + \tilde{\rho}
%e^{-q \tilde{\kappa}_0^{\vartheta}} \right)   \\
&= \mathbb{E}_x \left(   \int_0^{\kappa_0^{\pi}} e^{-q t} \ell^{\pi}(t)  
\diff t - \beta \int_0^{\kappa_0^{\pi}} e^{-q t} r^\pi(t) \diff t + \rho e^{-q \kappa_0^{\pi}} \right) + 
 \frac {\beta \delta_2} q.
\end{split} 
\end{align}
%\begin{align} \label{problem_2_equivalent}
%\begin{split}
%\tilde{v}_{\vartheta} (x) 
%&= \mathbb{E}_x \left(   \int_0^{\tilde{\kappa}_0^{\vartheta}} e^{-q t} \ell^{\vartheta}(t)  
%\diff t + \beta \int_0^{\tilde{\kappa}_0^{\vartheta}} e^{-q t}   \tilde{r}^\vartheta(t) \diff t  -
%\beta \delta_2 \int_0^{\tilde{\kappa}_0^{\vartheta}} e^{-q t}  \diff t + \rho
%e^{-q \tilde{\kappa}_0^{\vartheta}} \right) +   \frac {\beta \delta_2} q \\
%&= \mathbb{E}_x \left(   \int_0^{\tilde{\kappa}_0^{\vartheta}} e^{-q t} \ell^{\vartheta}(t)  
%\diff t - \beta \int_0^{\tilde{\kappa}_0^{\vartheta}} e^{-q t} r^\vartheta(t) \diff t + \rho %e^{-q \tilde{\kappa}_0^{\vartheta}} \right) + 
% \frac {\beta \delta_2} q.
%\end{split} 
%\end{align}
In other words, the problem reduces to maximizing \eqref{v_pi_1}.  

\vspace{0.5cm}
Because Problems 1 and 2 are equivalent, for the rest of the paper except for the numerical results given in Section \ref{numerical_section}, we focus on Problem 1.

\section{Multi-refraction strategies} \label{section_multi_refraction_strategies}

%\red{[JL: I combined two sections that talk about the threshold strategy and its NPV.]}

%\subsection{Multi-refraction strategies} \red{[JL: and simply delete this subtitle?]}

Our objective of this paper is to show the optimality of the multi-refraction strategy, say $\pi^{a,b}$, with suitable refraction levels $0 \leq  a\leq b$.

%Let us assume that we have a spectrally positive L\'evy process $Y$, and we define two thresholds 
Fix $0 \leq  a\leq b$. Under $\pi^{a,b}$, dividends are paid at the maximal possible rate $\delta_1$ whenever the 
surplus is above $b$ while capital is injected at the maximal possible rate $\delta_2$ whenever it is below $a$. 
In this case the aggregate process $V^{a,b}$ is given by 
%\green{
%In my opinion if we want to agree with our multi-refracted paper we should take such
  %inequalities. If we put in every indicator only inequalities $<$ or $>$ what for $a=b$ ?  }
%\blue{We we have that $\mathbb{P}_x(V^{a,b}(s)=b)=0$ for all $t>0$ Lebesgue-a.s., by the proof of Lemma 2.1 in \cite{KL}, that we used in the multirefracted paper. Then
%\begin{align}\label{comment1}
%\E_x\left[\int_0^t1_{\{V^{a,b}(s)\geq b\}} \diff s-\int_0^t1_{\{V^{a,b}(s)> b\}} \diff s\right]=\int_0^t\mathbb{P}_x(V^{a,b}(s)=b)ds=0.
%\end{align}	
%So I guess it is not that important?
	%} \red{I am ok with both. I guess what is currently written in (4.2) is ok?  If the referee asks, we can add a short remark.}
\begin{align*}
V^{a,b}(t)=Y(t)-\delta_1\int_0^t1_{\{V^{a,b}(s)\geq b\}} \diff s+\delta_2\int_0^t1_{\{V^{a,b}(s) < a\}} \diff s\qquad\text{ $t\geq 0$.}
\end{align*}
%\blue{[I think this inequality should be $\delta_2\int_0^t1_{\{V^{a,b}(s)<a\}} \diff s$ so that when we multiply by $-1$ we get $\delta_2\int_0^t1_{\{-V^{a,b}(s)>-a\}} \diff s$ below]}
% \red{[I am ok with both.  We probably should not spell this out too much. ]}
%\green{ Please look at my comment above and  now  also for me everything works in the equation (4.1).} 
In order to see this is a well-defined process, note that, for all $t \geq 0$, %\red{[ok to delete the first equality below?]}
%\green{[Deleted first line of previous version.]}
\begin{align} \label{V_a_b_SDE}
\begin{split}
-V^{a,b}(t)%&=-Y(t)+\delta_1\int_0^t1_{\{-V^{a,b}(s)\leq -b\}} \diff s-\delta_2\int_0^t1_{\{-V^{a,b}(s)>-a\}} \diff s\\
%&=-Y(t)+\delta_1t-\delta_1\int_0^t1_{\{-V^{a,b}(s)> -b\}} \diff s-\delta_2\int_0^t1_{\{-V^{a,b}(s)\geq -a\}}\diff s\\
=X_0(t)-\delta_1\int_0^t1_{\{-V^{a,b}(s)> -b\}} \diff s-\delta_2\int_0^t 1_{\{-V^{a,b}(s)> -a\}} \diff s,
\end{split}
\end{align}
where 
\begin{align}
X_0(t):=-Y(t)+\delta_1 t, \quad t \geq 0. \label{X_def}
\end{align}
%which is a spectrally negative \lev process (and not a subordinator). 
In addition, for $t \geq 0$, let % $X_0(t) = X(t)$ and define also
\begin{align} \label{X_k_def}
\begin{split}
%X_0(t) & := X(t), \\
X_1(t) &:= X_0(t) - \delta_1 t = - Y(t), \\
X_2(t) &:= X_0(t) - (\delta_1+\delta_2) t =  - Y(t) - \delta_2 t,
\end{split}
\end{align}
which by Assumption \ref{assump_subordinator} are spectrally negative \lev processes (that are not the negative of subordinators).
%It is clear that $X_1$ is a spectrally negative \lev process.  
%\begin{remark}
%Suppose $Y$ is of bounded variation and $c_Y - \delta_2 > 0$.
%
%Let $\mathcal{A}_{\delta_1, \delta_2}$ be the set of admissible strategies. Clearly, $\mathcal{A}_{\delta_1, \delta_2} \subset \mathcal{A}_{\delta_1, \infty}$ where the latter is for the case there is no restriction on capital injections. \red{[under construction]}
%\end{remark}
\par The process $-V^{a,b}$ is a spectrally negative multi-refracted L\'evy process of \cite{CPRY} driven by the process $X_0$ with refraction 
thresholds $-b \leq -a \leq 0$, which is a unique strong solution to  \eqref{V_a_b_SDE}. 
It behaves like $X_0$ on $(-\infty, -b)$, like $X_1$ on $(-b,-a)$, and like $X_2$ on $(-a,\infty)$. 
%\red{[probably safer to exclude the boundaries.]}
%\green{ If we define our process $V^{a,b}(t)$ as I proposed, should we exclude the boundaries?} 
%\blue{[By \eqref{comment1}, I think we should. I think its just an intuitive description of the process, so not important to enter in much details here?]} \red{[for now, let's just delete the boundaries.]}

%\red{Move the paragraph of \eqref{v_pi} here?}
 It is clear that the strategy $\pi_{a,b}$ is admissible and its expected NPV of the total payoff is given by 
\begin{align} \label{v_pi}
	v_{a,b}(x) := \mathbb{E}_x \left( \int_0^{\kappa_0^{a,b}} e^{-q t} \diff L^{a,b}(t) - \beta \int_0^{\kappa_0^{a,b}} e^{-q t} \diff R^{a,b}(t)+ \rho e^{- q \kappa_0^{a,b}} \right), \quad x \geq 0,
\end{align}
where $\diff L^{a,b}(t)=\delta_11_{\{V^{a,b}(t) \geq b\}} \diff t$ and $\diff R^{a,b}(t)=\delta_21_{\{V^{a,b}(t)< a\}}\diff t$ for $t>0$, and
\[
\kappa_0^{a,b}:=\inf\{t>0: V^{a,b}(t)<0\}.
\]

%In order to show the optimality of the multithreshold strategy we first compute its net present value of dividends. Let us define
%	\begin{align*}
%	\kappa_0^-=\inf\{t>0:V^{(a,b)}_t<0\}.
%	\end{align*}
%\red{[defined $f$ and the explicit version of $g_{a,b}$ here]}

%By differentiating, \blue{[Are the following expressions used? Remove?]}
%\red{\begin{align*}
%	v_{a,b}'(x)
%					&= A g_{a,b}^{(q) \prime}(x) +\frac{\delta_1}{\left(1-\delta_1 W^{(q)}(0)\right)} \frac \partial {\partial x}\int_{-b}^{-a}w_2^{(q)}(-x;y)\diff y   -(\delta_1+\beta\delta_2) W^{(q)}_2 (a-x), \\
%						v_{a,b}''(x)
%					&= A g_{a,b}^{(q) \prime \prime}(x) +\frac{\delta_1}{\left(1-\delta_1 W^{(q)}(0)\right)} \frac {\partial^2} {\partial x^2}\int_{-b}^{-a}w_2^{(q)}(-x;y)dy   +(\delta_1+\beta\delta_2) W^{(q) \prime}_2 (a-x).
%		 \end{align*}}

\subsection{Scale functions}  \label{section_scale_functions}

Using the results in \cite{CPRY}, the expected NPV \eqref{v_pi} under the multi-refraction strategy can be written in terms of the scale functions $W_k^{(q)}$ and $Z_k^{(q)}$ of the spectrally negative \lev processes 
$X_k$  for $k=0,1,2$ defined in %\blue{[as in]} \blue{Maybe: defined instead of as in } \red{[agree with ``defined in"]} 
\eqref{X_def} and \eqref{X_k_def}. 

%\red{Below, probably not consistent. At least $\psi$ is not defined and it is not $\psi_Y$.}\blue{[Agreed.]}
%\green{How about:}
Fix $q > 0$. Define the Laplace exponent of $X_0$ by  $\psi_0:[0,\infty) \to \R$ such that
\[
{\rm e}^{\psi_0(\lambda)t}:=
%\e
%\big[{\rm e}^{-\lambda X(t)}\big]\blue{
\e\big[{\rm e}^{\lambda X_0 (t)}\big], \qquad t, \lambda\ge 0,
\]
with its  \emph{L\'evy-Khintchine representation}, 
\begin{equation}\label{lk}
	\psi_0(\lambda) = \gamma_0 \lambda+\frac{\sigma^2}{2}\lambda^2+\int_{(0, \infty)}\big({\rm e}^{-\lambda z}-1+\lambda z\mathbf{1}_{\{z<1\}}\big)\Pi(\ud z), \quad \lambda \geq 0,
\end{equation}
%\green{where $\psi$ is the Laplace exponent of process $X$, i.e. $\psi:[0,\infty) \to \R$
%\[
%{\rm e}^{\psi(\lambda)t}:=\e\big[{\rm e}^{-\lambda X(t)}\big]\blue{\e\big[{\rm e}^{\lambda X(t)}\big]?}, \qquad t, \lambda\ge 0.
%\]
where $\gamma_0 \in \R$, $\sigma\ge 0$, and $\Pi$ is a  measure on $(0, \infty)$ 
 that satisfies
\[
\int_{(0, \infty)}(1\land z^2)\Pi(\ud z)<\infty.
\]
We also define the Laplace exponents of $X_k$ for $k=1,2$ by
% and $\green{k = 1,2}$.  Define the Laplace exponent of $X_k$ by
\begin{align*}
\psi_k(\lambda) =  \psi_0(\lambda) - \sum_{i=1}^k \delta_i \lambda = \gamma_k \lambda+\frac{\sigma^2}{2}\lambda^2+\int_{(0, \infty)}\big({\rm e}^{-\lambda z}-1+\lambda z\mathbf{1}_{\{z<1\}}\big)\Pi(\ud z), \quad \lambda \geq 0,
\end{align*}
where $\gamma_k := \gamma_0 - \sum_{i=1}^k \delta_i $.

The processes $X_k$, for $k=0,1,2$, have paths of bounded variation if and only if $\sigma=0$ and $\int_{(0,1)} z\Pi(\mathrm{d} z) < \infty$; in this case, we can write %for each $k = 0,1,2$, $X_k$ can be written as
\begin{equation}
	X_k(t)=c_k t-S(t), \,\,\qquad t\geq 0, \quad k = 0,1,2,  \notag
	\label{BVSNLP}
\end{equation}
where 
\begin{align}
	c_k:=\gamma_k+\int_{(0,1)} z \Pi(\mathrm{d} z) \label{def_drift_finite_var}
\end{align}
and $(S(t); t\geq0)$ is a driftless subordinator.

\begin{remark} \label{assump_drift}For the case of bounded variation, Assumption \ref{assump_subordinator} is  equivalent to $c_2 = c_0 - \delta_1 - \delta_2 > 0$. 
\end{remark}

%\red{[delete this sentence because $Y \neq -X$?]} Note that  necessarily $c>0$, since we have ruled out the case that $X$ (equivalently $Y$) has monotone paths. 

%For $k=0,1,2$ define the right-inverse $\Phi_k \colon [0,\infty) \to [0,\infty)$ given by $\Phi_k(q) := \sup \{ \lambda \geq 0 : \psi_k(\lambda) = q\}$, for $q >0$.

%$$
%\psi ( \Phi(q) ) = q, \quad q \geq 0 .
%$$%We have that $\Phi(q)=0$ if and only if $q=0$ and $\psi'(0+)\geq0$.
Fix $q > 0$ and $k = 0,1,2$. The $q$-scale function of the process $X_k$ is defined as the continuous function on $[0,\infty)$ with its Laplace transform
\begin{equation}\label{def_scale}
\int_0^{\infty} \mathrm{e}^{- \lambda y} W^{(q)}_k (y) \mathrm{d}y = \frac{1}{\psi_k(\lambda) - q} , \quad \text{for $\lambda > \Phi_k(q)$}, 
\end{equation}
where $\Phi_k \colon [0,\infty) \to [0,\infty)$ is the right-inverse given by $\Phi_k(q) := \sup \{ \lambda \geq 0 : \psi_k(\lambda) = q\}$.
This function is unique, positive and strictly increasing  and is further continuous for $x\geq0$. We extend $W_k^{(q)}$ to the whole real line by setting $W_k^{(q)}(x)=0$ for $x<0$. %We write $W = W^{(0)}$ when $q=0$. \red{[I guess we do not need to talk about the case $q=0$?]}
We also define, for $x \in \R$, %\red{[added $\overline{W}$ below]}
\begin{align}\label{eq:zqscale}
\begin{split}
\overline{W}_k^{(q)}(x) &:= \int_0^x W^{(q)}_k (y)\mathrm d y, \\
Z^{(q)}_k(x) &:= 1 + q \overline{W}^{(q)}_k(x) .
\end{split}
\end{align}
These scale functions are related by the following equalities: for $k=1,2$,  
\begin{align}\label{RLqp}
	&\sum_{i=1}^k\delta_i \int_0^xW^{(q)}_k(x-y)  W_0^{(q)}(y) \ud y=\overline{W}_k^{(q)}(x)-\overline{W}_0^{(q)}(x), \quad  x \in \R \; \textrm{and} \; q >0,
\end{align}
which can be proven by showing that the Laplace transforms on both sides are equal.

Regarding their asymptotic values as $x \downarrow 0$, we have, as in Lemmas 3.1 and 3.2 of \cite{KKR}, 
\begin{align}\label{eq:Wqp0}
	\begin{split}
		W^{(q)}_k (0) &= \left\{ \begin{array}{ll} 0 & \textrm{if $X_k$ is of unbounded
				variation,} \\ 
				c_k^{-1}
				%(c - \sum_{i=1}^k \delta_i)^{-1} 
				& \textrm{if $X_k$ is of bounded variation,}
		\end{array} \right.  
	\end{split} \\ \label{W_zero_derivative}
	\begin{split}
		W^{(q)\prime}_k (0 +) &:= \lim_{x \downarrow 0} W^{(q)\prime}_k (x+) =
		\left\{ \begin{array}{ll}  \frac 2 {\sigma^2} & \textrm{if }\sigma > 0, \\
			\infty & \textrm{if }\sigma = 0 \; \textrm{and} \; \Pi(0, \infty) = \infty, \\
			\frac {q + \Pi(0, \infty)} {c_k
			%(c - \sum_{i=1}^k \delta_i)
			^2} &  \textrm{if }\sigma = 0 \; \textrm{and} \; \Pi(0, \infty) < \infty,
		\end{array} \right. 
	\end{split}
\end{align}
%\red{$c_Y$ need to be changed.}\blue{[Agreed.]}\green{Done.}
and, as in Lemma 3.3 of \cite{KKR}, 
\begin{align}
	\begin{split}
		e^{-\Phi_k(q) x}W^{(q)}_k (x) \nearrow \psi_{k}'(\Phi_k(q))^{-1} \quad \textrm{as } x \rightarrow \infty.
	\end{split}
	\label{W_q_limit}
\end{align}
%where in the case  $\psi_{X_k,+}'(0) = 0$, the right-hand side, when $q=0$,  is understood to be infinity. 
%\red{[can we just assume $q > 0$ in this section so that we do not need to talk about $q=0$?]}\blue{[Kazu: removed everything related to the case $q=0$.]}
 Here and for the rest of the paper, $g'(x+)$ and $g'(x-)$,  for any function $g$, are the right-hand and left-hand derivatives, respectively, at $x$. %\red{[JL: Can we write $g'(x+)$ and $g'(x-)$ instead? because subscripts are often used for something else?]}

%\red{Hi JL: I guess $Y_k$'s are confusing because it is not in order while $X_k$ are.  Why don't we just use
%}\blue{[Hi Kazu: Agreed]}
%And I guess, it is better to use $(t)$ rather than the subscript for $t$ throughout the paper.  What do you think?
%}

\begin{remark} \label{remark_smoothness} 
	It is known that the right-hand and left-hand derivatives of the scale function always exist for all $x > 0$.
	If $Y$ (and hence $X_k$ for $k=0,1,2$ as well) is of unbounded variation or the \lev measure is atomless, 
	it is known that $W_k^{(q)}$ is $C^1(\R \backslash \{0\})$ for $k = 0,1,2$. 
	For more comprehensive results on the smoothness, see \cite{Chan2011}.
\end{remark}

Finally, the following function will be important in the derivation of the results in the rest of the paper. For $x, b \in\mathbb{R}$%\red{[actually, better to define for $x, b \in \mathbb{R}$ negative cases I think appear in the appendix]}
, we define
 	\begin{align} \label{def_l_x_b}
 		l^{(q)}(x; b) &:=   \int^{b}_{x} e^{-\Phi_0(q) z} W_1^{(q)} (z-x) \diff z =   e^{-\Phi_0(q)b} \overline{W}^{(q)}_1(b-x) + \Phi_0(q) \int^{b}_{x} e^{-\Phi_0(q) z} \overline{W}_1^{(q)} (z-x) \diff z,
 	\end{align}
 	where the second equality holds by integration by parts.
%	\begin{remark}
		Differentiating \eqref{def_l_x_b}  with respect to $x$, we have, for $x \neq b$,
		\begin{align}\label{l_prime}
		l^{(q)\prime}(x; b) &= - \left( e^{-\Phi_0(q)b} W^{(q)}_1(b-x) + \Phi_0(q) l^{(q)}(x; b)\right)  \leq  0, \\
			%&= - \Big( e^{-\Phi(q)b} W^{(q)}_1(b-x) + \Phi(q) \int^{b}_{x} e^{-\Phi(q) z} W_1^{(q)} (z-x) \diff z \Big),
			%		\end{align}
			%		and
			%		\begin{align}
			l^{(q)\prime \prime}(x-; b) 
			&=   e^{-\Phi_0(q)b} W^{(q)\prime}_1((b-x)+) - \Phi_0(q)  l^{(q)\prime}(x;b). \label{l_prime_double_prime}
		\end{align}
		%\green{[the above $<$ should be $\leq$ to include the case $b < x$?]}%\green{[Agreed.]}
%	\end{remark}} 	

\subsection{Computation of \eqref{v_pi}} 

%\red{[JL: Should we change to $f^{(q)}$ to be consistent?]}
%\green{How about:\\}
Using the results obtained in \cite{CPRY}, we present the following explicit form of the function $v_{a,b}(x)$. 
 Its proof is deferred to Appendix \ref{appendix_proof_value_fun_explicit}.

\begin{lemma}\label{value_fun_explicit} For $0 \leq a \leq b$ and $x \geq 0$, we have
\begin{align*}
	v_{a,b}(x)
					&= f_{a,b}^{(q)}(x) - (f_{a,b}^{(q)}(0) - \rho) \frac {g_{a,b}^{(q)}(x)} {g_{a,b}^{(q)}(0)},
		 \end{align*}
		where %\green{[rewrote using $l$]}
\begin{align} \label{def_f}
	f_{a,b}^{(q)}(x) &:= \delta_1 \Big( \overline{W}_1^{(q)} (b-x)+ \delta_2  \int^{a}_{x} W_2^{(q)} (u-x)  W^{(q)}_1 (b-u)  \diff u \Big)  +\beta\delta_2 \overline{W}^{(q)}_2 (a-x) + \frac {\delta_1} q, \\
\label{u_simplified}
	\begin{split}
	g_{a,b}^{(q)}(x)
	&:= e^{-\Phi_0(q) x} +  \Phi_0(q) \Big[ \delta_1 l^{(q)}(x;b) 
	%\int^{b}_{x} e^{-\Phi(q) z} W_1^{(q)} (z-x) \diff z 
	+	  \delta_2  \int^{a}_{x} W_2^{(q)}(z-x)  \big( e^{-\Phi_0(q) z}-  \delta_1  l^{(q)\prime}(z;b) \big) \diff z \Big].
	\end{split}
\end{align} %\red{[simplified slightly as $\Phi(q)$ is a common factor.]}
\end{lemma}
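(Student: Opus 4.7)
My plan is to evaluate $v_{a,b}$ by direct substitution of the fluctuation identities obtained in \cite{CPRY}. The starting observation is that $U(t) := -V^{a,b}(t)$ is the (two-threshold) spectrally negative multi-refracted L\'evy process of \cite{CPRY} driven by $X_0$ with refraction thresholds $-b \leq -a \leq 0$, started at $U(0) = -x$; under this identification, the ruin time $\kappa_0^{a,b}$ coincides with the first upward passage time of $U$ past $0$.

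First I would split the integrand in \eqref{v_pi} and write
\[
v_{a,b}(x) = \delta_1 \ex\!\Big[\!\int_0^{\kappa_0^{a,b}}\!\! e^{-qt} 1_{\{V^{a,b}(t) \geq b\}} \diff t\Big] - \beta\delta_2\, \ex\!\Big[\!\int_0^{\kappa_0^{a,b}}\!\! e^{-qt} 1_{\{V^{a,b}(t) < a\}} \diff t\Big] + \rho\, \ex\!\bigl[e^{-q \kappa_0^{a,b}}\bigr].
\]
Rewritten on the $U$-side, the first two summands are discounted occupation-time integrals of $U$ over $(-\infty,-b]$ and $[-a,0)$, respectively, before the first upward passage past $0$, and the third is the Laplace transform of that passage time. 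Each of these three objects is computed explicitly in \cite{CPRY} for the two-threshold refracted process in terms of the scale functions $W_k^{(q)}, \overline{W}_k^{(q)}$ ($k=0,1,2$), the right-inverse $\Phi_0(q)$, and convolutions built from them.

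The next step is to insert these identities and regroup the result by separating the coefficient of $\rho$ from the rest, obtaining a representation of the form $A(x) + \rho\, B(x)$. The claim then amounts to verifying $B(x) = g_{a,b}^{(q)}(x)/g_{a,b}^{(q)}(0)$ and $A(x) = f_{a,b}^{(q)}(x) - f_{a,b}^{(q)}(0)\, B(x)$, with $f_{a,b}^{(q)}$ and $g_{a,b}^{(q)}$ as in \eqref{def_f}--\eqref{u_simplified}. This rearrangement relies on the scale-function relation \eqref{RLqp} to eliminate the $W_0^{(q)}$-convolutions that the CPRY identities produce but that are absent from the final compact expressions, together with \eqref{l_prime} and integration by parts to pass between $l^{(q)}(\cdot;b)$ and its representation as an integral against $e^{-\Phi_0(q)z}$. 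The boundary identity $v_{a,b}(0) = \rho$ is automatically built into the answer; this is consistent with the fact that, under Assumption \ref{assump_subordinator}, spectral positivity of $Y$ forces $\kappa_0^{a,b}=0$ $\p_0$-a.s.\ (equivalently, $c_2>0$ in the bounded variation case; cf.\ Remark \ref{assump_drift}).

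The main obstacle is organizational rather than conceptual: the CPRY identities in the two-threshold setting produce a long alternating sum of nested convolution integrals against $W_0^{(q)}, W_1^{(q)}$, and $W_2^{(q)}$, and the work lies in a disciplined use of \eqref{RLqp} to absorb all $W_0^{(q)}$-contributions into combinations of $W_1^{(q)}$ and $W_2^{(q)}$. A minor additional point is that the identities should yield a single formula valid uniformly over the three regions $x \leq a$, $a \leq x \leq b$, and $x \geq b$; this is handled by exploiting the convention $W_k^{(q)}(y) = 0$ for $y<0$ (and likewise for $\overline{W}_k^{(q)}$), so that terms from outside the relevant region contribute nothing.
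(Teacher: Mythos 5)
Your proposal follows essentially the same route as the paper's Appendix A proof: identify $-V^{a,b}$ with the CPRY multi-refracted process, invoke the CPRY first-passage Laplace transform and occupation-time identities for the three summands of \eqref{v_pi}, regroup by isolating the coefficient of $\rho$, and then use \eqref{RLqp} together with the vanishing convention $W_k^{(q)}(y)=0$ for $y<0$ to reduce the $W_0^{(q)}$-convolutions to the compact forms \eqref{def_f}--\eqref{u_simplified} (the paper packages this last step as the two auxiliary lemmas showing $u_2^{(q)}(-x)=g^{(q)}_{a,b}(x)$ and $\tilde f^{(q)}_{a,b}(-x)=f^{(q)}_{a,b}(x)$). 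The plan is correct, though as written it is an outline and the substance of the paper's proof lies in actually carrying out the convolution simplifications you defer.
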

%\blue{[Kazu: moved this here and made it a Remark. I think it looked strange between the Lemmas \ref{lemma_derivatives_f} and \ref{lemma_derivatives_g}. But I am ok if you want to put it back.]
\section{Selection of $(a^*, b^*)$}  \label{section_smooth_fit}%\red{[made this section]}}

In this section, using the smooth fit principle we choose the (candidate) optimal refraction thresholds, 
which we shall call $(a^*, b^*)$. 
In particular, we shall choose their values so that $v_{a^*, b^*}$ becomes continuously differentiable (resp.\ twice continuously differentiable) for the case $Y$ (and $X_k$, $k=0,1,2$) are  of bounded (resp.\ unbounded) variation.

%\red{[JL: The derivatives of $f$ and $g$ which are given in the proof of Lemma \ref{lemma_difference_at_a_b_f} probably can be used in the proof of Proposition \ref{convexity}. It is probably better to move these expressions here and as corollary we have Lemma \ref{lemma_difference_at_a_b_f}.  What do you think?]}

We first differentiate the identities given in Lemma \ref{value_fun_explicit}. 
%Their proofs are deferred to the Appendix. 
The next two lemmas follow by direct differentiation and integration by parts, and hence we omit the proofs.
\begin{lemma} \label{lemma_derivatives_f}
%\red{[simplified a bit]}
Fix $0\leq a\leq b$. For $x\in \R_+\backslash\{a,b\}$, 
	\begin{align*}
		f_{a,b}^{(q)\prime}(x)&= -  \delta_2  \Big( (\beta + \delta_1 W_1^{(q)}(b-a)) W_2^{(q)}(a-x) +  
		\delta_1 \int^{a}_{x} W_2^{(q)} (u-x)  W^{(q) \prime}_1 (b-u)  \diff u  \Big) 1_{\{ x < a\}} \notag\\&  -\delta_1  W_1^{(q)} (b-x), \\ %  - \beta\delta_2 W^{(q)}_2 (a-x),\notag\\
	f_{a,b}^{(q)\prime \prime}(x-) &=  \delta_2  \Big( (\beta + \delta_1 W_1^{(q)}(b-a)) W_2^{(q)\prime}((a-x)+)+ \delta_1 W_2^{(q)} (0)  W^{(q) \prime}_1 ((b-x)+) \\
	&\qquad \qquad +  \delta_1 \int^{a}_{x} W_2^{(q)\prime} (u-x)  W^{(q) \prime}_1 (b-u)  \diff u  \Big) 
	1_{\{ x <a \}}  + \delta_1  W_1^{(q) \prime} ((b-x)+). %+ \beta\delta_2 W^{(q)\prime}_2 (a-x).%\label{f_derivative_sec}
\end{align*}
%\red{[modified for the case $W$ is not differentiable.]}
%\blue{$$f_{a,b}^{(q)\prime \prime}(x-) =  \delta_2  \Big( (\beta + \delta_1 W_1^{(q)}(b-a)) W_2^{(q)\prime}((a-x)+)+
	 %\delta_1 \int^{a}_{x} W_2^{(q)\prime} (u-x)  W^{(q) \prime}_1 (b-u)  \diff u  \Big)1_{\{ x <a \}}  + \delta_1  W_1^{(q) \prime} ((b-x)+)  $$}
%\blue{Because for UBV case $W_2(0)=0$} \green{[we are not assuming UBV and so, we should not simplify?]}%\green{[I agree we should not simplify.]}
\end{lemma}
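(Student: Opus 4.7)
The plan is to differentiate the explicit formula for $f_{a,b}^{(q)}$ from Lemma 3.1 term by term, with the only nontrivial ingredient being an integration by parts applied to the double integral $\int_x^a W_2^{(q)}(u-x) W_1^{(q)}(b-u) \diff u$. The two terms $\delta_1 \overline{W}_1^{(q)}(b-x)$ and $\beta \delta_2 \overline{W}_2^{(q)}(a-x)$ differentiate immediately by the chain rule to $-\delta_1 W_1^{(q)}(b-x)$ and $-\beta \delta_2 W_2^{(q)}(a-x)$ respectively, the latter vanishing for $x > a$ since $W_2^{(q)}$ is supported on $[0, \infty)$. For the integral term, Leibniz's rule yields, for $x < a$,
\[
\frac{d}{dx} \int_x^a W_2^{(q)}(u-x) W_1^{(q)}(b-u) \diff u = - W_2^{(q)}(0) W_1^{(q)}(b-x) - \int_x^a W_2^{(q)\prime}(u-x) W_1^{(q)}(b-u) \diff u.
\]

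Next, I would integrate by parts on the surviving integral, viewing $W_2^{(q)\prime}(u-x)\diff u$ as the differential of $W_2^{(q)}(u-x)$, which produces the boundary terms $W_2^{(q)}(a-x) W_1^{(q)}(b-a) - W_2^{(q)}(0) W_1^{(q)}(b-x)$ together with the remainder $\int_x^a W_2^{(q)}(u-x) W_1^{(q)\prime}(b-u) \diff u$. Assembling, the two $W_2^{(q)}(0) W_1^{(q)}(b-x)$ contributions cancel, and the coefficient of $W_2^{(q)}(a-x)$ collects the $\beta$ coming from $\beta \delta_2 \overline{W}_2^{(q)}(a-x)$ together with $\delta_1 W_1^{(q)}(b-a)$ into the stated factor $\beta + \delta_1 W_1^{(q)}(b-a)$. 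For $x > a$ only the $-\delta_1 W_1^{(q)}(b-x)$ piece survives, giving the indicator $1_{\{x<a\}}$ in the statement.

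For the second derivative I would simply differentiate the expression for $f_{a,b}^{(q)\prime}$ one more time, invoking Leibniz again on the remaining integral and using the one-sided derivative $W_1^{(q)\prime}((b-u)+)$; no further integration by parts is needed. The three summands $-\delta_1 W_1^{(q)}(b-x)$, $-\delta_2(\beta + \delta_1 W_1^{(q)}(b-a)) W_2^{(q)}(a-x)$, and $-\delta_1\delta_2 \int_x^a W_2^{(q)}(u-x) W_1^{(q)\prime}(b-u) \diff u$ produce respectively $\delta_1 W_1^{(q)\prime}((b-x)+)$, $\delta_2(\beta + \delta_1 W_1^{(q)}(b-a)) W_2^{(q)\prime}((a-x)+)$, and the two pieces $\delta_1\delta_2 W_2^{(q)}(0) W_1^{(q)\prime}((b-x)+)$ and $\delta_1\delta_2 \int_x^a W_2^{(q)\prime}(u-x) W_1^{(q)\prime}(b-u) \diff u$, matching the claim.

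The main delicacies, rather than genuine obstacles, are purely bookkeeping: keeping the case split $x<a$ versus $x>a$ so that the indicator $1_{\{x<a\}}$ appears correctly; working consistently with the one-sided derivatives $W_k^{(q)\prime}((\cdot)+)$ guaranteed by Remark 3.1, because the scale functions may fail to be $C^2$; and correctly handling the boundary evaluation $W_2^{(q)}(0)$ via \eqref{eq:Wqp0}, noting that this value is $0$ when $X_2$ has unbounded variation and $c_2^{-1}$ otherwise, so the cancellation of the $W_2^{(q)}(0) W_1^{(q)}(b-x)$ terms in the first derivative holds in all cases.
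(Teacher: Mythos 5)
Your proposal is correct and follows exactly the route the paper intends: the paper omits this proof, stating only that it "follows by direct differentiation and integration by parts," and your computation -- Leibniz's rule on the double integral, integration by parts to move the derivative from $W_2^{(q)}$ onto $W_1^{(q)}$, cancellation of the $W_2^{(q)}(0)W_1^{(q)}(b-x)$ terms, and a second differentiation with one-sided derivatives -- supplies precisely that argument and reproduces the stated formulas.
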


%\blue{[Kazu: Removed the proof of Lemma \ref{lemma_derivatives_g}, because as you say it is quite straightforward]}\\
%\green{The proof of the next result follows by \red{integration by parts and } straightforward differentiation.}
\begin{lemma} \label{lemma_derivatives_g}
% \red{[Probably cleaner to write like Remark \ref{remark_about_g}?]}
%\red{[I wrote this way as $\Phi(q)$ is a common factor.]}
	Fix $0\leq a\leq b$. For $x \in \R_+ \backslash \{a,b\}$, we have
	\begin{align*}
		\frac {g_{a,b}^{(q)\prime}(x) } {\Phi_0(q)}
		&= - e^{-\Phi_0(q) x} + \delta_1  l^{(q)\prime}(x;b) -\delta_2 1_{\{ x < a \}} \Big[  W^{(q)}_2(a-x) \Big( e^{- \Phi_0(q)a} - \delta_1 l^{(q)\prime}(a;b) \Big)\notag\\& + \int^{a}_{x} W_2^{(q)}(z-x)  \Big( \Phi_0(q) e^{-\Phi_0(q) z} + \delta_1 l^{(q)\prime \prime}(z;b)  \Big) \diff z \Big]\\%\label{u_derivative}\\%\%\red{[here I guess we can focus on the case of unbounded variation to make the expression shorter?]}  
				\frac {g_{a,b}^{(q)\prime \prime}(x-) } {\Phi_0(q)}
			%	&= \Phi^2(q) e^{-\Phi(q) x} + \delta_1 \Phi(q) l''(x;b) \notag\\
			%	&+  \delta_2 \Phi(q) 1_{\{ x < a \}}\Big(  W^{(q)\prime}_2(a-x) \Big( e^{- \Phi(q)a} - \delta_1 l'(a,b) \Big) +  W_2^{(q)}(0)  \Big( \Phi(q) e^{-\Phi(q) x} + \delta_1 l''(x;b)  \Big)  \notag\\ &+\int^{a}_{x} W_2^{(q)\prime}(z-x)  \Big( \Phi(q) e^{-\Phi(q) z} + \delta_1 l''(z;b)  \Big) \diff z \Big) \notag\\
				&=   \Big( \Phi_0(q) e^{-\Phi_0(q) x} + \delta_1  l^{(q)\prime \prime}(x;b) \Big) \Big( 1 + \delta_2 W_2^{(q)} (0) 1_{\{ x < a \}} \Big)+  \delta_2 1_{\{ x < a \}} \Big[  W^{(q)\prime}_2((a-x)+) \notag\\&\times\Big( e^{- \Phi_0(q)a} - \delta_1 l^{(q)\prime}(a;b) \Big)+ \int^{a}_{x} W_2^{(q)\prime}(z-x)  \Big( \Phi_0(q) e^{-\Phi_0(q) z} + \delta_1 l^{(q)\prime \prime}(z;b)  \Big) \diff z \Big].%\label{g_derivative_sec}
			\end{align*}
			%\red{[modified for the case $W$ is not differentiable.]}
			%\red{[should we remove the first equality for $g''$?]}
		%where $l(x; b) :=   \int^{b}_{x} e^{-\Phi(q) z} W_1^{(q)} (z-x) \diff z$.
		%\blue{Because for UBV case $W_2(0)=0$}
		%\blue{\begin{align*}
		%\frac {g_{a,b}^{(q)\prime \prime}(x-) } {\Phi_0(q)}
			%%	&= \Phi^2(q) e^{-\Phi(q) x} + \delta_1 \Phi(q) l''(x;b) \notag\\
			%%	&+  \delta_2 \Phi(q) 1_{\{ x < a \}}\Big(  W^{(q)\prime}_2(a-x) \Big( e^{- \Phi(q)a} - \delta_1 l'(a,b) \Big) +  W_2^{(q)}(0)  \Big( \Phi(q) e^{-\Phi(q) x} + \delta_1 l''(x;b)  \Big)  \notag\\ &+\int^{a}_{x} W_2^{(q)\prime}(z-x)  \Big( \Phi(q) e^{-\Phi(q) z} + \delta_1 l''(z;b)  \Big) \diff z \Big) \notag\\
				%&=    \Phi_0(q) e^{-\Phi_0(q) x} + \delta_1  l^{(q)\prime \prime}(x;b)  +  \delta_2 1_{\{ x < a \}} \Big[  W^{(q)\prime}_2((a-x)+) \notag\\&\times\Big( e^{- \Phi_0(q)a} - \delta_1 l^{(q)\prime}(a,b) \Big)+ \int^{a}_{x} W_2^{(q)\prime}(z-x)  \Big( \Phi_0(q) e^{-\Phi_0(q) z} + \delta_1 l^{(q)\prime \prime}(z;b)  \Big) \diff z \Big].%\label{g_derivative_sec}
			%\end{align*}}
			%\green{[we are not assuming UBV and so, we should not simplify?]}%\green{[I agree we should not simplify.]}

\end{lemma}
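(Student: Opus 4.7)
The lemma is a direct computational consequence of the formula for $g_{a,b}^{(q)}$ in \eqref{u_simplified}, and the authors note that it follows by direct differentiation combined with one application of integration by parts. The plan is therefore to execute these two steps carefully, making sure to track the contributions coming from the moving lower limit of the integral on $[x,a]$ and the interior points where $W_k^{(q)}$ fails to be smooth.

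First, I would set $h(z) := e^{-\Phi_0(q) z} - \delta_1 l^{(q)\prime}(z;b)$ so that \eqref{u_simplified} reads
\[
g_{a,b}^{(q)}(x) = e^{-\Phi_0(q) x} + \Phi_0(q) \delta_1 l^{(q)}(x;b) + \Phi_0(q) \delta_2 \int_x^a W_2^{(q)}(z-x) h(z) \, \diff z.
\]
Term by term, $e^{-\Phi_0(q) x}$ contributes $-\Phi_0(q) e^{-\Phi_0(q) x}$ and $\Phi_0(q) \delta_1 l^{(q)}(x;b)$ contributes $\Phi_0(q) \delta_1 l^{(q)\prime}(x;b)$. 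For $x > a$ the integral vanishes, while for $x < a$ the Leibniz rule gives
\[
\frac{\diff}{\diff x} \int_x^a W_2^{(q)}(z-x) h(z) \, \diff z = -W_2^{(q)}(0) h(x) - \int_x^a W_2^{(q)\prime}(z-x) h(z) \, \diff z.
\]
The next step is one integration by parts on the surviving integral, namely
\[
\int_x^a W_2^{(q)\prime}(z-x) h(z) \, \diff z = W_2^{(q)}(a-x) h(a) - W_2^{(q)}(0) h(x) - \int_x^a W_2^{(q)}(z-x) h'(z) \, \diff z,
\]
after which the two $W_2^{(q)}(0) h(x)$ terms cancel. Substituting $h(a) = e^{-\Phi_0(q) a} - \delta_1 l^{(q)\prime}(a;b)$ and $-h'(z) = \Phi_0(q) e^{-\Phi_0(q) z} + \delta_1 l^{(q)\prime\prime}(z;b)$ (using the definition of $h$ together with \eqref{l_prime}--\eqref{l_prime_double_prime}) yields the claimed first-derivative formula, valid on $\R_+ \setminus \{a,b\}$; at $x = a$ and $x = b$ we interpret the non-smooth terms as one-sided limits, which is why only left derivatives are claimed.

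For the second derivative, I differentiate the expression for $g_{a,b}^{(q)\prime}$ once more, again applying Leibniz to the integral on $[x,a]$ (the boundary term now being $-W_2^{(q)}(0) [\Phi_0(q) e^{-\Phi_0(q) x} + \delta_1 l^{(q)\prime\prime}(x;b)]$, which is exactly what produces the factor $1 + \delta_2 W_2^{(q)}(0) 1_{\{x<a\}}$ in the stated formula) and differentiating $W_2^{(q)}(a-x)$ to $-W_2^{(q)\prime}((a-x)+)$. Since $W_1^{(q)}$ and $W_2^{(q)}$ are only guaranteed to admit left- and right-hand derivatives (cf.\ Remark \ref{remark_smoothness}), all inner derivatives should be taken at $(a-x)+$, $(b-x)+$, and $x-$, matching the notation in the statement. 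Collecting terms produces the given expression.

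The routine calculations are lengthy but straightforward; the only place that requires care is the integration by parts and the careful bookkeeping of left versus right derivatives at the break points $x = a$ and $x = b$. These one-sided subtleties, rather than any conceptual obstacle, will be the most error-prone step.
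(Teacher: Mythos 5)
Your computation is correct and follows exactly the route the paper indicates (the proof is omitted there with the remark that it "follows by direct differentiation and integration by parts"): the Leibniz rule on the moving lower limit, one integration by parts that cancels the two $W_2^{(q)}(0)h(x)$ boundary terms, and careful one-sided derivatives at the non-smooth points. Nothing further is needed.
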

By taking limits in the above identities,
% \red{and using \eqref{eq:Wqp0}} \blue{[Kazu: where are we using \eqref{eq:Wqp0} below?]}, 
 the following results are immediate.
\begin{corollary} \label{lemma_difference_at_a_b_f}
%\red{[Should be more careful about the range.  $b > a \geq 0$ for the derivative at $b$ and $b > a > 0$ for the derivative at $a$?]}
 Fix $b > a\geq  0$ for the derivative at $b$ and $b > a > 0$ for the derivative at $a$.
(i) We have
\begin{align*}
f_{a,b}^{(q)\prime}(b+) - f_{a,b}^{(q)\prime}(b-) &=  \delta_1 W_1^{(q)} (0) \quad \textrm{and} 
\quad f_{a,b}^{(q)\prime}(a+) - f_{a,b}^{(q)\prime}(a-)  = \delta_2 W^{(q)}_2 (0) (\beta + \delta_1 W_1^{(q)}(b-a)).
\end{align*}
(ii)  In particular, for the case of unbounded variation, we have
\begin{align*}
f_{a,b}^{(q)\prime \prime}(b+) - f_{a,b}^{(q)\prime \prime}(b-) &=  - \delta_1 W_1^{(q) \prime} (0+) \quad \textrm{and} \quad 
f_{a,b}^{(q)\prime \prime}(a+) - f_{a,b}^{(q)\prime \prime}(a-)  = - \delta_2 W_2^{(q)\prime}(0+) ( \beta + \delta_1    W_1^{(q)}(b-a)).
%&\blue{\blue{f_{a,b}^{(q)}}''(a+) - \blue{f_{a,b}^{(q)}}''(a-)  =-\delta_2 W^{(q)\prime}_2 (0) (\beta + \delta_1 W_1^{(q)}(b-a))?}
\end{align*}
\end{corollary}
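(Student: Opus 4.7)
The plan is to obtain all four jump identities directly from the explicit formulas for $f_{a,b}^{(q)\prime}(x)$ and $f_{a,b}^{(q)\prime\prime}(x-)$ recorded in Lemma \ref{lemma_derivatives_f}, by taking one-sided limits at $x=a$ and $x=b$. The discontinuities are built into those formulas through exactly two mechanisms: (a) the indicator $1_{\{x<a\}}$ switches off as $x$ crosses $a$ from below; and (b) the factor $W_1^{(q)}(b-x)$, respectively its right-derivative $W_1^{(q)\prime}((b-x)+)$, drops to zero as $x$ crosses $b$, since $W_k^{(q)}$ (and hence $W_k^{(q)\prime}(\cdot +)$) is identically zero on $(-\infty,0)$. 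Other terms are continuous across $a$ and $b$.

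For part (i), the jump at $x=b$ is immediate: because $b>a$, the indicator $1_{\{x<a\}}$ is identically zero on a neighbourhood of $b$, so Lemma \ref{lemma_derivatives_f} reduces there to $f_{a,b}^{(q)\prime}(x) = -\delta_1 W_1^{(q)}(b-x)$, which gives $-\delta_1 W_1^{(q)}(0)$ as $x\uparrow b$ and $0$ as $x\downarrow b$. For the jump at $x=a$ (with $0<a<b$), the term $-\delta_1 W_1^{(q)}(b-x)$ is continuous at $a$, so the entire jump comes from the bracketed expression multiplied by the indicator: as $x\uparrow a$, the integral $\int_x^a W_2^{(q)}(u-x) W_1^{(q)\prime}(b-u)\,\diff u$ vanishes and $W_2^{(q)}(a-x)\to W_2^{(q)}(0)$, producing the contribution $-\delta_2 W_2^{(q)}(0)(\beta+\delta_1 W_1^{(q)}(b-a))$; as $x\downarrow a$ the indicator switches off, so this contribution disappears, yielding the stated difference.

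For part (ii), the same mechanism applies to the formula for $f_{a,b}^{(q)\prime\prime}(x-)$. In the unbounded variation case, \eqref{eq:Wqp0} gives $W_k^{(q)}(0)=0$ for $k=1,2$, which removes the cross term $\delta_1 W_2^{(q)}(0) W_1^{(q)\prime}((b-x)+)$ appearing in Lemma \ref{lemma_derivatives_f}. Proceeding exactly as in part (i), and using $W_2^{(q)\prime}((a-x)+)\to W_2^{(q)\prime}(0+)$ as $x\uparrow a$ together with $W_1^{(q)\prime}((b-x)+)\to W_1^{(q)\prime}(0+)$ as $x\uparrow b$, one reads off both jumps. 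No substantive obstacle arises; the only bookkeeping point is that $W_k^{(q)\prime}((b-x)+) = 0$ for $x>b$ because $W_k^{(q)}$ vanishes on the negative half-line, so the right-hand limits at $b$ are zero as needed.
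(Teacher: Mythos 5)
Your proposal is correct and follows exactly the route the paper intends: the paper derives this corollary simply "by taking limits in the above identities," i.e., in the formulas of Lemma \ref{lemma_derivatives_f}, and your one-sided limit computations at $a$ and $b$ (including the observations that the indicator $1_{\{x<a\}}$ and the vanishing of $W_1^{(q)}$ on the negative half-line are the only sources of discontinuity, and that $W_2^{(q)}(0)=0$ kills the cross term in the unbounded variation case) are precisely the omitted details. Nothing is missing.
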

\begin{corollary} \label{lemma_difference_at_a_b} %\red{[moved this here.]}
 Fix $b > a\geq  0$ for the derivative at $b$ and $b > a > 0$ for the derivative at $a$.
(i) We have
\begin{align*} %\label{u_1_prime_diff}
	\begin{split}
		%g_1^{(q)\prime}(b+) -   g_1^{(q)\prime}(b-) 
		%&=   \delta_1 \Phi(q) e^{- \Phi(q)b} W_1^{(q)} (0), \\
		%g_1^{(q)\prime}(a+) -   g_1^{(q)\prime}(a-) 
		%&=   0, \\
		%	\end{split}
		%	\end{align*}
		%	and 	 
		%		\begin{align*} %\label{u_2_prime_diff}
		%		\begin{split}
		g_{a,b}^{(q)\prime}(b+) -   g_{a,b}^{(q)\prime}(b-) 
		&=   \delta_1 \Phi_0(q)  e^{-\Phi_0(q) b} W_1^{(q)}(0), \\
		g_{a,b}^{(q)\prime}(a+) -   g_{a,b}^{(q)\prime}(a-) 
		&=   \delta_2 \Phi_0(q) W_2^{(q)}(0)  \Big[ e^{-\Phi_0(q) a} - \delta_1 l^{(q)\prime}(a;b)
		%+ \delta_1 \Big( e^{- \Phi(q)b} W_1^{(q)} (b-a) + \Phi(q) \red{l(a;b)} 
		%\int^{b}_{a} W_1^{(q)} (u-a) e^{-\Phi(q) u} \diff u
		%\Big) 
		\Big].	
	\end{split}
\end{align*}
(ii) In particular, for the case of unbounded variation, we have
\begin{align*}
	g_{a,b}^{(q) \prime \prime}(b+)  - g_{a,b}^{(q) \prime \prime}(b-)  &= - \delta_1 \Phi_0(q)   e^{-\Phi_0(q) b} W_1^{(q) \prime} (0+), \\
	g_{a,b}^{(q) \prime \prime}(a+) - g_{a,b}^{(q) \prime \prime}(a-)  &=   - \delta_2 \Phi_0(q) W_2^{(q) \prime} (0+)  
	\Big[ e^{-\Phi_0(q) a}  - \delta_1 l^{(q)\prime}(a;b)
	%+ \delta_1 \Big( e^{-\Phi(q)b} W_1^{(q)}(b-a) + \Phi(q) \int_a^b W_1^{(q)} (u-a) e^{-\Phi(q)u} \diff u \Big) 
	\Big].
\end{align*}
\end{corollary}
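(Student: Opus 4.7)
The proof proceeds by direct evaluation of one-sided limits in the formulas for $g_{a,b}^{(q)\prime}(x)$ and $g_{a,b}^{(q)\prime \prime}(x-)$ given by Lemma \ref{lemma_derivatives_g}. The strategy is to identify, in each formula, the two sources of discontinuity: (a) the indicator $\mathbf{1}_{\{x<a\}}$, which can only contribute a jump at $x=a$, and (b) the function $l^{(q)}(\cdot;b)$ and its derivatives, whose discontinuities are localized at $x=b$ because they propagate the jump of $W_1^{(q)}$ at $0$ (recall $W_1^{(q)}$ vanishes on $(-\infty,0)$ and that $W_1^{(q)}(0)$ is given by \eqref{eq:Wqp0}). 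Since we assume $b>a$, the two sources are well-separated and can be treated independently.

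For part (i) at $x=b$: every term in the expression for $g_{a,b}^{(q)\prime}(x)/\Phi_0(q)$ is continuous at $b$ except $\delta_1\,l^{(q)\prime}(x;b)$, and the indicator block is identically zero in a neighborhood of $b$. Using \eqref{l_prime} and the fact that $l^{(q)}(\cdot;b)$ is continuous with $l^{(q)}(b;b)=0$, the jump of $l^{(q)\prime}(\cdot;b)$ at $b$ equals $e^{-\Phi_0(q)b}W_1^{(q)}(0)$, which yields the claimed formula. For part (i) at $x=a$: the terms $-e^{-\Phi_0(q)x}$ and $\delta_1 l^{(q)\prime}(x;b)$ are continuous at $a$ (since $a<b$), so only the $\delta_2\,\mathbf{1}_{\{x<a\}}[\cdots]$ block matters. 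As $x\uparrow a$, the integral inside it vanishes and $W_2^{(q)}(a-x)\to W_2^{(q)}(0)$; at $x=a+$ the block vanishes identically. Subtracting gives the stated formula.

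For part (ii), the unbounded-variation hypothesis implies $W_k^{(q)}(0)=0$ for $k=0,1,2$ by \eqref{eq:Wqp0}. This has two simplifying consequences: the factor $(1+\delta_2 W_2^{(q)}(0)\mathbf{1}_{\{x<a\}})$ in the formula for $g_{a,b}^{(q)\prime\prime}(x-)/\Phi_0(q)$ collapses to $1$, and, via \eqref{l_prime}, $l^{(q)\prime}(\cdot;b)$ is now continuous at $b$. Consequently, the only jump at $b$ comes through $\delta_1 l^{(q)\prime\prime}(x;b)$; from \eqref{l_prime_double_prime} the jump of $l^{(q)\prime\prime}(\cdot;b)$ at $b$ equals $-e^{-\Phi_0(q)b}W_1^{(q)\prime}(0+)$ (the $-\Phi_0(q)l^{(q)\prime}(\cdot;b)$ piece is continuous, and $W_1^{(q)\prime}((b-x)+)\to W_1^{(q)\prime}(0+)$ from the left and to $0$ from the right), producing the announced expression. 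The jump at $a$ is handled in the same way: only the $\delta_2\,\mathbf{1}_{\{x<a\}}[\cdots]$ block can contribute, and its one-sided limit as $x\uparrow a$ is $\delta_2 W_2^{(q)\prime}(0+)(e^{-\Phi_0(q)a}-\delta_1 l^{(q)\prime}(a;b))$ after the integral vanishes.

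The entire argument is a bookkeeping exercise on one-sided limits, so there is no genuine obstacle; the only point that requires care is verifying, in part (ii), that the various terms carrying the factor $W_k^{(q)}(0)$ indeed drop out, so that in the unbounded-variation case the jumps are carried exclusively by the \emph{derivatives} $W_k^{(q)\prime}(0+)$. Once this is noted, both formulas follow by reading off the limits from Lemma \ref{lemma_derivatives_g}.
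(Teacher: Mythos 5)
Your proof is correct and follows exactly the route the paper intends: the paper states that the corollary is ``immediate by taking limits'' in Lemma \ref{lemma_derivatives_g}, and you carry out precisely that bookkeeping, correctly isolating the jump at $b$ as coming from $W_1^{(q)}(0)$ (resp.\ $W_1^{(q)\prime}(0+)$) through $l^{(q)\prime}$ (resp.\ $l^{(q)\prime\prime}$) via \eqref{l_prime}--\eqref{l_prime_double_prime}, and the jump at $a$ as coming solely from the indicator block. Your added remark that in the unbounded-variation case the $W_k^{(q)}(0)$ terms vanish by \eqref{eq:Wqp0} is exactly the point that makes part (ii) work.
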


%\green{ What in the case when $W_k^{(q) \prime} (0+)=\infty$?} 
%\blue{I guess this is not that really important. It is usual in control problems for L\'evy processes, and under the conditions imposed later this term of discontinuity will dissapear anyway. 
%We can leave it like this and change the argument if the referee complains?} 
%\red{yea, if the referee complains, we can explain.} \green{ OK.}

\subsection{Smooth fit} %\green{[Simplified the smooth fit results.]}

%\red{changed from Remark to subsections}

We shall now obtain the condition on the smoothness of $v_{a,b}$ at $b$. By Corollaries \ref{lemma_difference_at_a_b_f}(i) and \ref{lemma_difference_at_a_b}(i) applied to Lemma \ref{value_fun_explicit},
\begin{align*}
v_{a,b}'(b+) - v_{a,b}'(b-) %&= \blue{f_{a,b}^{(q)}}'(b+) - \blue{f_{a,b}^{(q)}}'(b-)  - \blue{f_{a,b}^{(q)}}(0) \frac {\tilde{u_2}^{(q)\prime}(b+)-\tilde{u_2}^{(q)\prime}(b-)} {\tilde{u_2}^{(q)}(0)} \\
 %=\delta_1 W^{(q)}_1 (0)  \Big[ 1  - (f_{a,b}^{(q)}(0) - \rho) \frac { \Phi(q)  e^{-\Phi(q) b}} {g_{a,b}^{(q)}(0)} \Big] \\
 = -\delta_1 W^{(q)}_1 (0) \frac {\Gamma(a,b)} {g^{(q)}_{a,b}(0)}, \quad 0 \leq a < b,
\end{align*}
where we define
\begin{align*}
\Gamma(a,b) := (f_{a,b}^{(q)}(0) - \rho) {\Phi_0(q)  e^{-\Phi_0(q) b}} - g_{a,b}^{(q)}(0), \quad 0 \leq a \leq b.
\end{align*}
%\red{[remove the middle equation above?]}
%\green{For the case with terminal reward, 
%\begin{align*}
%v_{a,b}'(b+) - v_{a,b}'(b-) %&= \blue{f_{a,b}^{(q)}}'(b+) - \blue{f_{a,b}^{(q)}}'(b-)  - \blue{f_{a,b}^{(q)}}(0) \frac {\tilde{u_2}^{(q)\prime}(b+)-\tilde{u_2}^{(q)\prime}(b-)} {\tilde{u_2}^{(q)}(0)} \\
% =\delta_1 W^{(q)}_1 (0)  \Big[ 1  - \tilde{f}_{a,b}(0) \frac { \Phi(q)  e^{-\Phi(q) b}} {\tilde{u_2}^{(q)}(0)} \Big].
%\end{align*}
%}
Hence, for the case of bounded variation (with $W_1^{(q)}(0) > 0$ by \eqref{eq:Wqp0}), the smoothness at $b$ holds if 
\begin{align*}
\mathbf{C_b:} %1= \blue{f_{a,b}^{(q)}}(0) \frac {\Phi(q)  e^{-\Phi(q) b}} {\tilde{u_2}^{(q)}(0)} \Longleftrightarrow  
 \Gamma(a,b)  = 0.
\end{align*} %\red{[I guess in the above we can just say $\Gamma = 0$?]}

%\green{For the case with terminal reward,
%\begin{align*}
%\Gamma(a,b) := \tilde{f}_{a,b}(0) {\Phi(q)  e^{-\Phi(q) b}} - \tilde{u_2}^{(q)}(0).
%\end{align*}
%JL: So for the case with terminal reward, we basically change from $\tilde{f}_{a,b}$ to $\blue{f_{a,b}^{(q)}}$. The form of the resulting value function will be the same because $\tilde{f}_{a,b}$ part can be rewritten.
%}

On the other hand, for the case of unbounded variation,  by Corollaries \ref{lemma_difference_at_a_b_f}(ii) and \ref{lemma_difference_at_a_b}(ii) applied to Lemma \ref{value_fun_explicit},

\begin{align*}
v_{a,b}''(b+) - v_{a,b}''(b-) 
%&= f_{a,b}^{(q)\prime \prime}(b+) - f_{a,b}^{(q)\prime \prime}(b-)  - (f_{a,b}^{(q)}(0) - \rho) \frac {g_{a,b}^{(q)\prime \prime}(b+)-g_{a,b}^{(q)\prime \prime}(b-)} {g_{a,b}^{(q)}(0)} \\
 %&=- \delta_1 W^{(q)\prime}_1 (0+)  \Big[ 1  - (f_{a,b}^{(q)}(0) - \rho) \frac {\Phi(q)  e^{-\Phi(q) b}} {g_{a,b}^{(q)}(0)} \Big]. \\
 = \delta_1 W^{(q)\prime}_1 (0+) \frac {\Gamma(a,b)} {g^{(q)}_{a,b}(0)}, \quad 0 \leq a < b.
\end{align*}
%\red{[remove the middle equation above?]}
Hence, $v_{a,b}$ is twice continuously differentiable at $b$ if $\mathbf{C_b}$ holds.

%\subsection{Smooth fit at $a$} 
Similarly, we shall obtain the condition on the smoothness of $v_{a,b}$ at $a$.
%\red{[moved here]}
We define, for $0 <a < b$,
 \begin{align}\label{def_gamma}
 \begin{split}
 	\gamma(a,b) :=\frac \partial {\partial a} \Gamma(a,b) &= \Phi_0(q)   \delta_2  W_2^{(q)} (a) \Big[ e^{-\Phi_0(q) b} ( \delta_1 W_1^{(q)}(b-a) + \beta )-     e^{-\Phi_0(q) a} +  \delta_1  l^{(q)\prime}(a;b) \Big]
	\\&= \Phi_0(q)   \delta_2  W_2^{(q)} (a) \Big[ e^{-\Phi_0(q) b}  \beta -    \Big( e^{-\Phi_0(q) a} + \delta_1  \Phi_0(q) l^{(q)}(a;b) 
	%\int^{b}_{a} W_1^{(q)} (u-a) e^{-\Phi(q) u} \diff u 
	\Big) \Big].
	\end{split}
 \end{align}
 
For $0 < a < b$, Corollaries \ref{lemma_difference_at_a_b_f}, and \ref{lemma_difference_at_a_b},
\begin{align*}
v'_{a,b}(a+) - v_{a,b}'(a-) %= f'_{a,b}(a+) - f'_{a,b}(a-)  - \blue{f_{a,b}^{(q)}}(0) \frac {g_{a,b}^{(q)\prime}(a+)-g_{a,b}^{(q)\prime}(a-)} {\tilde{u_2}^{(q)}(0)} \\
 &= \delta_2 W^{(q)}_2 (0)  \Big[ \beta + \delta_1 W_1^{(q)}(b-a)  - (f_{a,b}^{(q)}(0) - \rho) \frac { \Phi_0(q)  \big( e^{-\Phi_0(q) a} - \delta_1 l^{(q)\prime}(a;b)
% + \delta_1 \Big( e^{- \Phi(q)b} W_1^{(q)} (b-a) + \Phi(q) \int^{b}_{a} W_1^{(q)} (u-a) e^{-\Phi(q) u} \diff u\Big) 
 \big) } {g_{a,b}^{(q)}(0)} \Big] \\
 &=   \frac {W_2^{(q)}(0)} {g^{(q)}_{a,b}(0)}  \Big[(f_{a,b}^{(q)}(0) - \rho)   \frac {\gamma(a,b)}  {W_2^{(q)}(a)} -  \delta_2 (\beta + \delta_1 W_1^{(q)}(b-a)) \Gamma(a,b) \Big],
\end{align*}
%\red{[remove the middle equation?]}
and, for the unbounded variation case,
	\begin{align*}
		v_{a,b}''(a+) - v_{a,b}''(a-) %= f''_{a,b}(a+) - f''_{a,b}(a-)  - \blue{f_{a,b}^{(q)}}(0) \frac {g_{a,b}^{(q)\prime\prime}(a+)-g_{a,b}^{(q)\prime\prime}(a-)} {g_{a,b}^{(q)}(0)} \\
		%= - \delta_2 W^{(q)\prime}_2 (0+) (\beta + \delta_1 W_1^{(q)}(b-a)) \\ & + (f_{a,b}^{(q)}(0) - \rho) \frac {\delta_2 \Phi(q) W_2^{(q)\prime}(0+)  \Big[ e^{-\Phi(q) a} + \delta_1 \Big( e^{- \Phi(q)b} W_1^{(q)} (b-a) + \Phi(q) \int^{b}_{a} W_1^{(q)} (u-a) e^{-\Phi(q) u} \diff u\Big) \Big]} {g_{a,b}^{(q)}(0)} \\
		 &=  - \frac {W_2^{(q)\prime}(0+)} {g^{(q)}_{a,b}(0)}  \Big[(f_{a,b}^{(q)}(0) - \rho)   \frac {\gamma(a,b)}  {W_2^{(q)}(a)} -  \delta_2 (\beta + \delta_1 W_1^{(q)}(b-a)) \Gamma(a,b) \Big].
\end{align*}
%\red{[remove the middle equation?]}
%\red{[I think we only need $\mathbf{C_a}'$ and so I deleted about $\mathbf{C_a}$. Could you check?]}

Therefore, if condition $\mathbf{C_b}$ is satisfied and $a>0$, the continuous differentiability (resp.\ twice continuous differentiability) at $a$ for the case of bounded (resp.\ unbounded variation) holds on condition that
 \begin{align*}
 	\mathbf{C_a:}      \frac \partial {\partial a} \Gamma(a,b) = \gamma(a,b)= 0.
 \end{align*}
We shall now summarize the results below.
\begin{lemma} \label{smooth_fit_prob1}
(1) Suppose $\mathbf{C_b}$ is satisfied for $0 \leq a < b$. 
Then, $v_{a,b}$ is continuously differentiable (resp.\ twice continuously differentiable) at $b$ for the case of bounded (resp.\ unbounded) variation.

(2) Suppose in addition that $\mathbf{C}_a$ is satisfied for $0 < a < b$. Then, $v_{a,b}$ is continuously differentiable (resp.\ twice continuously differentiable) at $a$ for the case of bounded (resp.\ unbounded) variation.

\end{lemma}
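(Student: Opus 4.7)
The plan is to read off both parts of the lemma directly from the jump formulas for the one-sided derivatives of $v_{a,b}$ at $b$ and $a$ that have already been computed immediately above the statement, using the decomposition $v_{a,b} = f_{a,b}^{(q)} - (f_{a,b}^{(q)}(0) - \rho)\, g_{a,b}^{(q)}/g_{a,b}^{(q)}(0)$ together with Corollaries \ref{lemma_difference_at_a_b_f}--\ref{lemma_difference_at_a_b}. For part (1) I would invoke the identity $v_{a,b}'(b+) - v_{a,b}'(b-) = -\delta_1 W_1^{(q)}(0)\,\Gamma(a,b)/g_{a,b}^{(q)}(0)$. In the bounded variation case $W_1^{(q)}(0) = 1/c_1 > 0$ by \eqref{eq:Wqp0}, and the denominator is strictly positive: by \eqref{u_simplified} combined with $l^{(q)\prime} \leq 0$ (from \eqref{l_prime}) one sees $g_{a,b}^{(q)}(0) \geq 1$. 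Hence $C^1$ smoothness at $b$ is equivalent to $\Gamma(a,b) = 0$, i.e., $\mathbf{C_b}$. In the unbounded variation case $W_1^{(q)}(0) = 0$ renders $v_{a,b}'$ continuous at $b$ automatically, and the parallel second-derivative jump $\delta_1 W_1^{(q)\prime}(0+)\,\Gamma(a,b)/g_{a,b}^{(q)}(0)$ (with $W_1^{(q)\prime}(0+) \in (0,\infty]$) vanishes exactly under $\mathbf{C_b}$.

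For part (2) I would then assume $\mathbf{C_b}$, so that $\Gamma(a,b) = 0$, whereupon the formulas at $a$ derived above collapse to
\[
v_{a,b}'(a+) - v_{a,b}'(a-) = \frac{W_2^{(q)}(0)}{g_{a,b}^{(q)}(0)}\cdot \frac{(f_{a,b}^{(q)}(0) - \rho)\,\gamma(a,b)}{W_2^{(q)}(a)},
\]
with an analogous expression for the second-derivative jump carrying $-W_2^{(q)\prime}(0+)$ in place of $W_2^{(q)}(0)$. In the bounded variation case $W_2^{(q)}(0) = 1/c_2 > 0$ by Remark \ref{assump_drift} and \eqref{eq:Wqp0}, so $C^1$ smoothness at $a$ is equivalent to $\gamma(a,b) = 0$, i.e., $\mathbf{C_a}$. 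In the unbounded variation case $W_2^{(q)}(0) = 0$ delivers $C^1$ smoothness at $a$ for free, and the corresponding second-derivative jump carries the strictly positive (possibly infinite) factor $W_2^{(q)\prime}(0+)$ and hence vanishes precisely under $\mathbf{C_a}$.

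The only point needing a brief comment is the unbounded-variation sub-case with $W_k^{(q)\prime}(0+) = +\infty$ (the pure-jump case $\Pi(0,\infty) = \infty$), where the one-sided second derivatives $f_{a,b}^{(q)\prime\prime}((\cdot)-)$ and $g_{a,b}^{(q)\prime\prime}((\cdot)-)$ individually blow up at $b$ and at $a$. Inspection of the derivations preceding the lemma shows that these divergent contributions enter with coefficients proportional to $\Gamma(a,b)$ at $b$ and to $\gamma(a,b)$ at $a$, so they cancel under $\mathbf{C_b}$ and $\mathbf{C_a}$ respectively, leaving finite two-sided second derivatives that agree. The smoothness of $v_{a,b}$ on $(0,\infty)\setminus\{a,b\}$ needed to make the $C^1$/$C^2$ statements meaningful is inherited from Remark \ref{remark_smoothness} through the scale functions entering $f_{a,b}^{(q)}$ and $g_{a,b}^{(q)}$. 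I do not foresee any genuine obstacle here: the lemma is essentially a bookkeeping exercise on formulas already derived, with the only mild care required around the possibly infinite $W_k^{(q)\prime}(0+)$.
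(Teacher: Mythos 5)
Your proposal is correct and follows essentially the same route as the paper: there the lemma is presented as a direct summary of the jump formulas for $v_{a,b}'$ and $v_{a,b}''$ at $b$ and $a$ obtained by applying Corollaries \ref{lemma_difference_at_a_b_f} and \ref{lemma_difference_at_a_b} to the decomposition in Lemma \ref{value_fun_explicit}, which is exactly what you do. Your added checks --- the positivity of $g_{a,b}^{(q)}(0)$ via \eqref{l_prime}, and the cancellation of the divergent $W_k^{(q)\prime}(0+)$ contributions when $\Pi(0,\infty)=\infty$ --- are correct completions of details the paper leaves implicit.
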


%\red{[unbounded variation case to be included]}

								%\end{remark}
								
\subsection{ The existence of $(a^*, b^*)$} \label{subsection_existence_a_b}%\red{JL: I added this.}
%\green{How about:}
%\green{[At first observe that 
%for $0 < a < b$,
	 %\begin{align*}
	%&\frac \partial {\partial a}\left(\frac {\gamma(a,b)} {W_2^{(q)}(a)}\right) = \Phi(q)   \delta_2 
	%\Big[ \Phi(q) e^{-\Phi(q) a}+ \delta_1  \Phi(q)  W_1^{(q)} (0) e^{-\Phi(q) a}+\delta_1\Phi(q)\int_a^bW_1^{(q)\prime}(u-a)
	%e^{-\Phi(q)u}du \Big]>0.
%\end{align*}	}
%
%\green{Because
	 %\begin{align*}
%&\frac {\gamma(a,b)} {W_2^{(q)}(a)} = \Phi(q)   \delta_2   \Big( e^{-\Phi(q) b}  \beta - 
%\Big[ e^{-\Phi(q) a} + \delta_1  \Phi(q) \int^{b}_{a} W_1^{(q)} (u-a) e^{-\Phi(q) u} \diff u \Big] \Big), \quad 0 < a \leq b.
%\end{align*}}
%\green{
%In particular for $a=b$ we get
%\begin{align*}
%&\frac {\gamma(b,b)} {W_2^{(q)}(b)} = \Phi(q)   \delta_2    e^{-\Phi(q) b}  (\beta - 1) > 0.\end{align*}]}

%Using the fact that
%\red{[defined like this ok?]}
Let
\begin{align*}
	\tilde{\gamma}(a,b):= &\frac {\gamma(a,b)} {W_2^{(q)}(a)} = \Phi_0(q)   \delta_2   \Big( e^{-\Phi_0(q) b}  \beta - 
	\Big[ e^{-\Phi_0(q) a} + \delta_1  \Phi_0(q) l^{(q)}(a;b) 
	%\int^{b}_{a} W_1^{(q)} (u-a) e^{-\Phi(q) u} \diff u 
	\Big] \Big), \quad 0 < a \leq b.
\end{align*}
Then we have, for $0<a<b$, 	by \eqref{l_prime},
\begin{align*}
	& \frac \partial {\partial a} \tilde{\gamma}(a,b)
	%\frac \partial {\partial a}\left(\frac {\gamma(a,b)} {W_2^{(q)}(a)}\right)
	 = \Phi_0(q)   \delta_2 
	\Big[ \Phi_0(q) e^{-\Phi_0(q) a} - \delta_1 \Phi_0(q) l^{(q)\prime}(a;b)
	%+ \delta_1  \Phi(q)  W_1^{(q)} (0) e^{-\Phi(q) a}+\delta_1\Phi(q)\int_a^bW_1^{(q)\prime}(u-a)
	%e^{-\Phi(q)u} \diff u
	 \Big]>0.
\end{align*}	
In addition, for $a=b$, we get
\begin{align*}
	\tilde{\gamma}(b,b) %= &\frac {\gamma(b,b)} {W_2^{(q)}(b)} 
	= \Phi_0(q)   \delta_2    e^{-\Phi_0(q) b}  (\beta - 1) > 0.\end{align*}

%Because
	 %\begin{align*}
%&\frac {\gamma(a,b)} {W_2^{(q)}(a)} = \Phi(q)   \delta_2   \Big( e^{-\Phi(q) b}  \beta -    \Big[ e^{-\Phi(q) a} + \delta_1  \Phi(q) \int^{b}_{a} W_1^{(q)} (u-a) e^{-\Phi(q) u} \diff u \Big] \Big), \quad 0 < a \leq b,
%\end{align*}
%we have, for $0 < a < b$,
	 %\begin{align*}
	%&\frac \partial {\partial a}\frac {\gamma(a,b)} {W_2^{(q)}(a)} = \Phi(q)   \delta_2       \Big[ \Phi(q) e^{-\Phi(q) a}+ \delta_1  \Phi(q)  W_1^{(q)} (0) e^{-\Phi(q) a}+\delta_1\Phi(q)\int_a^bW_1^{(q)\prime}(u-a)e^{-\Phi(q)u}du \Big]>0,
%\end{align*}	
%with 
	 %\begin{align*}
%&\frac {\gamma(b,b)} {W_2^{(q)}(b)} = \Phi(q)   \delta_2    e^{-\Phi(q) b}  (\beta - 1) > 0.\end{align*}

For $b \geq 0$, in view of the above computations and the positivity of $W_2^{(q)}(a)$, there exists $a(b) \geq 0$ such that
%\begin{align*}
$\gamma(a,b) < 0$ for $a < a(b)$ and 
$\gamma(a,b) > 0$ for  $a > a(b)$.
%\end{align*}
%In particular,
%\begin{enumerate}
%\item when $a(b) > 0$, then $a \mapsto \gamma(a,b)$ is first negative and then positive (if $\gamma(0,b) < 0$), or
%\item it is uniformly positive (if $\gamma(0,b) \geq 0$).
%\end{enumerate}
%\blue{[Kazu: I believe that the above is not clear, because when $\gamma(0,b)\geq 0$ it does not necessarily implies that for $a<a(b)=0$ then $\gamma(a,b)<0$. It really meakes sense when $\gamma(0,b)<0$. Perhaps say that when (1) does not hold we make $a(b)=0$ or something?]} \red{[Are you talking about the one below? I think it makes sense because for the case $a(b)=0$, then $(0,a(b))$ is an empty set?]}\blue{[Kazu: Yes, then how about
%\begin{align*}
%	\gamma(a,b) &< 0, \quad 0\leq a < a(b), \\
%	\gamma(a,b) &> 0, \quad a > a(b).
%\end{align*}	
%or something.]}
Hence, 
\begin{enumerate}
\item when $a(b) > 0$ (or equivalently $\lim_{a \downarrow 0} \tilde{\gamma}(a,b)
%\gamma(a,b) /W^{(q)}_2(a) 
< 0$), then  $a \mapsto \Gamma(a,b)$ is decreasing on $(0, a(b))$ and increasing on $(a(b), b)$;
\item  when $a(b) = 0$ (or equivalently $\lim_{a \downarrow 0} \tilde{\gamma}(a,b)
%\gamma(a,b) /W^{(q)}_2(a) 
\geq 0$), then $a \mapsto \Gamma(a,b)$ is increasing.
\end{enumerate}
 %\blue{Is this really accurate because in the unbounded variation case $\gamma(0,b)=0$, but it can happen that $\gamma(0,b)/W^{(q)}_2(0)<0$. This can be seen in the numerical results, because if (2) holds then $a^*=0$ for all the results because we have $\sigma>0$ in the process used for the numerical results?} \red{[Keep the denominator above. Does this fix the problem?]}\blue{[Yeah I think we should leave the denomiator above.]}
Therefore, for each $b  \geq 0$,  $a(b)$ is the minimizer such that
\begin{align*}
\underline{\Gamma}(b) := \Gamma(a(b),b)  = \min_{0 \leq a \leq b} \Gamma(a,b).
\end{align*}

Because $f^{(q)}_{0,0}(0)= \delta_1 /q$ and $g_{0,0}^{(q)}(0)=1$, we have
% This implies that
 \begin{align*}
 \underline{\Gamma}(0)=\Gamma(0,0)=\left(\frac{\delta_1}{q}-\rho\right)\Phi_0(q)-1.
 \end{align*}
 Hence we have that $\underline{\Gamma}(0)\leq 0$ if and only if
 \begin{equation}
 \label{cond_degenerate}
 \left(\frac{\delta_1}{q}-\rho\right)\Phi_0(q)\leq 1.
 \end{equation}

\begin{lemma}\label{lemma_Gamma_bar_monotone} 
\label{lemma_gamma_decreasing} If $ \delta_1 /q>\rho$, then  $\underline{\Gamma}$ is strictly decreasing to $-\infty$.
%\green{How we know from the proof below that $\underline{\Gamma}$ decreases to $-\infty$? 
%Maybe this function has an asymptote?}\blue{See the modification in the proof below.} \green{ Now I am ok}
\end{lemma}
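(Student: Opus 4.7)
My plan is to reduce both claims (strict decrease and divergence to $-\infty$) to a single clean formula for $\partial_b \Gamma(a,b)$. Specifically, I will first establish
\[
\partial_b \Gamma(a,b) = -\Phi_0(q)^2 e^{-\Phi_0(q) b}\bigl(f_{a,b}^{(q)}(0) - \rho\bigr),
\]
and then exploit the assumption $\delta_1/q > \rho$, together with the fact that every term in the defining expression \eqref{def_f} of $f_{a,b}^{(q)}(0)$ is nonnegative and $f_{a,b}^{(q)}(0) \geq \delta_1/q$, to conclude $\partial_b \Gamma(a,b) < 0$ strictly. From this negativity, both monotonicity and divergence follow with little additional work.

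To derive the boxed formula, I differentiate the expressions for $f_{a,b}^{(q)}(0)$ and $g_{a,b}^{(q)}(0)$ from Lemma \ref{value_fun_explicit} directly in $b$. The key auxiliary identities are $\partial_b l^{(q)}(z;b) = e^{-\Phi_0(q) b} W_1^{(q)}(b-z)$ (from the Leibniz rule applied to \eqref{def_l_x_b}) and, using \eqref{l_prime},
\[
\partial_b l^{(q)\prime}(z;b) = -e^{-\Phi_0(q) b} W_1^{(q)\prime}((b-z)+).
\]
Substituting these into $\partial_b f_{a,b}^{(q)}(0)$ and $\partial_b g_{a,b}^{(q)}(0)$, the contribution $\Phi_0(q) e^{-\Phi_0(q) b}\partial_b f_{a,b}^{(q)}(0)$ is exactly cancelled by $-\partial_b g_{a,b}^{(q)}(0)$; both the $\delta_1 W_1^{(q)}(b)$ term and the integral involving $W_2^{(q)}(u) W_1^{(q)\prime}((b-u)+)$ vanish in the sum, leaving only the quadratic-in-$\Phi_0(q)$ term coming from differentiating the factor $e^{-\Phi_0(q) b}$ in the definition of $\Gamma$.

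For the strict decrease, fix $0 \leq b_1 < b_2$. Since $a(b_1) \in [0,b_1] \subset [0,b_2]$ is feasible for $b_2$,
\[
\underline{\Gamma}(b_2) \leq \Gamma(a(b_1), b_2) = \Gamma(a(b_1), b_1) + \int_{b_1}^{b_2} \partial_b \Gamma(a(b_1), b)\,\diff b < \Gamma(a(b_1),b_1) = \underline{\Gamma}(b_1),
\]
where the strict inequality uses that $\partial_b \Gamma$ is strictly negative on the whole admissible region. For the divergence, I bound $\underline{\Gamma}(b) \leq \Gamma(0,b)$ and write $\Gamma(0,b) = \underline{\Gamma}(0) + \int_0^b \partial_b \Gamma(0, b')\,\diff b'$, where $f_{0,b'}^{(q)}(0) = \delta_1 \overline{W}_1^{(q)}(b') + \delta_1/q$. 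The asymptotic \eqref{W_q_limit} applied to $X_1$ gives $\overline{W}_1^{(q)}(b') \sim e^{\Phi_1(q) b'}/(\Phi_1(q)\psi_1'(\Phi_1(q)))$, and because $\psi_1(\lambda) = \psi_0(\lambda) - \delta_1 \lambda < \psi_0(\lambda)$ for $\lambda > 0$, we have $\Phi_1(q) > \Phi_0(q)$. Hence $\int_0^\infty e^{-\Phi_0(q) b'} \overline{W}_1^{(q)}(b')\,\diff b' = \infty$, and the integrated expression tends to $-\infty$.

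The main obstacle is the bookkeeping in the cancellation argument for $\partial_b \Gamma(a,b)$: one must carefully track four distinct terms (two from $\partial_b f_{a,b}^{(q)}(0)$ and two from $-\partial_b g_{a,b}^{(q)}(0)$) and verify they pair off in matching sign. Beyond that, the smoothness issue for $W_1^{(q)\prime}$ at isolated points is a non-issue here because it enters only through integrals against absolutely continuous measures and through boundary evaluations that appear on both sides of the cancellation.
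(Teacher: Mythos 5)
Your proof is correct and follows essentially the same route as the paper: the same cancellation yielding $\partial_b \Gamma(a,b) = -\Phi_0^2(q)e^{-\Phi_0(q)b}\big(f_{a,b}^{(q)}(0)-\rho\big)$, the bound $f_{a,b}^{(q)}(0)\geq \delta_1/q>\rho$, and the growth of $e^{-\Phi_0(q)b}\overline{W}_1^{(q)}(b)$ via $\Phi_1(q)>\Phi_0(q)$. The only cosmetic difference is that you obtain the divergence by integrating $\partial_b\Gamma(0,\cdot)$ along $a=0$, whereas the paper deduces the uniform negative bound $\sup_{b>0}\sup_{0\leq a<b}\partial_b\Gamma(a,b)<0$; both rest on the same asymptotic.
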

\begin{proof}
For $0 \leq a < b$, because
\begin{align*}
\frac \partial {\partial b} f_{a,b}^{(q)}(0) &= \delta_1 \Big( W_1^{(q)} (b)+ \delta_2  \int^{a}_{0} W_2^{(q)} (u)  W^{(q)\prime}_1 (b-u)  \diff u \Big), \\
\frac \partial {\partial b}g_{a,b}^{(q)}(0) 
 &=  \delta_1 \Phi_0(q) e^{-\Phi_0(q) b} \Big( W_1^{(q)} (b)  +	  \delta_2  \int^{a}_{0} W_2^{(q)}(u) W_1^{(q)\prime} (b-u)  \diff u \Big) 
=\Phi_0(q) e^{-\Phi_0(q) b} \frac \partial {\partial b} f_{a,b}^{(q)}(0) ,
	 \end{align*}
	 %\red{[simplified a bit above]}
	we have %\red{[ok to delete the first equality below?]}
\begin{align*}
\frac \partial {\partial b}\Gamma(a,b) & = %\Big(\frac \partial {\partial b} f_{a,b}^{(q)}(0) \Big) {\Phi_0(q)  e^{-\Phi_0(q) b}} - (f_{a,b}^{(q)}(0)-\rho) {\Phi_0^2(q)  e^{-\Phi_0(q) b}} - \frac \partial {\partial b} g_{a,b}^{(q)}(0) \\
  - (f_{a,b}^{(q)}(0)-\rho) {\Phi_0^2(q)  e^{-\Phi_0(q) b}}  \\
%&= {\Phi(q)  e^{-\Phi(q) b}} \delta_1 \Big( W_1^{(q)} (b)+ \delta_2  \int^{a}_{0} W_2^{(q)} (u)  W^{(q)\prime}_1 (b-u)  \diff u \Big)  \\
%&+ \Phi^2(q)  e^{-\Phi(q) b}  \Big[ \delta_1 \Big( \overline{W}_1^{(q)} (b)+ \delta_2  \int^{a}_{0} W_2^{(q)} (u)  W^{(q)}_1 (b-u)  \diff u \Big)  +\beta\delta_2 \overline{W}^{(q)}_2 (a) \red{+ \frac {\delta_1} q} \Big]\\
%&- \delta_1 \Phi(q) e^{-\Phi(q) b} W_1^{(q)} (b)  -  \delta_2 \Phi(q) \int^{a}_{0} W_2^{(q)}(z)  \Big[ \delta_1  e^{- \Phi(q)b} W_1^{(q)\prime} (b-z) \Big] \diff z \\
 &=   - \Phi_0^2(q)  e^{-\Phi_0(q) b}  \Big[ \delta_1 \Big( \overline{W}_1^{(q)} (b)+ \delta_2  \int^{a}_{0} W_2^{(q)} (u)  W^{(q)}_1 (b-u)  \diff u \Big)  +\beta\delta_2 \overline{W}^{(q)}_2 (a) + \frac {\delta_1} q -\rho \Big], \end{align*}
 which admits a negative upper bound by the assumption $\delta_1 /q>\rho$:
 	 \begin{align*}
 	 	\sup_{0 \leq a < b}\frac \partial {\partial b}\Gamma(a,b)  &=   - \Phi_0^2(q)  e^{-\Phi_0(q) b} \left( \delta_1\overline{W}_1^{(q)} (b)+ \frac {\delta_1} q-\rho\right) < 0. \end{align*}
 	 On the other hand, using Exercise 8.5 (i) in \cite{K}, identity \eqref{W_q_limit}, and the fact that $\Phi_1(q)> \Phi_0(q)$ for $q>0$, we have that $\lim_{b\to\infty}e^{-\Phi_0(q) b}\overline{W}_1^{(q)} (b)=\infty$.
 	 Hence, we have $\sup_{b > 0}\sup_{0 \leq a < b}\frac \partial {\partial b}\Gamma(a,b) < 0$.
This shows the claim.
 \end{proof}
 
% Hence, I guess
% \begin{align*}
% b \mapsto \underline{\Gamma}(b)
% \end{align*}
% is decreasing. 
In order to obtain the required smoothness for the function $v_{a^*,b^*}$, we shall choose $(a^*, b^*)$ as follows. Suppose $ \delta_1 / q >\rho$. %\red{[Below, how about changing the order of (1) and (2)? After reading the case $\underline{\Gamma}(0) > 0$, people can understand why $a^* = b^* = 0$ is a natural choice for the case $\underline{\Gamma}(0) \leq 0$.  Overall, we should not argue too much, because we are just selecting our candidates.]}
%\green{[Changed the order of the items and simplified (2) to avoid giving details as we are just selecting the candidates.]}
\begin{enumerate}
 \item If $\underline{\Gamma}(0) > 0$, then we increase the value of $b$ until we attain $b^*$ such that $\underline{\Gamma}(b^*) = 0$ 
 (which exists and is unique by Lemma  \ref{lemma_gamma_decreasing}).  We set $a^* = a(b^*)$, which is either zero or positive. 
 By construction, $\mathbf{C_b}$ is satisfied. When $a^* > 0$, $a \mapsto \Gamma(a,b^*)$ attains a local minimum at $a^*$ and hence 
 $\mathbf{C}_a$ is satisfied. %When $a^* = 0$, we only need smoothness of $v_{a,b}$ at $b$, 
% which we already know that is satisfied because $\underline{\Gamma}(b^*)=\Gamma(0,b^*)$. %
 %\red{[This is an incomplete sentence.   ]}
% \red{[How about ``
 When $a^* = 0$, %we only pursue the smoothness of $v_{a^*,b^*}$ at $b^*$ which holds because $\underline{\Gamma}(b^*)=\Gamma(0,b^*)$}. \red{[I think ok to delete this sentence and say the next sentences hold for $a^* = 0$?]}
 %\green{Note that in this case, 
 we must have
 \begin{align}
 	\gamma(0, b^*) \geq 0, \label{gamma_zero_b}
 \end{align}
 because otherwise, by the relation \ref{def_gamma}, the function $a \mapsto \Gamma(a, b^*)$ must attain a negative value (which contradicts with $\underline{\Gamma}(b^*) = 0$).
 \item If $\underline{\Gamma}(0)=\Gamma(0,0) \leq 0$,  we set $a^* =b^*=0$.%then because for fixed $a\in[0,b]$ \red{$a \geq 0$?}, we have that $\Gamma(a,b)$ is decreasing in $b$ \red{on $[a, \infty)$}, condition $\mathbf{C_b}$ cannot be fulfilled. \blue{[Kazu: I am a little confused with this argument. I think we should give more details on why condition $\mathbf{C_b}$ cannot be fulfilled. Can it happen that $a\mapsto\Gamma(a,b)$ increases in a neighborhood of $0$ and that it can achieve the value $0$ for some $a\leq b$? Or just safer like in the paper with B\&B in which we only said we set $a^*=b^*=0$? ]} Hence, the 	
%required smoothness of $v_{a,b}$ can be obtained only if we take $b^*=0$, therefore we set $a^* =b^*=0$. \red{[I found this sentence is misleading because we cannot obtain the smoothness at $0$.  We can just say we set $a^* =b^*=0$?]}
 %Although actually it follows from the fact that $\Gamma(0,b)$ and $\Gamma(b,b)$ are decreasing in $b$, because one of the is the maximizer?}
% }
\end{enumerate}
Suppose $\delta_1 / q \leq \rho$. In this case, the terminal payoff $\rho$ dominates the perpetual dividend payoff $\int_0^\infty e^{-qt} \delta_1 \diff t$ and it is reasonably conjectured that it is optimal to liquidate as quickly possible. 
Hence, we set $a^* = b^* = 0$.

% This shows that 
% \begin{itemize}
% \item If $\underline{\Gamma}(0) \leq 0$, then we set $a^* = b^* = 0$.
% \item If $\underline{\Gamma}(0) > 0$, then there must exist $b^*$ such that $\underline{\Gamma}(b^*) = 0$.  We set $a^* = a(b^*)$, which is either zero or positive.
% \end{itemize}
 
\section{Optimality} \label{section_optimality}
In this section we will prove the optimality of the proposed multi-refraction strategy.

Suppose $b^* > 0$.  Because in this case condition $\mathbf{C_b}$ is satisfied, in view of Lemma \ref{value_fun_explicit} and using \eqref{l_prime}, we have, for $x \geq 0$,
\begin{align} \label{form_value_kazu}
\begin{split}
	v_{a^*,b^*}(x)
					&= f_{a^*,b^*}^{(q)}(x) - \frac{e^{\Phi_0(q) b^*}}{\Phi_0(q)} g_{a^*,b^*}^{(q)}(x) \\
					&= \delta_1  \overline{W}_1^{(q)} (b^*-x)  +\beta\delta_2 \overline{W}^{(q)}_2 (a^*-x) + \frac {\delta_1} q - \frac{e^{-\Phi_0(q)(x-b^*)}}{\Phi_0(q)} - \delta_1  e^{\Phi_0(q)b^*}
					l^{(q)}(x; b^*)
					%\int^{b^*}_{x} e^{-\Phi_0(q) z} W_1^{(q)} (z-x) \diff z 
					\\
	&-	\delta_2  e^{\Phi_0(q)b^*}   \int^{a^*}_{x} W_2^{(q)}(z-x)  \Big[ e^{-\Phi_0(q) z} + \delta_1   \Phi_0(q) 
	l^{(q)}(z; b^*)
	%\int^{b^*}_{z} W_1^{(q)} (u-z) e^{-\Phi_0(q) u} \diff u
	\Big]  \diff z.
	\end{split}
					 \end{align}
%					 In particular, for $ x > b^*$,
%					 \begin{align*}
%	v_{a^*,b^*}(x)
%					&=  \frac {\delta_1} q - \frac{e^{-\Phi_0(q)(x-b^*)}}{\Phi_0(q)},
%					 \end{align*}

On the other hand when $b^*=0$, Lemma \ref{value_fun_explicit} gives 
		\begin{equation}\label{form_value_0}
		v_{0,0}(x)=\frac{\delta_1}{q}-\left(\frac{\delta_1}{q}-\rho\right)e^{-\Phi_0(q)x} \qquad\textbf{$x\geq 0$}.
		\end{equation}
		
		%\green{[moved this here.]}
		\begin{theorem}\label{optimalv}
The multi-refraction strategy $\pi^{a^*,b^*}$ is optimal and the value function of the problem \eqref{v_pi_1} is given by $v(x)=v_{a^*,b^*}(x)$ for all $x\geq0$.
\end{theorem}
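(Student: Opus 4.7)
The plan is a standard verification argument. First I would establish a verification lemma: a function $w$ on $\mathbb{R}_+$, extended by $\rho$ on $(-\infty,0)$, that is of class $C^1$ in the bounded variation case (resp.\ $C^2$ in the unbounded variation case) and satisfies the HJB variational inequality
\[
(\mathcal{L}_Y - q)w(x) + \delta_1\bigl(1 - w'(x)\bigr)_+ + \delta_2\bigl(w'(x) - \beta\bigr)_+ \le 0, \qquad x > 0,
\]
together with $w(0) \ge \rho$ and suitable growth/integrability, dominates $v_\pi$ for every admissible $\pi$. Its proof is routine: applying Itô's formula to $e^{-qt} w(V^\pi(t \wedge \kappa_0^\pi))$ produces a semimartingale decomposition in which the finite variation part combines $(\mathcal{L}_Y - q)w(V^\pi)$, the drift terms $(r^\pi - \ell^\pi) w'(V^\pi)$, and the running reward $\ell^\pi - \beta r^\pi$; the HJB inequality makes the sum of the first two groups nonpositive once the reward terms are subtracted, so after taking expectations and localising one arrives at $w(x) \ge v_\pi(x)$, with equality whenever the HJB holds with equality along the controlled trajectory.

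Next, I would verify the hypotheses for $w = v_{a^*,b^*}$ in the nondegenerate case $b^* > 0$. Smoothness at $b^*$ (and at $a^*$ when $a^*>0$) is precisely Lemma \ref{smooth_fit_prob1} under the defining conditions $\mathbf{C}_b$ and $\mathbf{C}_a$, while on the open regions $(0,a^*)$, $(a^*,b^*)$, $(b^*,\infty)$ the regularity comes from that of the scale functions (Remark \ref{remark_smoothness}). Because $v_{a^*,b^*}$ is the NPV of the Markovian multi-refraction strategy, the generator equations
\[
(\mathcal{L}_Y - q)v_{a^*,b^*} + \delta_1\bigl(1 - v_{a^*,b^*}'\bigr) = 0, \quad (\mathcal{L}_Y - q) v_{a^*,b^*} = 0, \quad (\mathcal{L}_Y - q)v_{a^*,b^*} + \delta_2\bigl(v_{a^*,b^*}' - \beta\bigr) = 0
\]
hold on $(b^*,\infty)$, $(a^*,b^*)$, $(0,a^*)$ respectively, and $v_{a^*,b^*}(0) = \rho$ follows from $\mathbf{C}_b$ via Lemma \ref{value_fun_explicit}.

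The substantive step is to deduce the HJB inequality from these equalities by establishing the monotonicity bounds $v_{a^*,b^*}'(x) \ge \beta$ on $(0,a^*)$, $1 \le v_{a^*,b^*}'(x) \le \beta$ on $(a^*,b^*)$, and $v_{a^*,b^*}'(x) \le 1$ on $(b^*,\infty)$. At the interior thresholds these hold with equality by smooth fit: $v_{a^*,b^*}'(b^*) = 1$ from $\mathbf{C}_b$, and $v_{a^*,b^*}'(a^*) = \beta$ from $\mathbf{C}_a$ when $a^*>0$; when $a^* = 0$, the sign condition $\gamma(0,b^*) \ge 0$ recorded in \eqref{gamma_zero_b} plays the analogous role. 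The bounds away from the thresholds require differentiating the explicit representation \eqref{form_value_kazu} using Lemmas \ref{lemma_derivatives_f}--\ref{lemma_derivatives_g} and extracting sign information from the resulting combinations of $W_0^{(q)}, W_1^{(q)}, W_2^{(q)}$ and $l^{(q)}$; I expect the identity \eqref{RLqp} and the strict monotonicity of $\tilde\gamma(a,b)$ developed in Section \ref{subsection_existence_a_b} to be the main tools. This verification of the derivative inequalities is the principal obstacle of the proof, and it is where the defining conditions $\mathbf{C}_a$, $\mathbf{C}_b$, and their limiting version at $a^*=0$, must be exploited globally rather than just pointwise at the thresholds.

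Finally, for $b^* = 0$ the candidate is given by the closed form \eqref{form_value_0}. Differentiation yields $v_{0,0}'(x) = \Phi_0(q)(\delta_1/q - \rho)\, e^{-\Phi_0(q) x}$, which is bounded above by $\Phi_0(q)(\delta_1/q - \rho) \le 1 \le \beta$ by either \eqref{cond_degenerate} or $\delta_1/q \le \rho$. Hence $(1 - v_{0,0}')_+ = 1 - v_{0,0}'$ and $(v_{0,0}' - \beta)_+ = 0$, and the HJB inequality collapses to $(\mathcal{L}_Y - q)v_{0,0} + \delta_1(1 - v_{0,0}') \le 0$, which holds with equality since $v_{0,0}$ is the NPV of the strategy that pays dividends at rate $\delta_1$ and never injects. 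The boundary value $v_{0,0}(0) = \rho$ is immediate, so the verification lemma delivers the theorem in this case too.
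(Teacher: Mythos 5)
Your proposal is correct and follows essentially the same route as the paper: a verification lemma, the generator identities on the three regions, the boundary value $v_{a^*,b^*}(0)=\rho$, and the derivative bounds $v_{a^*,b^*}'\geq\beta$ on $(0,a^*)$, $1\leq v_{a^*,b^*}'\leq\beta$ on $(a^*,b^*)$, $v_{a^*,b^*}'\leq 1$ on $(b^*,\infty)$ obtained from smooth fit at the thresholds. The only point you leave open --- how to get those bounds globally --- is resolved in the paper by proving concavity of $v_{a^*,b^*}$ directly (Proposition \ref{convexity}), the key mechanism being the nonpositivity of $l^{(q)\prime}$ in \eqref{l_prime} after substituting $\mathbf{C}_a$ into the second-derivative formula, rather than \eqref{RLqp} or the monotonicity of $\tilde\gamma$ as you anticipated.
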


%\red{[maybe we can move the verification lemma here.]}
%\green{[Moved this here.]}

We call a function \emph{sufficiently smooth} on $(0, \infty)$ if it is differentiable (resp.\ twice differentiable) on $(0, \infty)$ when $-X_1 = Y$ is of bounded (resp.\ unbounded) variation.

We let $\mathcal{L}_{-X_1}$ be the operator acting on sufficiently smooth functions $g$, defined by
\begin{equation} \label{generator_Y}
\begin{split}
%\mathcal{L}_{-X} g(x)&:= -(\gamma_Y+\delta_1) g'(x)+\frac{\sigma^2}{2}g''(x) +\int_{(0,\infty)}[g(x + z)-g(x)-g'(x)z\mathbf{1}_{\{0<z<1\}}]\Pi(\mathrm{d}z),\\
\mathcal{L}_{-X_1} g(x)&:= - \gamma_1 g'(x)+\frac{\sigma^2}{2}g''(x) +\int_{(0,\infty)}[g(x + z)-g(x)-g'(x)z\mathbf{1}_{\{0<z<1\}}]\Pi(\mathrm{d}z).
%	\mathcal{L}_{-X_2} g(x)&:= -(\gamma_Y-\delta_2) g'(x)+\frac{\sigma^2}{2}g''(x) +\int_{(0,\infty)}[g(x + z)-g(x)-g'(x)z\mathbf{1}_{\{0<z<1\}}]\Pi(\mathrm{d}z).
\end{split}
\end{equation}
%\red{JL: These generators are only used in the proof of Lemma \ref{generator_on_v}.  How about using $\mathcal{L}_Y$ here and then define these three generators in the proof of Lemma \ref{generator_on_v}?}
The proof of the following lemma is deferred to Appendix \ref{proof_verificationlemma}.
\begin{lemma}[Verification lemma]
	\label{verificationlemma}
	Suppose $\hat{\pi}$ is an admissible dividend strategy such that $v_{\hat{\pi}}$ is sufficiently smooth on $(0,\infty)$ %continuously differentiable 
	%	at zero \red{[this is not needed?]}, 
	and satisfies

	%\end{align}
	%\textrm{\green{For UBV processes I see that this condition holds from (2.4), but not really for BV case? }}
	%\blue{[Maybe it does not happen for all strategies $\pi$, but it is a condition we need for optimality. And by Lemma \eqref{v_pi} we see that indeed holds for multirefracted strategies because in this case:
	%	\[
	%	v_{a,b}(0+)
	%	= \blue{f_{a,b}^{(q)}}(0) - (\blue{f_{a,b}^{(q)}}(0) - \rho) \frac {g_{a,b}^{(q)}(0)} {g_{a,b}^{(q)}(0)}=\rho.
	%	\]
	%	]} \red{[at least we need to write here and also we need to confirm later that our candidate satisfies this.]}\\
	%\green{[Sorry, I think this condition is fulfilled automatically since our process is spectrally positive with negative drift? Because we take it at $0+$?]}\blue{Yeah because we are asking that the process is not a subordinator, so in the BV case it most have negative drift. Anyway it follows directly from the computed form of the expected NPV given in Lemma 4.1.} \red{ok. delete the comments?}
	%and %\red{[below I took the common factor $r$ outside the sup.]}
	%\begin{align}
			\begin{align} \label{HJB-inequality}
		(\mathcal{L}_{-X_1} - q)v_{\hat{\pi}}(x)+ \sup_{0\leq r\leq\delta_1} r (1-v'_{\hat{\pi}}(x) \big)+ \sup_{0\leq r\leq\delta_2}  r (v'_{\hat{\pi}}(x)-\beta  \big) &\leq 0, \quad x   > 0, \\
				v_{\hat{\pi}}(0+)&=\rho. \label{v_at_zero}
	\end{align} 
	Then $v_{\hat{\pi}}(x)=v(x)$ for all $x\in\R$ and hence $\hat{\pi}$ is an optimal strategy.
\end{lemma}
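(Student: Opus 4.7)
The plan is to fix an arbitrary admissible strategy $\pi=(L^{\pi}, R^{\pi}) \in \mathcal{A}$ and to establish the upper bound $v_{\hat{\pi}}(x) \geq v_{\pi}(x)$; since $\hat{\pi}$ is itself admissible and $v_{\hat{\pi}}$ is by definition its expected NPV, taking the supremum over $\pi$ then forces $v_{\hat{\pi}}=v$ and certifies $\hat{\pi}$ as optimal.

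The core tool will be the It\^o--L\'evy change-of-variables formula applied to $s \mapsto e^{-qs} v_{\hat{\pi}}(V^{\pi}(s))$, stopped at $\theta_n := t \wedge \tau_n \wedge \kappa_0^{\pi}$ for a localising sequence $(\tau_n)_{n \geq 1}$ (taken, for instance, as exit times of $V^{\pi}$ from bounded intervals $(0,n)$). Because $L^{\pi}$ and $R^{\pi}$ are absolutely continuous, $V^{\pi}$ differs from the L\'evy process $Y=-X_1$ only by the absolutely continuous drift $\int_0^{\cdot} (r^{\pi}(s)-\ell^{\pi}(s))\,ds$, and the hypothesis that $v_{\hat{\pi}}$ is $C^1$ (resp.\ $C^2$) on $(0,\infty)$ in the bounded- (resp.\ unbounded-) variation case is precisely what licenses the use of It\^o's formula on the random interval $[0,\kappa_0^{\pi})$ where $V^{\pi}$ remains strictly positive. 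This yields
\begin{align*}
e^{-q\theta_n} v_{\hat{\pi}}(V^{\pi}(\theta_n)) &= v_{\hat{\pi}}(x) + M_{\theta_n} \\
&\quad + \int_0^{\theta_n} e^{-qs}\bigl[(\mathcal{L}_{-X_1}-q) v_{\hat{\pi}}(V^{\pi}(s-)) + (r^{\pi}(s)-\ell^{\pi}(s)) v_{\hat{\pi}}'(V^{\pi}(s-))\bigr]\, ds,
\end{align*}
where $M$ is a zero-mean martingale up to $\theta_n$. The admissibility bounds $\ell^{\pi}(s) \in [0,\delta_1]$ and $r^{\pi}(s) \in [0,\delta_2]$ make \eqref{HJB-inequality} applicable pointwise, majorising the bracketed integrand by $\beta r^{\pi}(s) - \ell^{\pi}(s)$. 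Taking expectations and rearranging gives
\[
v_{\hat{\pi}}(x) \geq \mathbb{E}_x\!\left[\int_0^{\theta_n} e^{-qs}(\ell^{\pi}(s) - \beta r^{\pi}(s))\, ds + e^{-q\theta_n} v_{\hat{\pi}}(V^{\pi}(\theta_n))\right].
\]

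To pass to the limit $n\to\infty$ followed by $t\to\infty$ (so that $\theta_n \uparrow \kappa_0^{\pi}$), two observations are key. On $\{\kappa_0^{\pi}<\infty\}$, spectral positivity of $Y$ together with the absolute continuity of $L^{\pi}$ and $R^{\pi}$ (which prevents $V^{\pi}$ from jumping downwards) forces $V^{\pi}$ to reach zero continuously, so $V^{\pi}(\kappa_0^{\pi})=0$ and the boundary condition \eqref{v_at_zero} identifies the terminal value as $\rho e^{-q\kappa_0^{\pi}}$. On $\{\kappa_0^{\pi}=\infty\}$, exponential discounting together with a sublinear growth control on $v_{\hat{\pi}}$ at infinity --- inherited from the universal dominating bound $(\delta_1+\beta\delta_2)/q$ on the absolute running cost --- forces $e^{-q\theta_n} v_{\hat{\pi}}(V^{\pi}(\theta_n))\to 0$. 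Monotone and dominated convergence then deliver $v_{\hat{\pi}}(x) \geq v_{\pi}(x)$, and the reverse inequality follows from admissibility of $\hat{\pi}$.

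The principal technical obstacle is the rigorous application of It\^o's formula in the bounded-variation case where only $v_{\hat{\pi}}\in C^1((0,\infty))$ is assumed. There one uses the decomposition $X_1(t)=c_1 t - S(t)$ with $S$ a driftless subordinator (so that $Y=-c_1 t + S(t)$), rewrites $\mathcal{L}_{-X_1}$ without the small-jump compensator, and invokes the classical change-of-variables formula for a $C^1$ function composed with a semimartingale of pure-jump finite-variation type --- an argument of the same flavour as in the verification lemmas of \cite{PYY, YP_RR_dual}, whose proofs can be adapted essentially verbatim. Secondary routine issues are the justification that $\mathbb{E}_x[M_{\theta_n}]=0$ (handled by the localisation) and the passage to the limit on the non-ruin event; both are handled by explicit bounds on $v_{\hat{\pi}}$ derived from its expression as an expectation under $\hat{\pi}$ involving the uniformly bounded rates $\ell^{\hat{\pi}}\le \delta_1$ and $r^{\hat{\pi}}\le \delta_2$.
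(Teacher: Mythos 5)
Your proposal is correct and follows essentially the same route as the paper's proof in Appendix B: localization via exit times of $V^{\pi}$ from $(1/n,n)$, the Meyer--It\^o change-of-variables formula applied to $e^{-qs}v_{\hat{\pi}}(V^{\pi}(s))$, the HJB inequality to majorize the integrand using $\ell^{\pi}\in[0,\delta_1]$ and $r^{\pi}\in[0,\delta_2]$, the compensation formula for the zero-mean martingale, the uniform bound $|v_{\hat{\pi}}|\leq \delta_1/q+\beta\delta_2/q+|\rho|$, and dominated convergence with \eqref{v_at_zero} identifying the terminal term as $\rho e^{-q\kappa_0^{\pi}}$ on $\{\kappa_0^{\pi}<\infty\}$. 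The only cosmetic difference is that you fold the control increments into a single drift term $(r^{\pi}-\ell^{\pi})v_{\hat{\pi}}'$ rather than keeping $\mathrm{d}L^{\pi}$ and $\mathrm{d}R^{\pi}$ separate, which is equivalent under the absolute-continuity assumption.
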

%\blue{Move the proof to the appendix?}

\begin{proposition}\label{convexity}
	(i) Suppose $\delta_1 / q >\rho$. The function $v_{a^*,b^*}$ is concave on $(0,\infty)$, $v_{a^*,b^*}'(a^*)\leq \beta$ (with equality when $a^* > 0$), and $v_{a^*,b^*}'(b^{*})\leq1$ (with equality when $b^* > 0$).
	
	(ii) Suppose $\delta_1 / q \leq\rho$. The function $v_{a^*,b^*}$ is convex on $(0,\infty)$, and $v_{a^*,b^*}'(x) \leq 0$ for all $x>0$.
\end{proposition}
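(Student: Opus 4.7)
The plan is to split the proof into three configurations: part (ii), the degenerate subcase of part (i) where $a^*=b^*=0$, and the generic subcase of part (i) where $b^*>0$. In the first two, $v_{a^*,b^*}=v_{0,0}$ has the explicit form \eqref{form_value_0}, so direct differentiation of the exponential shows that the sign of $v_{0,0}''$ matches the sign of $\rho-\delta_1/q$, yielding convexity together with $v_{0,0}'\le 0$ in part (ii) and strict concavity in the degenerate subcase of (i). In the latter subcase, the trigger $\underline{\Gamma}(0)\leq 0$ rearranges to $(\delta_1/q-\rho)\Phi_0(q)\leq 1$, which is precisely $v_{0,0}'(0+)\leq 1\leq\beta$, giving the slope bounds.

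For the generic subcase $b^*>0$ of (i), the plan is to analyze \eqref{form_value_kazu} separately on the three open intervals $(0,a^*)$, $(a^*,b^*)$, $(b^*,\infty)$ and glue them via the smooth fit of Lemma \ref{smooth_fit_prob1}. On $(b^*,\infty)$ all the scale-function terms in \eqref{form_value_kazu} drop out by support, and one reads off directly that $v_{a^*,b^*}'(x)=e^{\Phi_0(q)(b^*-x)}<1$ and $v_{a^*,b^*}''<0$, with $v_{a^*,b^*}'(b^*+)=1$. On $(a^*,b^*)$ the capital-injection integral vanishes, and substituting \eqref{l_prime} to rewrite $-\delta_1 e^{\Phi_0(q)b^*}l^{(q)\prime}(x;b^*)$ cancels a $W_1^{(q)}(b^*-x)$ term, leaving
\[
v_{a^*,b^*}'(x)=e^{\Phi_0(q)(b^*-x)}+\delta_1\Phi_0(q) e^{\Phi_0(q)b^*}l^{(q)}(x;b^*)\geq 1,
\]
with $v_{a^*,b^*}'(b^*-)=1$ since $l^{(q)}(b^*;b^*)=0$. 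A further differentiation, reapplying \eqref{l_prime}, expresses $v_{a^*,b^*}''$ on this interval as a sum of two manifestly nonpositive terms.

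The main obstacle is the region $(0,a^*)$, where the formula for $v_{a^*,b^*}'$ produced by Lemmas \ref{lemma_derivatives_f}--\ref{lemma_derivatives_g} is bulky. The pivotal simplification will be to rewrite condition $\mathbf{C_a}$ as $e^{-\Phi_0(q)a^*}+\delta_1\Phi_0(q) l^{(q)}(a^*;b^*)=e^{-\Phi_0(q)b^*}\beta$, whereby the Leibniz boundary term produced when differentiating the capital-injection integral collapses to $\beta\delta_2 W_2^{(q)}(a^*-x)$ and exactly cancels the $-\beta\delta_2 W_2^{(q)}(a^*-x)$ obtained from differentiating $\beta\delta_2\overline{W}_2^{(q)}(a^*-x)$. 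After a further use of \eqref{l_prime}, the surviving expression
\[
v_{a^*,b^*}'(x)=e^{\Phi_0(q)(b^*-x)}+\delta_1\Phi_0(q) e^{\Phi_0(q)b^*}l^{(q)}(x;b^*)+\delta_2\Phi_0(q) e^{\Phi_0(q)b^*}\int_x^{a^*}W_2^{(q)}(z-x)\bigl[e^{-\Phi_0(q)z}-\delta_1 l^{(q)\prime}(z;b^*)\bigr]\diff z
\]
will yield $v_{a^*,b^*}'(a^*-)=\beta$ directly from $\mathbf{C_a}$. Differentiating this expression once more and substituting \eqref{l_prime_double_prime} for $l^{(q)\prime\prime}$ decomposes $v_{a^*,b^*}''$ into terms of transparent sign (using $W_1^{(q)},W_2^{(q)},W_1^{(q)\prime},l^{(q)}\geq 0$ and $l^{(q)\prime}\leq 0$), all nonpositive, so $v_{a^*,b^*}''\leq 0$ throughout $(0,a^*)$. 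The subcase $a^*=0$ with $b^*>0$, where this interval is empty, is instead handled by the selection criterion: continuity of $a\mapsto\tilde{\gamma}(a,b^*)$ together with $\tilde{\gamma}(a,b^*)>0$ for $a>0$ forces the limiting inequality $e^{-\Phi_0(q)b^*}\beta\geq 1+\delta_1\Phi_0(q)l^{(q)}(0;b^*)$, which is exactly $v_{a^*,b^*}'(0+)\leq\beta$.

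Since Lemma \ref{smooth_fit_prob1} ensures $v_{a^*,b^*}\in C^1(0,\infty)$, the piecewise inequality $v_{a^*,b^*}''\leq 0$ will imply that $v_{a^*,b^*}'$ is globally nonincreasing, i.e.\ $v_{a^*,b^*}$ is concave on $(0,\infty)$. The boundary computations above combined with this monotonicity then yield the equalities $v_{a^*,b^*}'(a^*)=\beta$ and $v_{a^*,b^*}'(b^*)=1$ when the respective thresholds are strictly positive, and the stated inequalities in the edge cases.
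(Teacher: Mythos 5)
Your proposal is correct and follows essentially the same route as the paper: differentiate \eqref{form_value_kazu} (resp.\ \eqref{form_value_0}), use $\mathbf{C_a}$ in the form $e^{-\Phi_0(q)a^*}+\delta_1\Phi_0(q)l^{(q)}(a^*;b^*)=\beta e^{-\Phi_0(q)b^*}$ to collapse the terms on $(0,a^*)$, conclude $v_{a^*,b^*}''\leq 0$ from $l^{(q)\prime}\leq 0$ and the monotonicity of $W_2^{(q)}$, and handle the edge cases $a^*=0$ and $b^*=0$ via \eqref{gamma_zero_b} and \eqref{cond_degenerate}. The only difference is organizational: you simplify the first derivative on $(0,a^*)$ before differentiating again, whereas the paper substitutes the $\mathbf{C_a}$ identity directly into the second-derivative expressions coming from Lemmas \ref{lemma_derivatives_f}--\ref{lemma_derivatives_g}; both land on the same final formula for $v_{a^*,b^*}''$.
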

\begin{proof}
(i)  %Assume that $ \delta_1/ q >\rho$. \red{[don't need to write this?]}
(1) %[simplified like this] 
	Let us suppose $b^*>0$.  For $x > a^*$, differentiating \eqref{form_value_kazu} and by \eqref{l_prime}, 
\begin{align}
\label{vf_der_def}
\begin{split}
	v_{a^*,b^*}'(x) %&= - \delta_1  W_1^{(q)} (b^*-x)   + e^{\Phi_0(q)b^*}  e^{-\Phi_0(q) x} - e^{\Phi_0(q)b^*}\delta_1 
				%	l'(x; b^*) \\
					&=  e^{-\Phi_0(q) (x-b^*)} + \delta_1 \Phi_0(q) e^{\Phi_0(q)b^*} l^{(q)}(x; b^*)  
					%\int_x^{b^*} W_1^{(q)}(z-x) \Phi_0(q) e^{-\Phi_0(q)z} \diff z,
					\end{split}
					\end{align}
				%	\red{[ok to delete the first equality above?]}
and,	by \eqref{l_prime_double_prime},
\begin{align*}
	v_{a^*,b^*}''(x)
				%	&= \delta_1  W_1^{(q)\prime} (b^*-x)    - e^{\Phi_0(q)b^*} \Phi_0(q)  e^{-\Phi_0(q) x} - e^{\Phi_0(q)b^*}\delta_1 
				%	l''(x; b^*) \\
					&=    - \Phi_0(q) e^{-\Phi_0(q)(x-b^*)}   + \delta_1  \Phi_0(q)  e^{\Phi_0(q)b^*} l^{(q)\prime}(x;b^*).
					 \end{align*}
					% \blue{[Delete $<0$ above? Because we are saying its negative by the negativity of $l^{(q)\prime}$ below?]}\\
					 %\red{[ok to delete the first equality above?]}
					 Hence, by the nonpositivity of $l^{(q)\prime}$ as in \eqref{l_prime}, $v_{a^*, b^*}$ is concave on $(a^*, \infty)$. 
Now suppose $a^* > 0$ (which means $\gamma(a^*, b^*) = 0$) and consider the case $0 < x < a^*$. % Suppose $a^* > 0$ and hence  $\Gamma(a^*, b^*) = \gamma(a^*, b^*) = 0$.
We have, using Lemmas \ref{lemma_derivatives_f} and \ref{lemma_derivatives_g}, %instead so that we don't have to label \eqref{f_derivative_sec} and \eqref{g_derivative_sec}?]} 
 that %\red{[deleting the indicator as we are assuming $x < a^*$]}
\begin{align}\label{v_sec_der_aux}
\begin{split}
	v_{a^*,b^*}''(x-) &=\delta_1 \delta_2  \Big( W_1^{(q)}(b^*-a^*) W_2^{(q)\prime}((a^*-x) +)+ W_2^{(q)} (0)  W^{(q) \prime}_1 ((b^*-x)+)
	\\&+  \int^{a^*}_{x} W_2^{(q)\prime} (u-x)  W^{(q) \prime}_1 (b^*-u)  \diff u  \Big) 
	%1_{\{ x <a^* \}} 
	+ \delta_1  W_1^{(q) \prime} ((b^*-x)+) + \beta\delta_2 W^{(q)\prime}_2 ((a^*-x)+)  \\
	&- e^{\Phi_0(q) b^*} \Big[ \Big( \Phi_0(q) e^{-\Phi_0(q) x} + \delta_1  l^{(q)\prime \prime}(x-;b^*) \Big) \Big( 1 + \delta_2 W_2^{(q)} (0) 
	%1_{\{ x < a^* \}} 
	\Big) \\
	&\qquad \qquad +  \delta_2 \Big(  W^{(q)\prime}_2((a^*-x)+) \Big( e^{- \Phi_0(q)a^*} - \delta_1 l^{(q)\prime}(a^*,b^*) \Big)  \\ &\qquad \qquad + \int^{a^*}_{x} W_2^{(q)\prime}(z-x)  \Big( \Phi_0(q) e^{-\Phi_0(q) z} + 
	\delta_1 l^{(q)\prime \prime}(z;b^*)  \Big) \diff z \Big) \Big]. 
	\end{split}
\end{align}

%\red{[I think there were wrong signs below. Could you check?]}
Note that \eqref{l_prime}, \eqref{def_gamma} and the fact that $\gamma(a^*, b^*) = 0$ imply 
%Because $\gamma(a^*, b^*) = 0$ gives $l'(a^*, b^*) = e^{- \Phi_0(q) b^*} W_1^{(q)}(b^*-a^*) - (e^{-\Phi_0(q)b^*} \beta - e^{- \Phi_0(q) a^*})/ \delta_1$, we have
\begin{align*}
	e^{- \Phi_0(q)a^*} - \delta_1 l^{(q)\prime}(a^*; b^*) =   \delta_1  e^{- \Phi_0(q) b^*} W_1^{(q)}(b^*-a^*) + e^{-\Phi_0(q)b^*} \beta.
\end{align*}
Substituting the above expression and \eqref{l_prime_double_prime} in \eqref{v_sec_der_aux}, we have
\begin{align*}
	& v_{a^*,b^*}''(x-)  \\ &=\delta_1 \delta_2  \Big( W_1^{(q)}(b^*-a^*) W_2^{(q)\prime}((a^*-x)+)+ W_2^{(q)} (0)  W^{(q) \prime}_1 ((b^*-x)+)
	\\ &\qquad \qquad +  \int^{a^*}_{x} W_2^{(q)\prime} (u-x)  W^{(q) \prime}_1 (b^*-u)  \diff u  \Big) 
	%1_{\{ x <a^* \}}
	  \\ &+ \delta_1  W_1^{(q) \prime} ((b^*-x)+) + \beta\delta_2 W^{(q)\prime}_2 ((a^*-x)+) \\
	&- e^{\Phi_0(q) b^*} \Big[ \Big\{ \Phi_0(q) e^{-\Phi_0(q) x} + \delta_1 \Big(  e^{-\Phi_0(q)b^*} W^{(q)\prime}_1((b^*-x)+) - \Phi_0(q)  l^{(q)\prime}(x;b^*) \Big)  \Big\} \Big( 1 + \delta_2 W_2^{(q)} (0)
	 %1_{\{ x < a^* \}} 
	 \Big) \\
	&\qquad \qquad +  \delta_2  \Big\{  W^{(q)\prime}_2((a^*-x)+) \Big(\delta_1  e^{- \Phi_0(q) b^*} W_1^{(q)}(b^*-a^*) + e^{-\Phi_0(q)b^*} \beta \Big)  \\ &\qquad \qquad + \int^{a}_{x} W_2^{(q)\prime}(z-x)  \Big( \Phi_0(q) e^{-\Phi_0(q) z} + \delta_1 (  e^{-\Phi_0(q)b^*} W^{(q)\prime}_1(b^*-z) - \Phi_0(q)  l^{(q)\prime}(z;b^*) ) \Big) \diff z \Big\} \Big] \\
	&=
	- e^{\Phi_0(q) b^*} \Big[  \Big( \Phi_0(q) e^{-\Phi_0(q) x} - \delta_1  \Phi_0(q)  l^{(q)\prime}(x;b^*) \Big) \Big( 1 + \delta_2 W_2^{(q)} (0) 
	%1_{\{ x < a^* \}} 
	\Big) \\
	&+  \delta_2  \Big(    \int^{a^*}_{x} W_2^{(q)\prime}(z-x) \Phi_0(q)  \Big(  e^{-\Phi_0(q) z} - \delta_1   l^{(q)\prime}(z;b^*) \Big) \diff z \Big) \Big].
\end{align*}

This is negative  for all $x>0$ by the nonpositivity  of $l^{(q)\prime}$ as in \eqref{l_prime}.
\par Now, from \eqref{vf_der_def}, we deduce that $v'_{a^*,b^*}(b^*)=1$ and
	\begin{align*}
	v'_{a^*,b^*}(a^* +)&=  e^{-\Phi_0(q) (a^*-b^*)} + e^{\Phi_0(q)b^*}\delta_1 \Phi_0(q) l^{(q)}(a^*; b^*)  
	%\int_{a^*}^{b^*} W_1^{(q)}(z-a^*) \Phi_0(q) e^{-\Phi_0(q)z} \diff z,
	\end{align*}
%\green{where the last } \blue{[How about "
%where the right-hand side 
which
equals $\beta$ when $a^* > 0$ because condition $\mathbf{C}_a$ is satisfied, and is less than
  or equal to $\beta$ when $a^* = 0$, because of \eqref{gamma_zero_b}.

(2) Let us now suppose $b^*=0$. Differentiating \eqref{form_value_0}, we obtain that 
\begin{align}\label{der_v_00}
\begin{split}
	v_{0,0}'(x)&= \left(\frac{\delta_1}{q}-\rho\right)\Phi_0(q)e^{-\Phi_0(q)x}>0,\\
	v_{0,0}''(x)&=- \left(\frac{\delta_1}{q}-\rho\right) \Phi_0^2(q)e^{-\Phi_0(q)x}< 0.
	\end{split}
\end{align}
This implies that the function is strictly concave and hence 
\[
v_{0,0}'(x)< v_{0,0}'(0+)=\left(\frac{\delta_1}{q}-\rho\right)\Phi_0(q)\leq 1, \quad x > 0,
\]
where the last inequality follows from \eqref{cond_degenerate}.
\par (ii) Finally, let us consider the case $\delta_1 / q \leq \rho$. Because the equalities of  %\blue{[remove "first"? Because there are not second equalities?]}
 \eqref{der_v_00} hold in this case, for $x>0$, $v_{0,0}'(x)\leq0$, and $v_{0,0}''(x) \geq0$. This shows the result.
\end{proof}

%\red{JL: just added this remark -- I think it is simpler to show the generator term directly (than the Martingale argument) -- also, this way, we can confirm that the expression is accurate.}
%\begin{remark} 
%			Note that currently
%			\begin{itemize}
%			\item $Y_1$ corresponds to $W$ and $\Phi(q)$
%			\item $Y$ corresponds to $W_1$
%			\item $Y_2$ corresponds to $W_2$
%			\end{itemize}
%			\red{[JL: Later I guess we should swap $Y_1$ and $Y$ to be consistent.]}
%		\end{remark}
		%\blue{[Kazu: Made the below Remark a Lemma.]}
		
		%\green{[moved this here.]}\\
		The next result follows by a direct application of Proposition \ref{convexity}.
		\begin{lemma}\label{cond_max_v}
We have
\begin{align*}
\sup_{0\leq r\leq \delta_1}r (1-v_{a^*,b^*}'(x))+ \sup_{0\leq r\leq\delta_2} r (v'_{a^*,b^*}(x)-\beta ) =
\begin{cases}
	  \delta_2 (v_{a^*,b^*}'(x)-\beta) &\text{if $0<x< a^*$}, \\
	0 & \text{if $a^*\leq x< b^*$}, \\
	\delta_1 (1 -v_{a^*,b^*}'(x) ) & \text{if $x\geq b^*$}.
\end{cases}
\end{align*}
%In particular, this holds with equality for the case $a^*>0$.
\end{lemma}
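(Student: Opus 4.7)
The plan is to reduce the claim to a case analysis on the sign of $1 - v_{a^*,b^*}'(x)$ and $v_{a^*,b^*}'(x) - \beta$, and then read off these signs directly from Proposition \ref{convexity}. The starting observation is that the two one-dimensional linear maximizations decouple: for any real $y$,
\begin{align*}
\sup_{0\le r\le \delta_1} r(1-y) &= \delta_1 (1-y)^+, \\
\sup_{0\le r\le \delta_2} r(y-\beta) &= \delta_2 (y-\beta)^+.
\end{align*}
So once we know where $v_{a^*,b^*}'(x)$ sits relative to the thresholds $1$ and $\beta$, each supremum is immediate. Since $\beta > 1$, only one of the two can be strictly positive at a given $x$.

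In the case $\delta_1/q > \rho$ with $b^* > 0$, Proposition \ref{convexity}(i) tells us that $v_{a^*,b^*}$ is concave on $(0,\infty)$, with $v_{a^*,b^*}'(b^*) = 1$ and with $v_{a^*,b^*}'(a^*) = \beta$ when $a^* > 0$ (and $v_{a^*,b^*}'(0+) \le \beta$ when $a^* = 0$). The first step is to use concavity to propagate these boundary values: on $(0, a^*)$ we get $v_{a^*,b^*}'(x) \ge \beta > 1$, on $[a^*, b^*)$ we get $v_{a^*,b^*}'(x) \in [1, \beta]$, and on $[b^*, \infty)$ we get $v_{a^*,b^*}'(x) \le 1 < \beta$. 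Plugging these signs into the two suprema above yields exactly the three cases listed.

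The degenerate case $a^* = b^* = 0$ needs to be treated separately but is even shorter. When $\delta_1/q > \rho$ and $\underline{\Gamma}(0) \le 0$, the explicit formula from the proof of Proposition \ref{convexity}(i)(2) and the inequality \eqref{cond_degenerate} give $0 < v_{0,0}'(x) \le 1 < \beta$ for all $x>0$, so only the first supremum contributes and equals $\delta_1(1-v_{0,0}'(x))$, matching the third branch (here the first two branches are vacuous). When $\delta_1/q \le \rho$, Proposition \ref{convexity}(ii) gives $v_{0,0}'(x) \le 0 < 1 < \beta$, and the same conclusion holds.

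I do not anticipate a substantive obstacle: the entire content is a sign analysis of $v_{a^*,b^*}'(x)$ across the three regions, handed to us by Proposition \ref{convexity}. The only mild care required is at the boundary cases $a^* = 0$ and $b^* = 0$, where one must use the one-sided bounds $v_{a^*,b^*}'(0+) \le \beta$ and $v_{0,0}'(x) \le 1$ (from \eqref{cond_degenerate}) in place of equalities; checking that the piecewise formula still holds (with the trivially empty subcases) finishes the proof.
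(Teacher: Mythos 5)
Your proposal is correct and follows exactly the route the paper intends: the paper states that the lemma ``follows by a direct application of Proposition \ref{convexity}'', and your argument simply spells out that application, namely writing each supremum as $\delta_1(1-y)^+$ and $\delta_2(y-\beta)^+$ and then using the concavity of $v_{a^*,b^*}$ together with $v_{a^*,b^*}'(b^*)=1$ and $v_{a^*,b^*}'(a^*)=\beta$ (with the one-sided inequalities in the degenerate cases $a^*=0$ or $b^*=0$) to fix the signs on the three regions. No gaps.
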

%\red{[JL: I guess the proof is trivial.  Can we just say it holds by Proposition \ref{convexity}?] }
%\begin{proof}
%(i) For $x \geq b^*$, using Proposition \ref{convexity}, we have $v_{a^*,b^*}'(x)\leq1<\beta$, and therefore
%\begin{align*}
%\sup_{0\leq r\leq \delta_1}(-rv_{a^,b^*}'(x)+r)=-\delta_1v_{a^*,b^*}'(x)+\delta_1 \quad \textrm{and} \quad
%\sup_{0\leq r\leq\delta_2}(rv'_{a^*,b^*}(x)-\beta r)=0.
%\end{align*}

%(ii) For $a^{*}\leq x<b^*$, Proposition \ref{convexity} gives $1<v'_{a^*,b^*}(x) \leq \beta$, and hence
%\begin{align*}
%	\sup_{0\leq r\leq \delta_1}(-rv_{a^,b^*}'(x)+r)=0 \quad \textrm{and} \quad
%\sup_{0\leq r\leq\delta_2}(rv'_{a^*,b^*}(x)-\beta r)= 0.
%\end{align*}
%\green{Do we need to write this:}\blue{[I think is interesting to note that when $a^*>0$ we have equality in HJB equation.]}
%\green{Yeah, but in my opinion this supremum $\sup_{0\leq r\leq\delta_2}(rv'_{a^*,b^*}(x)-\beta r)$ equals $0$ for any  $a^{*}\leq x<b^*$ ?} 
%We also recall that $v_{a^*,b^*}'(a^*)=1$ when $a^*>0$, therefore $\sup_{0\leq r\leq\delta_2}(rv'_{a^*,b^*}(x)-\beta r)=0$. 

%\par (iii) For the case $a^*>0$, we have that Proposition \ref{convexity} gives for $0<x<a^*$, $v'_{a^*,b^*} (x)\geq \beta>1$, hence 
%\begin{align*}
%	\sup_{0\leq r\leq \delta_1}(-rv_{a^,b^*}'(x)+r)=0 \quad \textrm{and} \quad
%	\sup_{0\leq r\leq\delta_2}(rv'_{a^*,b^*}(x)-\beta r)=\delta_2v_{a^*,b^*}'(x)-\beta\delta_2.
%\end{align*}
%\end{proof}

		\begin{lemma}\label{generator_on_v}
		Recall the generator $\mathcal{L}_{-X_1}$ as in \eqref{generator_Y}.
		%If \blue{$b^*\geq0$}, we have
		We have 
		\begin{align*}
		(\mathcal{L}_{-X_1} - q) v_{a^*,b^*}(x)=
		\begin{cases}
			-\delta_2 (v_{a^*,b^*}'(x)-\beta)&\text{if $0<x< a^*$}, \\
			0 & \text{if $a^*\leq x< b^*$}, \\
			\delta_1(v_{a^*,b^*}'(x)- 1) & \text{if $x\geq b^*$}.
		\end{cases}
	\end{align*}
%	\blue{For the case $b^*=0$, we have $(\mathcal{L}_{Y} - q) v_{0,0}(x)=\delta_1v_{0,0}'(x)-\delta_1$}.
		\end{lemma}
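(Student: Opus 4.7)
The plan is to read off the local dynamics of the controlled process $V^{a^*, b^*}$ from its defining SDE \eqref{V_a_b_SDE}: in the three regions $(0, a^*)$, $(a^*, b^*)$, and $(b^*, \infty)$ the process coincides, respectively, with the (shifted) L\'evy processes $-X_2 = Y + \delta_2 t$, $-X_1 = Y$, and $-X_0 = Y - \delta_1 t$, while the densities of dividends paid and capital injected are constant and equal to $(\ell^{a^*, b^*}, r^{a^*, b^*}) = (0, \delta_2)$, $(0, 0)$, and $(\delta_1, 0)$, respectively. Combined with the elementary relations $\mathcal{L}_{-X_0} = \mathcal{L}_{-X_1} - \delta_1 \partial_x$ and $\mathcal{L}_{-X_2} = \mathcal{L}_{-X_1} + \delta_2 \partial_x$, the three cases of the lemma amount exactly to the inhomogeneous Kolmogorov equation for the expected NPV on each region.

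To make this rigorous I would fix $x$ in the interior of one of the regions, choose an open neighborhood $U \ni x$ with $\overline{U}$ contained in that region, and set $\tau := \inf\{t > 0 : V^{a^*, b^*}(t) \notin U\}$. The strong Markov property applied to \eqref{v_pi} at $\tau$ yields
\begin{align*}
v_{a^*, b^*}(x) = \mathbb{E}_x \left[ \int_0^\tau e^{-qt}\big(\ell^{a^*, b^*}(t) - \beta r^{a^*, b^*}(t)\big)\diff t + e^{-q\tau}v_{a^*, b^*}\big(V^{a^*, b^*}(\tau)\big) \right].
\end{align*}
By the smoothness of $W_k^{(q)}$ on $\R \setminus \{0\}$ (Remark \ref{remark_smoothness}), $v_{a^*, b^*}$ is sufficiently smooth on the interior of each region. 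Applying It\^o's formula to $t \mapsto e^{-qt} v_{a^*, b^*}(V^{a^*, b^*}(t \wedge \tau))$, taking expectation (the local martingale is bounded on $[0, \tau]$, hence genuine), and comparing with the identity above gives
\begin{align*}
0 = \mathbb{E}_x \left[ \int_0^\tau e^{-qt}\Big((\mathcal{L}_{-X_k} - q) v_{a^*, b^*}\big(V^{a^*, b^*}(t-)\big) + \ell^{a^*, b^*}(t) - \beta r^{a^*, b^*}(t)\Big) \diff t \right]
\end{align*}
with $k = 2, 1, 0$ on $(0, a^*)$, $(a^*, b^*)$, $(b^*, \infty)$, respectively. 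Shrinking $U$ to $\{x\}$ and invoking continuity of the integrand produces the pointwise identity $(\mathcal{L}_{-X_k} - q) v_{a^*, b^*}(x) + \ell^{a^*, b^*} - \beta r^{a^*, b^*} = 0$, which rearranges via the generator identities into each of the three stated cases.

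The most delicate point is the application of It\^o's formula in the bounded-variation case, where $v_{a^*, b^*}$ is only $C^1$; this is justified because \eqref{generator_Y} reduces to a first-order operator there (no $\sigma^2$ term and a finite-variation jump integral), so $C^1$ smoothness suffices, while in the unbounded-variation case the $C^2$ regularity guaranteed on $(0,\infty)\setminus\{a^*, b^*\}$ by Remark \ref{remark_smoothness} is enough. As a sanity check, on $x \geq b^*$ the closed form \eqref{form_value_kazu} collapses to $v_{a^*, b^*}(x) = \delta_1/q - e^{-\Phi_0(q)(x - b^*)}/\Phi_0(q)$, and the identity $\psi_0(\Phi_0(q)) = q$ (equivalently $(\mathcal{L}_{-X_0} - q) e^{-\Phi_0(q) x} = 0$) together with $\mathcal{L}_{-X_1} = \mathcal{L}_{-X_0} + \delta_1 \partial_x$ yields $(\mathcal{L}_{-X_1} - q) v_{a^*, b^*}(x) = \delta_1(v_{a^*, b^*}'(x) - 1)$ by a one-line computation, confirming the third case.
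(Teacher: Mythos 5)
Your argument is correct, but it is a genuinely different proof from the one in the paper. The paper proceeds purely analytically: it rewrites $v_{a^*,b^*}$ in the scale-function form \eqref{value_function_version2} and applies $(\mathcal{L}_{-X_k}-q)$ term by term, using the harmonicity identities \eqref{generator_Z_harmonic}, \eqref{generator_X_0_exp} and the convolution identities \eqref{generator_running} borrowed from \cite{BKY2}, together with the relations \eqref{generator_relation}; the lemma then drops out of an explicit cancellation. You instead read off from \eqref{V_a_b_SDE} that $V^{a^*,b^*}$ evolves as $-X_2$, $-X_1$, $-X_0$ with constant payoff rates on the three regions, and derive the inhomogeneous Kolmogorov equation by the strong Markov property at the exit time of a small neighbourhood plus Dynkin's formula, shrinking the neighbourhood to a point. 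What your route buys is independence from the particular closed form: you never need \eqref{generator_running} or the algebra of $l^{(q)}$, and the same argument would apply verbatim to any finite number of refraction levels. What it costs is that the stochastic-analysis steps (It\^o/Meyer--It\^o for a discontinuous semimartingale, a true-martingale argument for the compensated jump integral, and the passage to the pointwise identity by continuity of $y\mapsto(\mathcal{L}_{-X_k}-q)v_{a^*,b^*}(y)$, whose nonlocal term sees the whole of $v_{a^*,b^*}$ including the points $a^*,b^*$) all have to be justified, whereas the paper's computation is elementary once the cited identities are granted. Note also that your approach does not really dispense with Lemma \ref{value_fun_explicit}: the a priori regularity of $v_{a^*,b^*}$ that licenses It\^o's formula is only available through that explicit representation together with Remark \ref{remark_smoothness} and the smooth-fit choice of $(a^*,b^*)$ in Lemma \ref{smooth_fit_prob1}, as you implicitly acknowledge. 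Your closing sanity check on $[b^*,\infty)$ coincides with the paper's own computation for that region.

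One small point worth tightening: the parenthetical ``the local martingale is bounded on $[0,\tau]$, hence genuine'' is not quite right as stated, since the compensated-jump integral is not pointwise bounded; the standard fix is that its integrand $v_{a^*,b^*}(V(s-)+z)-v_{a^*,b^*}(V(s-))$ is bounded by $2(\delta_1/q+\beta\delta_2/q+|\rho|)$, so the compensation formula (as invoked in Appendix \ref{proof_verificationlemma}) makes it a zero-mean martingale. This is exactly the device the paper uses in the proof of Lemma \ref{verificationlemma}, so no new machinery is needed.
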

		\begin{proof}
	
			%\red{[moved here as it is only used in the proof]}
%			We let $\mathcal{L}_{-X}$,  $\mathcal{L}_{-X_1}$,  and $\mathcal{L}_{-X_2}$ be the operators acting on sufficiently smooth functions $g$, defined by
%			\begin{equation*}
%				\begin{split}
%					\mathcal{L}_{-X} g(x)&:= -(\gamma_Y+\delta_1) g'(x)+\frac{\sigma^2}{2}g''(x) +\int_{(0,\infty)}[g(x + z)-g(x)-g'(x)z\mathbf{1}_{\{0<z<1\}}]\Pi(\mathrm{d}z),\\
%					\mathcal{L}_{-X_1} g(x)&:=\mathcal{L}_{Y} g(x) =  -\gamma_Y g'(x)+\frac{\sigma^2}{2}g''(x) +\int_{(0,\infty)}[g(x + z)-g(x)-g'(x)z\mathbf{1}_{\{0<z<1\}}]\Pi(\mathrm{d}z),\\
%					\mathcal{L}_{-X_2} g(x)&:= -(\gamma_Y-\delta_2) g'(x)+\frac{\sigma^2}{2}g''(x) +\int_{(0,\infty)}[g(x + z)-g(x)-g'(x)z\mathbf{1}_{\{0<z<1\}}]\Pi(\mathrm{d}z).
%				\end{split}
%		\end{equation*}

		Recall the relationship \eqref{X_k_def}. 
		%For $k = 0,1,2$, let $\mathcal{L}_{-X_k}$ be the operators \green{acting on sufficiently smooth function $g$ and} corresponding to the process $X_k$. Then we have
		%\blue{[How about "
		For $k = 0,2$  and any sufficiently smooth function $g$, let $\mathcal{L}_{-X_k}$ be the operator corresponding to the process $X_k$, given by
		\begin{align} \label{generator_relation}
		\mathcal{L}_{-X_0} g = \mathcal{L}_{-X_1} g -\delta_1 g', \quad %\mathcal{L}_{-X_1}g = \mathcal{L}_{Y}g, \quad 
		\textrm{and} \quad \mathcal{L}_{-X_2}g = \mathcal{L}_{-X_1}g +\delta_2 g'.
		\end{align}
		%\green{[ What is above it is short, but operator needs to operate on some function? Write a word about $g$ above?]}
%		\begin{equation*}
%					\mathcal{L}_{-X_i} g(x):= - \gamma_i g'(x)+\frac{\sigma^2}{2}g''(x) +\int_{(0,\infty)}[g(x + z)-g(x)-g'(x)z\mathbf{1}_{\{0<z<1\}}]\Pi(\mathrm{d}z),
%\end{equation*}
%where $\gamma_0 = \gamma$ and $\gamma_i = \gamma - \sum_{k=1}^i \delta_k$. Note that, in particular, $\mathcal{L}_{-X_1} = \mathcal{L}_Y$.

First as in, for instance the proof of Theorem 2.1 in \cite{BKY},
\begin{align} \label{generator_Z_harmonic}
(\mathcal{L}_{-X_k}-q)Z_k^{(q)}(x)=0, \quad k=1,2, \quad x > 0.
\end{align}
In addition, direct computation gives
		\begin{align}
		(\mathcal{L}_{-X_0}-q)e^{-\Phi_0(q) x}=0. \label{generator_X_0_exp}
		\end{align}
		On the other hand, by the proof of Lemma 4.5 of \cite{BKY2}, we have that, for $x < b^*$,
		\begin{align} \label{generator_running}
		\begin{split}
		&(\mathcal{L}_{-X_k}-q) 
		%\red{l(x; b^*)}
		\int^{b^*}_{x} e^{-\Phi_0(q) z} W_{k}^{(q)} (z-x) \diff z
		=e^{-\Phi_0(q) x}\qquad\text{for $k=1,2$,}\\
		&(\mathcal{L}_{-X_2}-q)\int^{b^*}_{x} W_2^{(q)} (z-x)
		l^{(q)}(z; b^*)
		% \int^{b^*}_{z} W_1^{(q)} (u-z) e^{-\Phi_0(q) u} \diff u
		\diff z=
		l^{(q)}(x; b^*).
		%\int^{b^*}_{x} W_1^{(q)} (u-x) e^{-\Phi_0(q) u} \diff u.
		\end{split}
		\end{align}
		%\blue{I think $(\mathcal{L}_{-X_k}-q) \red{l(x; b^*)}
			%\int^{b^*}_{x} e^{-\Phi_0(q) z} W_{k}^{(q)} (z-x)
		%	=e^{-\Phi_0(q) x}$ holds for $k=1$. But it doesn't holds for $k=2$. Leave it like $(\mathcal{L}_{-X_k}-q) \int_x^{b^*}e^{-\Phi_0(q) z}W^{(q)}_k(z-x)dz=e^{-\Phi_0(q) x}$?}\\
		%\red{[moved here]} 
		Now we write \eqref{form_value_kazu} as, for $x \geq 0$,
\begin{align} \label{value_function_version2}
\begin{split}
	v_{a^*,b^*}(x)
					&= \delta_1  \frac {Z_1^{(q)} (b^*-x)} q  +\beta\delta_2 \frac {Z^{(q)}_2 (a^*-x) -1} q  - \frac{e^{-\Phi_0(q)(x-b^*)}}{\Phi_0(q)}  - e^{\Phi_0(q)b^*}\delta_1  l^{(q)}(x; b^*) 
					%\int^{b^*}_{x} e^{-\Phi_0(q) z} W_1^{(q)} (z-x) \diff z
					 \\
	&-	  e^{\Phi_0(q)b^*} \delta_2  \int^{a^*}_{x} W_2^{(q)}(z-x)  \Big[ e^{-\Phi_0(q) z} + \delta_1   \Phi_0(q) 
	l^{(q)}(z; b^*)
	%\int^{b^*}_{z} W_1^{(q)} (u-z) e^{-\Phi_0(q) u} \diff u
	\Big]  \diff z.
	\end{split}
					 \end{align}
		
		(i) Suppose $b^*>0$. For $x > b^*$, by \eqref{generator_X_0_exp}, we have
						\begin{align*}
				(\mathcal{L}_{-X_0} - q) v_{a^*,b^*}(x)  &=  (\mathcal{L}_{-X_0} - q) \frac {\delta_1} q -    (\mathcal{L}_{-X_0} - q) \frac {e^{-\Phi_0(q)(x-b^*)}} {\Phi_0(q)} = - \delta_1.
			\end{align*}
For $a^* < x < b^*$, we have, by 
%\eqref{generator_relation}, \eqref{generator_Z_harmonic}, \eqref{generator_X_0_exp} and \eqref{generator_running} 
%\blue{[Maybe shorter: by 
equations \eqref{generator_relation}-\eqref{generator_running}
%]}\green{[OK.]} 
applied to \eqref{value_function_version2},
			\begin{align*}
	(\mathcal{L}_{-X_1} - q) v_{a^*,b^*}(x)
					&= (\mathcal{L}_{-X_1} - q) \Big( \delta_1  \frac {Z_1^{(q)} (b^*-x) } q  - \frac{e^{\Phi_0(q)b^*}}{\Phi_0(q)}  e^{-\Phi_0(q) x} - e^{\Phi_0(q)b^*}\delta_1 l^{(q)}(x; b^*)
					%\int^{b^*}_{x} e^{-\Phi_0(q) z} W_1^{(q)} (z-x) \diff z
					 \Big)  \\
			%	&=	\delta_1 \frac \partial {\partial x} \Big(  - \frac{e^{\Phi_0(q)b^*}}{\Phi_0(q)}  e^{-\Phi_0(q) x} \Big)  - 
			%	e^{\Phi_0(q)b^*}\delta_1 (\mathcal{L}_{\green{-X_1}} - q) 
			%	\green{l^{(q)}(x; b^*)}
				% \int^{b^*}_{x} e^{-\Phi_0(q) z} W_1^{(q)} (z-x) \diff z  
			%	 \\
								&=- \delta_1 \Big(  - e^{\Phi_0(q)b^*}  e^{-\Phi_0(q) x} \Big)  - \delta_1  e^{-\Phi_0(q) (x-b^*)} =0.
			 \end{align*}
		%	 \red{[delete the second to last equality above?]}
			 
					 Suppose $x < a^*$. By 
					 %\eqref{generator_relation}, \eqref{generator_Z_harmonic}, and \eqref{generator_X_0_exp},
						%\blue{[Maybe shorter: by 
						equations \eqref{generator_relation}-\eqref{generator_X_0_exp},
						%]}\green{[OK.]} %\red{[simplified the second equality below.]}
					 \begin{align*}
					(\mathcal{L}_{-X_2} - q) Z_1^{(q)} (b^*-x) &=  (\mathcal{L}_{-X_1}-q) Z_1^{(q)} (b^*-x)  +\delta_2 \frac \partial {\partial x} Z_1^{(q)} (b^*-x)= - \delta_2 q W_1^{(q)} (b^*-x), \\
					(\mathcal{L}_{-X_2} - q)  e^{-\Phi_0(q) x}  
					&= (\mathcal{L}_{-X_0}-q)  e^{-\Phi_0(q) x}    +(\delta_1 + \delta_2) \frac \partial {\partial x}  e^{-\Phi_0(q) x}  =   - (\delta_1 + \delta_2) \Phi_0(q) e^{-\Phi_0(q)x}.
										\end{align*}
				%\red{[deleted a couple of equalities below.]} 
				On the other hand, by \eqref{generator_relation} and  \eqref{generator_running}, together with \eqref{l_prime}, 
				%integration by parts
					\begin{multline*}
					(\mathcal{L}_{-X_2} - q) l^{(q)}(x; b^*) 
					%\int^{b^*}_{x} e^{-\Phi(q) z} W_1^{(q)} (z-x) \diff z 
				%\\ &= (\mathcal{L}_{-X_1} - q) \int^{b^*}_{x} e^{-\Phi(q) z} W_1^{(q)} (z-x) \diff z + \frac \partial {\partial x}\delta_2 \int^{b^*}_{x} e^{-\Phi(q) z} W_1^{(q)} (z-x) \diff z \\
					=   e^{-\Phi_0(q) x}  + \delta_2 l^{(q)\prime}(x; b^*) 
					%+ \frac \partial {\partial x}\left[\delta_2 \int^{b^*}_{x} e^{-\Phi_0(q) z} W_1^{(q)} (z-x) \diff z\right] 				\\	
					%&=   e^{-\Phi_0(q) x}  - \delta_2 \Big( e^{-\Phi_0(q) x} W_1^{(q)} (0) + \int^{b^*}_{x} e^{-\Phi_0(q) z} W_1^{(q)\prime} (z-x) \diff z \Big) \\
					=  e^{-\Phi_0(q) x}  - \delta_2 \Big( e^{-\Phi_0(q) b^*} W_1^{(q)} (b^*-x) 
					+ \Phi_0(q) l^{(q)}(x; b^*)
					%\int^{b^*}_{x} e^{-\Phi_0(q) z} W_1^{(q)} (z-x) \diff z
					 \Big)
					 \end{multline*}
and, by \eqref{generator_running},
\begin{align*}
(\mathcal{L}_{-X_2} - q)   \int^{a^*}_{x} W_2^{(q)}(z-x)  \Big[ e^{-\Phi_0(q) z} + \delta_1   \Phi_0(q) l(z; b^*)
% \int^{b^*}_{z} W_1^{(q)} (u-z) e^{-\Phi_0(q) u} \diff u
\Big]  \diff z = e^{-\Phi_0(q) x} + \delta_1   \Phi_0(q) l^{(q)}(x; b^*). 
%\int^{b^*}_{x} W_1^{(q)} (u-x) e^{-\Phi_0(q) u} \diff u.
 \end{align*}
% Hence, 
 %\begin{align*}
%	(\mathcal{L}_{-X_2} - q) v_{a^*,b^*}(x)
%					&= - \delta_1   \delta_2 W_1^{(q)} (b^*-x)   +\beta\delta_2   + (\delta_1 + \delta_2) e^{-\Phi(q)(x-b^*)} \\ &- e^{\Phi(q)b^*}
%					\delta_1 \Big(  e^{-\Phi(q) x}  - \delta_2 \Big( e^{-\Phi(q) b^*} W_1^{(q)} (b^*-x) + \Phi(q) \int^{b^*}_{x} e^{-\Phi(q) z} W_1^{(q)} (z-x) \diff z \Big) \Big) \\
%	&-	  e^{\Phi(q)b^*} \delta_2  \Big( e^{-\Phi(q) x} + \delta_1   \Phi(q) \int^{b^*}_{x} W_1^{(q)} (u-x) e^{-\Phi(q) u} \diff u \Big) 
%	= \beta \delta_2.
%					 \end{align*}
%					 \red{Can we just say, ``
Substituting these in \eqref{value_function_version2}, we have $(\mathcal{L}_{-X_2} - q) v_{a^*,b^*}(x) = \beta \delta_2$. Hence the result holds by \eqref{generator_relation}.
					 
(ii) Now let us assume that $b^*=0$.
					For $x > 0$, we have by \eqref{form_value_0} and \eqref{generator_X_0_exp},
					\begin{align*}
						(\mathcal{L}_{-X_0} - q) v_{0,0}(x)  &=  (\mathcal{L}_{-X_0} - q) \frac {\delta_1} q -   \left(\frac {\delta_1} q-\rho\right) (\mathcal{L}_{-X_0} - q) e^{-\Phi_0(q)x}  = - \delta_1.
					\end{align*}
					This together with \eqref{generator_relation} completes the proof.
					% Now, the proof is complete by \eqref{generator_relation}. 
%					 noting that
%				\begin{equation*}
%	\begin{split}
%	%\blue{\mathcal{L}_{-X_1}} g(x)&:= -\gamma_Y g'(x)+\frac{\sigma^2}{2}g''(x) +\int_{(0,\infty)}[g(x + z)-g(x)-g'(x)z\mathbf{1}_{\{0<z<1\}}]\Pi(\mathrm{d}z),\\
%	 	\mathcal{L}_{-X} v_{a^*, b^*}(x)&= \mathcal{L}_{-X_1} v_{a^*, b^*}(x) -\delta_1 v_{a^*, b^*}'(x),\\
%	 		\mathcal{L}_{-X_2} v_{a^*, b^*}(x)&:= \mathcal{L}_{-X_1} v_{a^*, b^*}(x)  +\delta_2 v_{a^*, b^*}'(x).
%	\end{split}
%	\end{equation*}
					 \end{proof}
%\blue{[Kazu: Made this Lemma.]}

Now we have all the elements to prove Theorem \ref{optimalv}. %\red{[JL: For the case $b^*=0$, instead of talking only in this proof, I guess we should discuss the cases $b^* > 0$ and $b^* = 0$ in parallel?]}
\begin{proof}[Proof of Theorem \ref{optimalv}]
%	First we note that Lemma \ref{value_fun_explicit} implies that $v_{a^*,b^*}(0+)=\rho$.
%	Then by Lemmas \ref{smooth_fit_prob1} and  \ref{verificationlemma}, it is sufficient to show \eqref{HJB-inequality} with $v_{\hat{\pi}}$ replaced with $v_{a^*,b^*}$. 
%	 The result follows by noting that Lemmas  \ref{cond_max_v} and  \ref{generator_on_v} show \eqref{HJB-inequality}.
	 
	 First, by Lemma \ref{smooth_fit_prob1} and how the thresholds $a^*$ and $b^*$ are chosen, $v_{a^*, b^*}$ is sufficiently smooth.  In addition, Lemmas  \ref{cond_max_v} and  \ref{generator_on_v} show \eqref{HJB-inequality}, and Lemma \ref{value_fun_explicit} shows  \eqref{v_at_zero}. Hence, by Lemma \ref{verificationlemma}, the proof is complete. 
	 
\end{proof}

\section{Numerical Examples} \label{numerical_section}

%\red{[shortened the paragraph.]}
We conclude the paper with numerical examples of the optimal dividend problem studied above. Here, 
we focus on the case %the \red{process} $Y$ has i.i.d.\ phase-type distributed jumps
%, where their scale functions have analytical expressions, and hence the optimal strategy and the value function can be computed instantaneously.  The class of processes of this type is important because it can approximate any spectrally one-sided \lev process (see \cite{Asmussen_2004} and \cite{Egami_Yamazaki_2010_2}).  
%
%%\subsection{Phase-type case}
%%Let $Y$ be a spectrally positive process %\green{delete this:[ with i.i.d.\ phase-type distributed jumps \cite{Asmussen_2004}]} 
%%of the form
%We suppose
\begin{equation}
 Y(t)  - Y(0)= - c_Y t+\sigma B(t) + \sum_{n=1}^{N(t)} Z_n, \quad 0\le t <\infty, \label{X_phase_type}
\end{equation}
for some $c_Y \in \R$ %$\green{[I think we have to assume $c_Y>0$ as in multi-refracted paper, if will be that $c_Y<0$ and $\sigma=0$ we get the process with monotone paths??]}
%\blue{Yeah, but we are leaving out the case $c_Y<0$ and $\sigma>0$? Anyway we are using $c_Y>0$ and $\sigma>0$ in the simulations, 
%so may as well specify it here?} 
%\red{for the case of bounded variation we need to assume a bit more as in Assumption \ref{assump_subordinator}.  
%Anyways, we are assuming $\sigma > 0$ and so we don't need to say much here.}  
and $\sigma > 0$.  Here $B=( B(t); t\ge 0)$ is a standard Brownian motion, $N=(N(t); t\ge 0 )$ is a Poisson process with arrival rate $\kappa$, and  $Z = ( Z_n; n = 1,2,\ldots )$ is an i.i.d.\ sequence of phase-type-distributed random variables with representation $(m,{\bm \alpha},{\bm T})$; see \cite{Asmussen_2004}.
These processes are assumed mutually independent. 

%\red{[ok to delete the following?]}
The Laplace exponents of $-Y=X_1$ and $X_k$ ($k=0,2$) %\red{[check the consistency between $Y$ and $X$]}\green{Checked. Please look what I wrote before} 
are then (with ${\bm t} = -\bm{T 1}$ where ${\bm 1} = [1, \ldots 1]'$)
\begin{align*}
 \psi_0(\lambda)   &= (c_Y+ \delta_1 )\lambda + \frac 1 2 \sigma^2 \lambda^2 + \kappa \left( {\bm \alpha} (\lambda {\bm I} - {\bm{T}})^{-1} {\bm t} -1 \right), \\
 \psi_k(\lambda) &= \psi_0(\lambda) -\sum_{i=1}^k \delta_i \lambda,  \quad  \textrm{$k=1,2,$}
\end{align*}
which are analytic for every $s \in \mathbb{C}$ except at the eigenvalues of ${\bm T}$.  

Suppose  $( -\xi_{i,q}^{(k)}; i \in \mathcal{I}_q )$ are the sets of the roots with negative real parts of the equality $\psi_k(s) = q$.  We assume that the phase-type distribution is minimally represented and hence $|\mathcal{I}_q| = m+1$ as $\sigma > 0$; see \cite{Asmussen_2004}.  As in \cite{Egami_Yamazaki_2010_2}, if these values are assumed distinct, then
the scale functions of $X_k$ for $k=0,1,2$ can be written, for all $x \geq 0$, %\green{[shortened below.]}
\begin{align}
W^{(q)}_k(x) = \frac {e^{\Phi_k(q) x}} {\psi_k'(\Phi_k(q))} + \sum_{i \in \mathcal{I}_q} \frac 1 {\psi_k'(-\xi^{(k)}_{i,q})} e^{-\xi^{(k)}_{i,q}x},
\label{scale_function_form_phase_type}
\end{align}
respectively.
Let $q = 0.05$, $r = 0.05$, $\delta_1 = 1$, $\delta_2 = 0.5$, and $\beta = 1.2$.
For the process $Y$ in \eqref{X_phase_type}, we set $\sigma = 0.2$, $c_Y = 0.5$, and $\kappa = 1$ and, for $Z$, we use the phase-type distribution with  $m=6$ 
that gives an approximation to the (folded) normal random variable with mean $0$ and variance $1$ (see \cite{leung2015analytic} for the values of $\alpha$ and ${\bm T}$).  For the terminal payoff $\rho$, we consider \textbf{Case 1} with $\rho = 0$  and \textbf{Case 2} with $\rho = 5$ to obtain, respectively, the cases $a^* > 0$ and $a^* = 0$ (and $b^* > 0$).

\subsection{Computation of $(a^*, b^*)$ and  $v_{a^*,b^*}$}

The first implementation step is the computation of the optimal thresholds $(a^*, b^*)$.  
As we discussed in Section \ref{subsection_existence_a_b}, we choose these such that $\underline{\Gamma}(b^*) = 0$ and 
$a^* = \arg \min_{a \geq 0} \Gamma(a,b^*)$  if $\underline{\Gamma}(0) > 0$ (equivalently, \eqref{cond_degenerate} does not hold); 
for $\underline{\Gamma}(0) \leq 0$, we set $a^* = b^* = 0$.  In both \textbf{Cases 1}  and \textbf{2}, we have $\underline{\Gamma} (0) > 0$ and hence $b^* > 0$.
As we have shown in Section \ref{subsection_existence_a_b}, $a \mapsto \Gamma(a,b)$ attains its minimum at $a(b) \geq 0$, which can be computed by bisection applied to the function $\gamma(\cdot, b)$. 
In addition, by Lemma \ref{lemma_Gamma_bar_monotone}, we can obtain the root of $\underline{\Gamma} (\cdot) = 0$ by another bisection method; the root becomes $b^*$ and  $a^* = a(b^*)$.  With these $a^*$ and $b^*$, the value function becomes $v_{a^*, b^*}$ as in \eqref{form_value_kazu} when $b^* > 0$ and \eqref{form_value_0} when $b^* = 0$.

In Figure \ref{plot_gamma}, we plot, for \textbf{Cases 1} and \textbf{2}, the mappings $a \mapsto \Gamma(a,b)$ and $a \mapsto \gamma(a,b)$ 
for various values of $b$.  As in the proof of Lemma \ref{lemma_Gamma_bar_monotone}, $\Gamma(a,b)$ is decreasing in $b$. 
The value $a(b)$ corresponds to the point (indicated by down-pointing triangles) at which $\Gamma(\cdot, b)$ attains a minimum and $\gamma(\cdot, b)$ vanishes (if $a(b) > 0$). %\blue{Kazu: You mean positive for Case 2?} \red{I mean Case 1.}).  
The curve $a \mapsto \Gamma(a,b^*)$ touches and gets tangent to the x-axis at $a^*$.
%The optimal pair is such that the minimum $\underline{\Gamma}$ is zero.
% In \textbf{Case 1}, we attain $0 < a^* < b^*$ and in \textbf{Case 2}, we attain $0 = a^* < b^*$.  In this case, $\epsilon  = $.  The curve for $b^*$ is in solid red and ...
\begin{figure}[h!]
\begin{center}
\begin{minipage}{0.9\textwidth}
\centering
\begin{tabular}{cc}
 \includegraphics[scale=0.33]{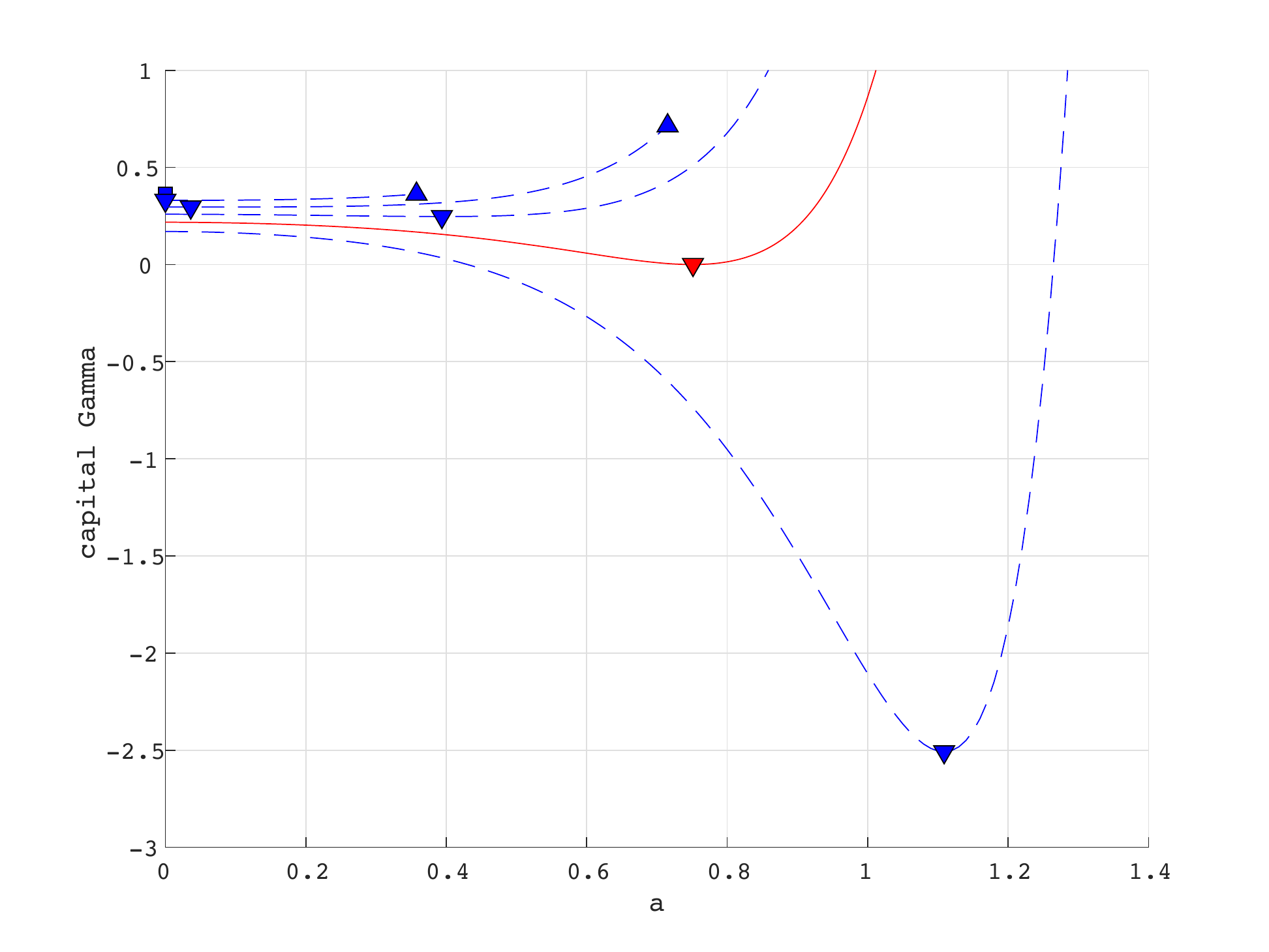} & \includegraphics[scale=0.33]{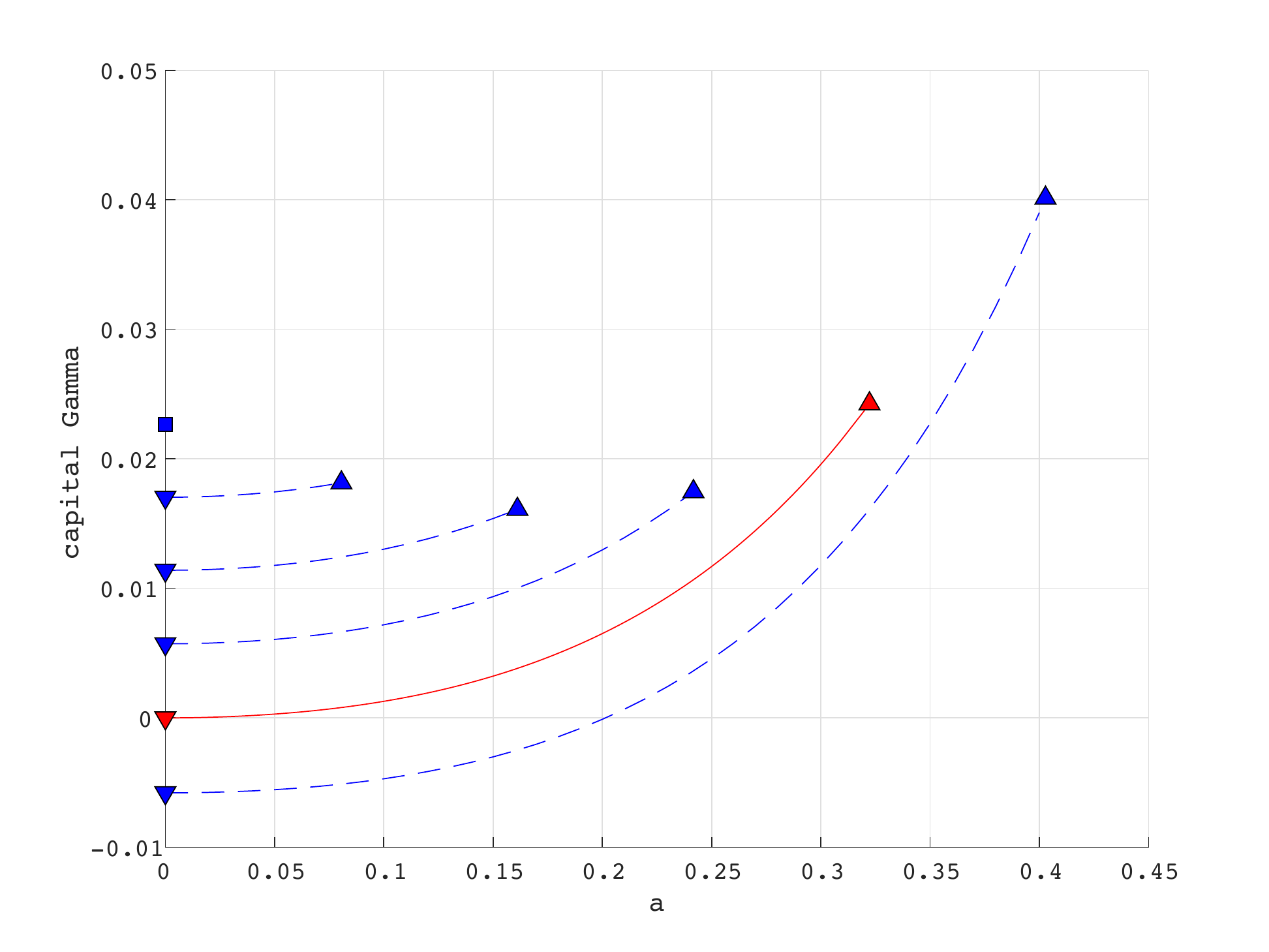}  \\
   \includegraphics[scale=0.33]{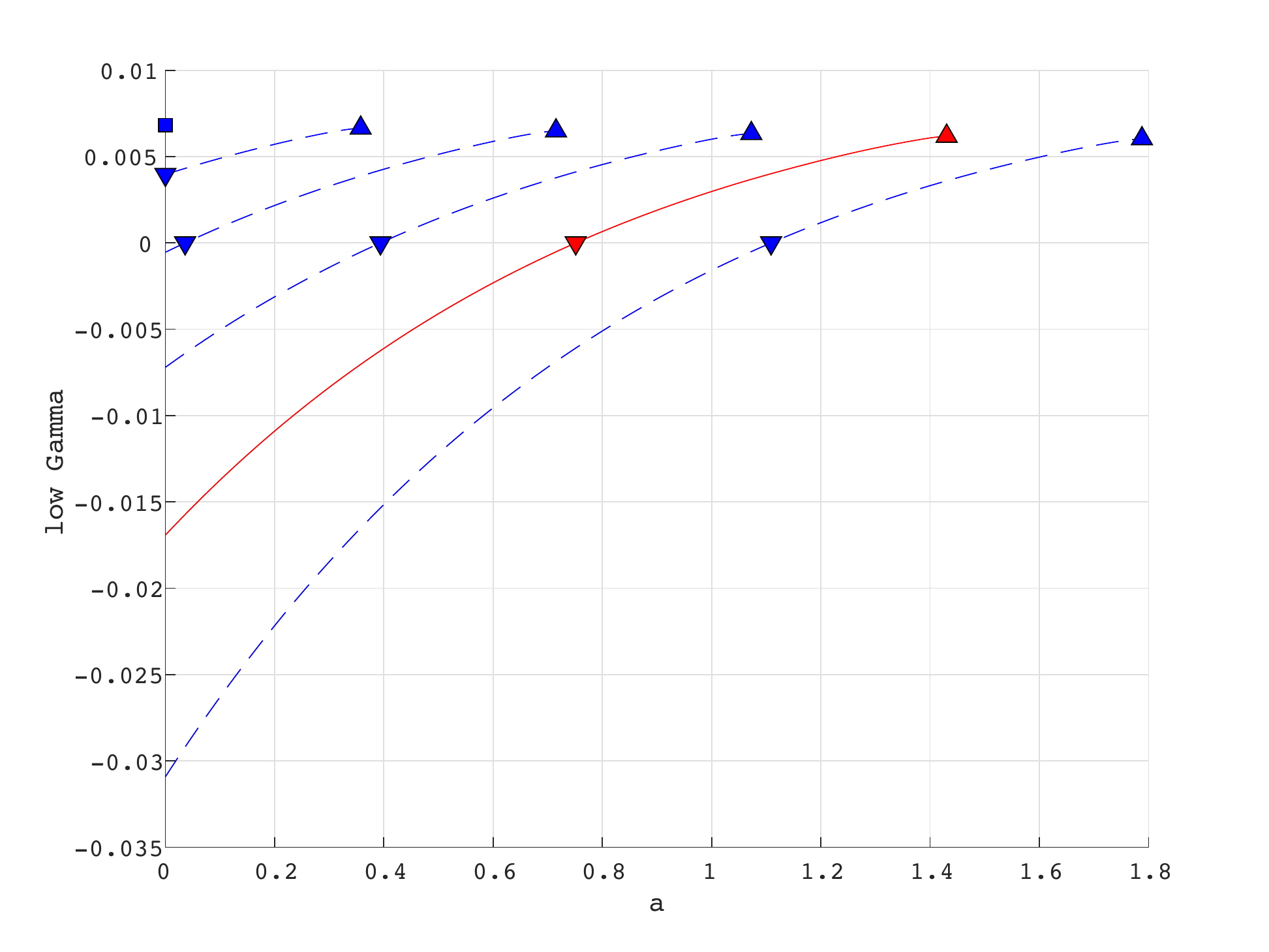} & \includegraphics[scale=0.33]{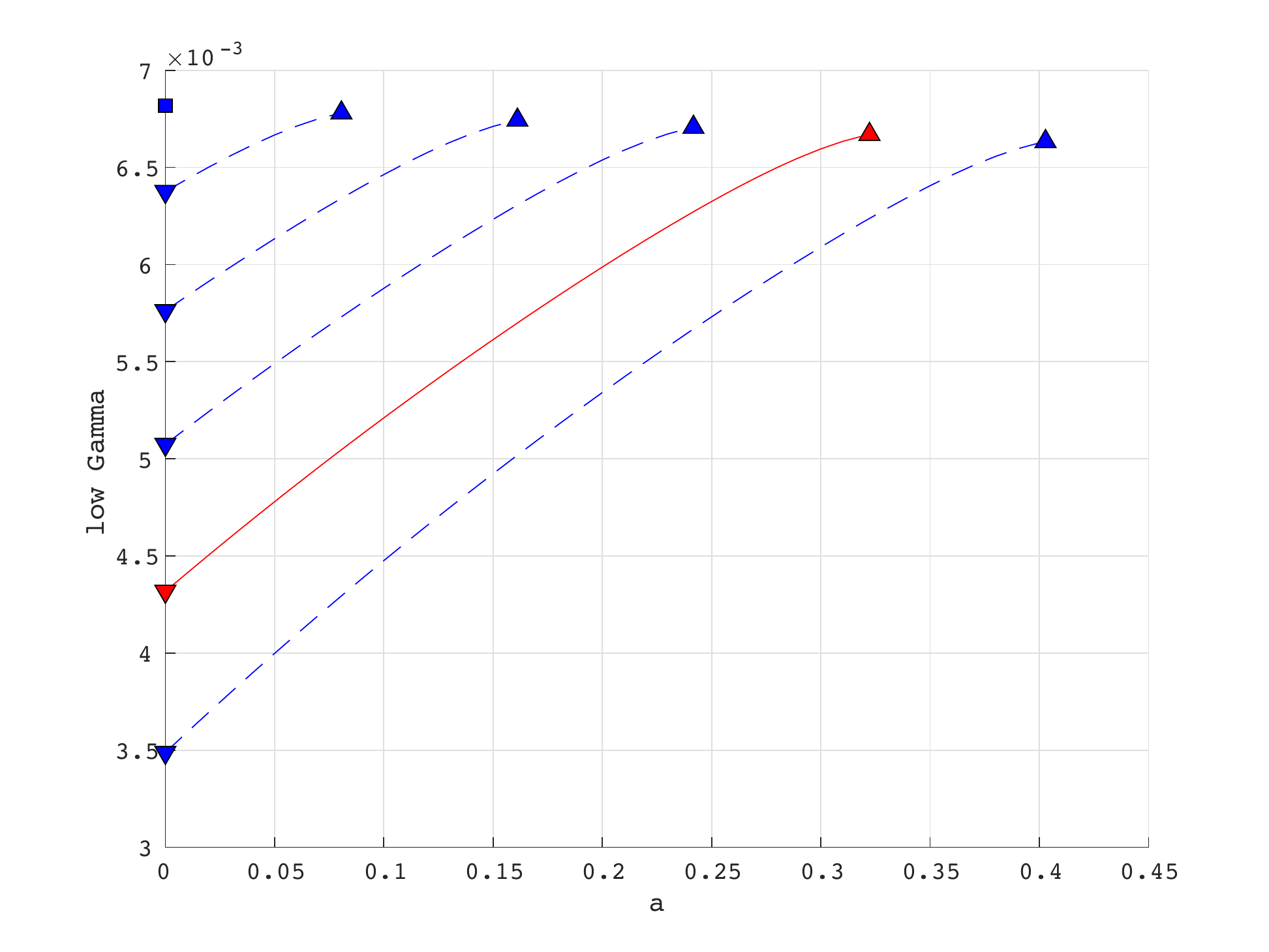}    \\  
\textbf{Case 1 }  ($\rho = 0$) & \textbf{Case 1 }  ($\rho = 5$) \end{tabular}
\end{minipage}
\caption{Plots of $a \mapsto \Gamma(a, b)$ (top) and $a \mapsto \gamma(a, b)$ (bottom)  for \textbf{Case 1} (left) and \textbf{Case 2} (right) for $b = 0, b^*/4, b^*/2, 3b^*/4, b^*, 5b^*/4$. 
The solid lines in red correspond to the cases for $b = b^*$.  For $b > 0$, the down- and up-pointing triangles indicate the points at 
$a = a(b)$ and $a=b$, respectively.  For $b = 0$, the points $(0, \Gamma(0,0))$ and $(0,\gamma(0,0))$ are indicated by squares.} \label{plot_gamma}
\end{center}
\end{figure}
\newpage
In Figure \ref{plot_value_function}, we plot the corresponding value functions $v_{a^*, b^*}$ along with suboptimal NPVs 
$v_{a,b}$  given in Lemma \ref{value_fun_explicit}  with $(a,b) \neq (a^*, b^*)$.  
It can be confirmed in both \textbf{Cases 1}  and \textbf{2} that $v_{a^*, b^*}$ dominates $v_{a,b}$ uniformly in $x$ for $(a,b) \neq (a^*, b^*)$.
  \begin{figure}[h!] %htbp
\begin{center}
\begin{minipage}{1.0\textwidth}
\centering
\begin{tabular}{cc}
    \includegraphics[scale=0.4]{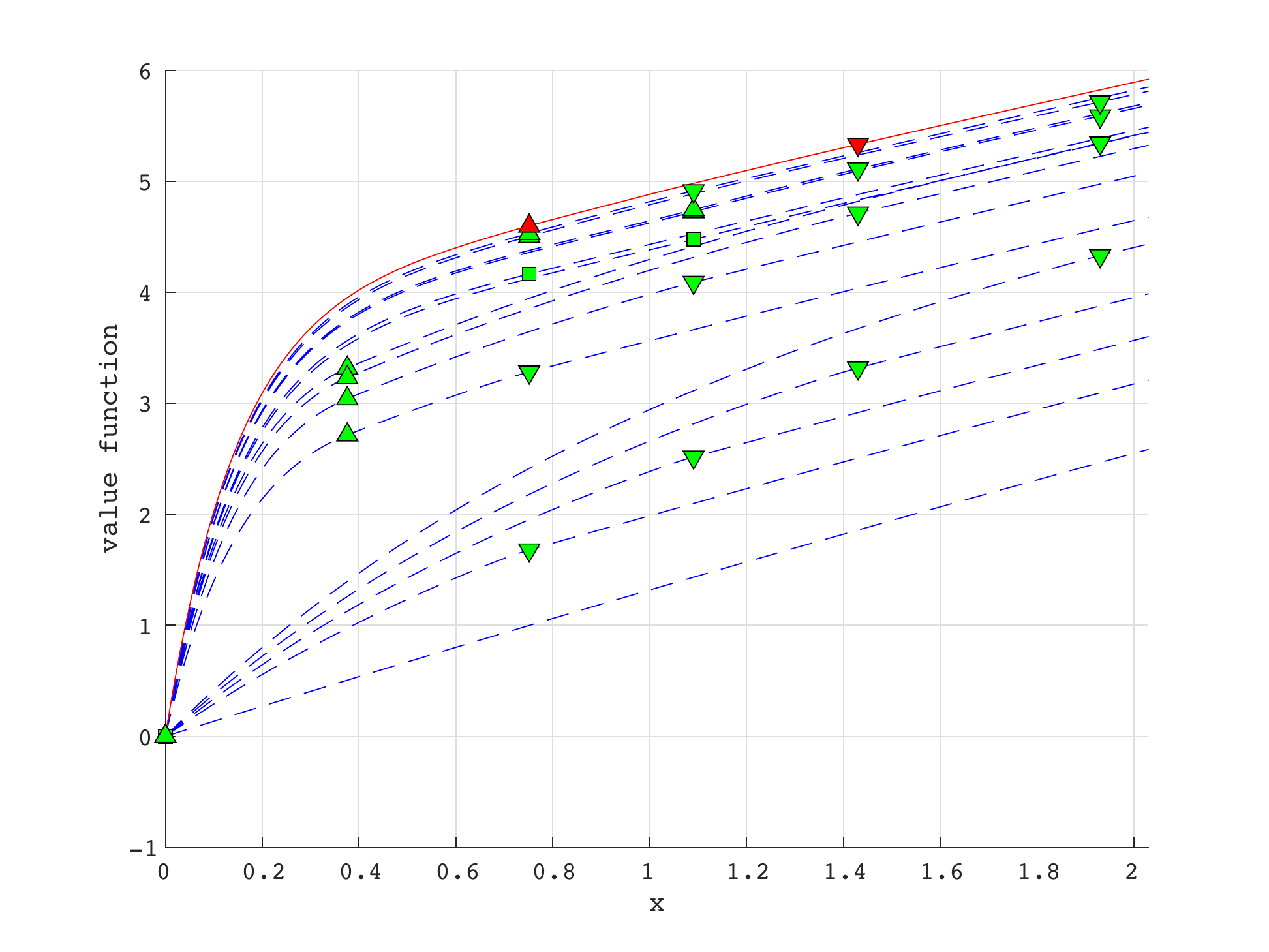} & \includegraphics[scale=0.4]{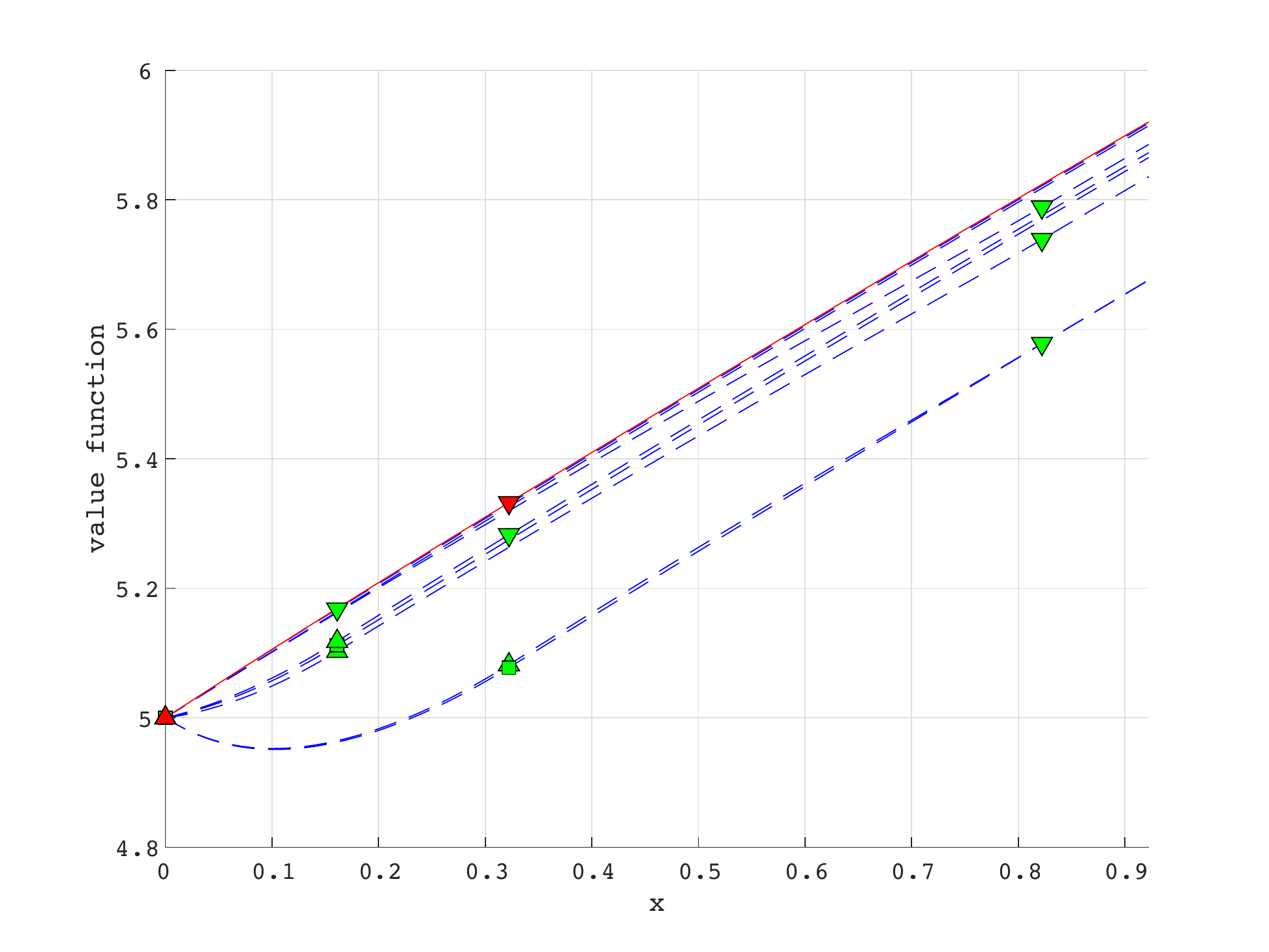} \\
\textbf{Case 1}  ($\rho = 0$) & \textbf{Case 1}  ($\rho = 5$)  \end{tabular}
\end{minipage}
\caption{Plots of the value functions $v_{a^*,b^*}$ (solid) along with the NPVs under suboptimal threshold strategies (dotted) with 
$(a, b) \neq (a^*, b^*)$. Suboptimal NPVs $v_{a,b}$ are plotted for the pairs of $a = 0,a^*/2, a^*, (a^*+b^*)/2$ and $b = 0, a^*, 
(a^*+b^*)/2, b^*, b^*+0.5$ for which $b \geq a$ for \textbf{Case 1}  and $a = 0,b^*/2, b^*$ and $b = 0, b^*/2, b^*,b^*+0.5, b^*, b^*+0.5$ 
for which $b \geq a$ for \textbf{Case 2}. The up- and down-pointing triangles indicate the values at $a^*/a$ and $b^*/b$, respectively.
%$a \mapsto \Gamma(a, b)$ (top) and $a \mapsto \gamma(a, b)$ (bottom)  for \textbf{Case 1} (left) and \textbf{Case 2} (right) for $b = b^*/5, 2b^*/5, \ldots, b^*, 6b^*/5$. The solid lines in red correspond to the cases for $b = b^*$.    The down- and up-pointing triangles indicate the points at $a = a(b)$ and $a=b$, respectively. 
} \label{plot_value_function}
\end{center}
\end{figure}

%\newpage
\subsection{Sensitivity and convergence}
We next study the behavior of the value function $v_{a^*, b^*}$ with respect to the parameters that describe the problem.  Here we use the same parameters as \textbf{Case 1} unless stated otherwise.

\subsubsection{With respect to $\rho$}

We first study the sensitivity with respect to the terminal payoff $\rho$.   In Figure \ref{plot_rho}, we plot $v_{a^*, b^*}$ along with the optimal thresholds $a^*$ and $b^*$ for $\rho$ ranging from $-6$ to $6$.
It is clear that the value function $v_{a^*, b^*}$ is monotonically increasing in $\rho$.  When $\rho$ is large ($\rho = 6$), we have $a^* = b^* = 0$ meaning that it is optimal to pay dividend as much as possible to enjoy quickly the terminal payoff.  For the case $\rho = 5$, we have $0 = a^* < b^*$ and, for the smaller values of $\rho$, we have $0 < a^* < b^*$.  We see that both $a^*$ and $b^*$ increase as $\rho$ decreases.

  \begin{figure}[h!]
\begin{center}
\begin{minipage}{1.0\textwidth}
\centering
\begin{tabular}{c}
 \includegraphics[scale=0.4]{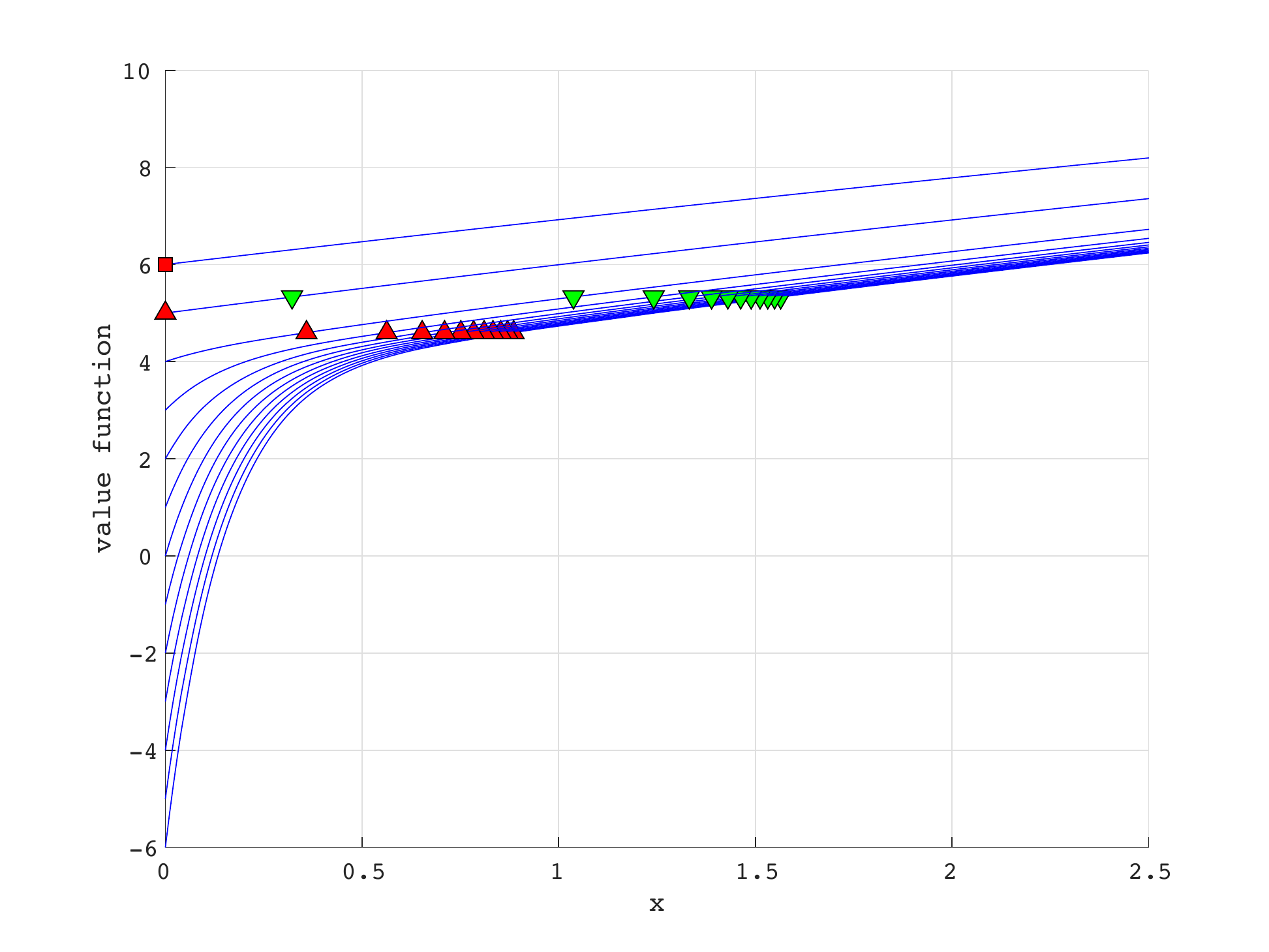}  \end{tabular}
\end{minipage}
\caption{Plots of the value function $v_{a^*,b^*}$ for $\rho = -6, -5, \ldots, 5, 6$. For the case $\rho = 6$, we have $a^*=b^* = 0$ and its value there is indicated by the square.  For other values of $\rho$, we have $0 \leq a^* < b^*$ and the points at $a^*$ and $b^*$ are indicated by the up- and down-pointing triangles, respectively.} \label{plot_rho}
\end{center}
\end{figure}
\newpage
\subsubsection{With respect to $\beta$}
We next analyze the behaviors with respect to the unit cost of capital injection $\beta$. 
As $\beta \uparrow \infty$, it becomes costly to inject capital and hence we expect that $a^* \downarrow 0$. Consequently, 
the value function approaches (when $\rho = 0$) to that of Yin et al.\ \cite{YSP}  with the absolutely continuous assumption without capital injections: 
its optimal strategy is of threshold-type with the threshold $b^*_{\infty, \delta_1, \delta_2,0}$ such that 
\begin{align*}
\delta_1 Z_1^{(q)}(b^*_{\infty, \delta_1, \delta_2,0})- \frac q {\Phi_0(q)} e^{\Phi_0(q) a}\Big( 1  +\delta_1 \Phi_0(q) \int_{0}^{b^*_{\infty, \delta_1, \delta_2,0}}Z_1^{(q)}(y) e^{-\Phi_0(q) y} \ud y \Big) = 0,
\end{align*}
%\blue{[Kazu: The formula above is valid only for $\rho=0$? 
%If this is the case given that in the rest $b^*_{\infty, \delta_1, \delta_2}$ is given for any value of $\rho$, do you think is worth 
%it to compute how the threshold looks in the general case to avoid confusion?]} 
%\red{[It should be very easy to extend, but I don't think we can do it in a succinct way 
%(we don't want to verify the optimality etc). We can just emphasize here that we are focusing on the case $\rho = 0$?]}\blue{[OK]}
%\green{So maybe to emphasize here that we are focusing on the case $\rho = 0$ we can change 
%from $b^*_{\infty, \delta_1, \delta_2}$ to $b^*_{\infty, \delta_1, \delta_2,0}$. What do you think?}\blue{[OK]}
%$\Gamma_{A,1}(b^*_{\infty, \delta_1, \delta_2})=0$ where
%\begin{align*}
%\Gamma_{A,i}(b) := \delta_1 Z_i^{(q)}(b)- \frac q {\Phi(q)} e^{\Phi(q) a}\Big( 1  +\delta_1 \Phi(q) \int_{0}^{b}Z_i^{(q)}(y) e^{-\Phi(q) y} \ud y \Big), \quad i = 1,2,
%\end{align*}
and the value function is given by 
\begin{align*} %\label{value_function_yin}
v(x; \infty, \delta_1, \delta_2,\rho=0) &= - e^{\Phi_0(q) (b^*_{\infty, \delta_1, \delta_2,0}-x)} 
\delta_1 \int_0^{b^*_{\infty, \delta_1, \delta_2,0}-x} W_1^{(q)} (z) e^{-\Phi_0(q) z} \diff z \\ 
&+ \frac {\delta_1} q Z_1^{(q)}(
b^*_{\infty, \delta_1, \delta_2,0}-x) - \frac {e^{\Phi_0(q)(b^*_{\infty, \delta_1, \delta_2,0}-x)}} {\Phi_0(q)}, \quad x \geq 0. 
\end{align*}

%$v(x; \infty, \delta_1, \delta_2) = v_1^A(x; b^*_{\infty, \delta_1, \delta_2})$ where
%\begin{equation} \label{value_function_yin}
%v^A_i(x; b) = - e^{\Phi(q) (b-x)} \delta_1 \int_0^{b-x} W_1^{(q)} (z) e^{-\Phi(q) z} \diff z + \frac {\delta_1} q Z_1^{(q)}(b-x) - \frac {e^{\Phi(q)(b-x)}} {\Phi(q)}, \quad x \geq 0. 
%\end{equation}

On the other hand, as $\beta \downarrow 1$  and $\rho=0$, the expected NPV of payoffs \eqref{v_pi} converges to 
(with $\tilde{r}^\pi := \delta_2 - r^\pi$) 

\begin{align*}
	\mathbb{E}_x \left( \int_0^{\kappa_0^{\pi}} e^{-q t} (\ell^{\pi}(t)- r^{\pi}(t))   \diff t \right)
	&= \mathbb{E}_x \left( \int_0^{\kappa_0^{\pi}} e^{-q t} (\ell^{\pi}(t) + \tilde{r}^{\pi}(t) - \delta_2)  \diff t   \right) \\
		&= \mathbb{E}_x \left( \int_0^{\kappa_0^{\pi}} e^{-q t} (\ell^{\pi}(t) + \tilde{r}^{\pi}(t))  \diff t  + \frac{\delta_2}{q}e^{-q \kappa_0^\pi}\right) - \frac{\delta_2}{q},
\end{align*}
%\blue{[Agreed.]}
%\begin{align*}
	%\mathbb{E}_x \left( \int_0^{\kappa_0^{\pi}} e^{-q t} (\ell^{\pi}(t)- r^{\pi}(t))   \diff t + \rho e^{-q \kappa_0^\pi} \right) 
	%&= \mathbb{E}_x \left( \int_0^{\kappa_0^{\pi}} e^{-q t} (\ell^{\pi}(t) + \tilde{r}^{\pi}(t) - \delta_2)  \diff t  + \rho e^{-q \kappa_0^\pi}  \right) \\
		%&= \mathbb{E}_x \left( \int_0^{\kappa_0^{\pi}} e^{-q t} (\ell^{\pi}(t) + \tilde{r}^{\pi}(t))  \diff t  + (\rho + \frac{\delta_2}{q}) e^{-q \kappa_0^\pi}\right) - \frac{\delta_2}{q},
%\end{align*}
%\blue{Kazu: Given that we are focusing in the case $\rho=0$ should we remove $\rho$ from the previous expression?}
%\green{Yes we should remove $\rho$, please look what I wrote above in green.} \red{[let me check one more time and delete $\rho$/}
with $0 \leq \ell^{\pi} + \tilde{r}^{\pi} \leq \delta_1 + \delta_2$.  
Its maximization corresponds to Yin et al.\ \cite{YSP} with an additional terminal payoff $\delta_2/q$. % \red{$\delta_2/q$?}. 
%\red{TODO: I will recheck the computation if I have divided by $q$]}\blue{[OK]}\green{Where exactly?}
 It is conjectured that the optimal strategy is of threshold-type. Consequently, we expect  that $(b^* - a^*) \rightarrow 0$ as $\beta \downarrow 1$.

%; it is easy to derive that  its optimal strategy is of threshold-type with the threshold $b^*_{1, \delta_1, \delta_2}$ such that $\Gamma_{A,2}(b^*_{1, \delta_1, \delta_2})=0$ %where
%\begin{align*}
%\Gamma_A(a) := \delta_1 Z_2^{(q)}(a)- \frac q {\Phi(q)} e^{\Phi(q) a}\Big( 1  +\delta_1 \Phi(q) \int_{0}^{a}Z_2^{(q)}(y) e^{-\Phi(q) y} \ud y \Big),
%\end{align*}
%and the value function is given by $v(\cdot; 1, \delta_1, \delta_2) = v_2^A(\cdot, b^*_{1, \delta_1, \delta_2})$.
%\begin{equation} \label{value_function_yin}
%v^A(x) = - e^{\Phi(q) (a_A-x)} \delta_1 \int_0^{a_A-x} W_2^{(q)} (z) e^{-\Phi(q) z} \diff z + \frac {\delta_1} q Z_2^{(q)}(a_A-x) - \frac {e^{\Phi(q)(a_A-x)}} {\Phi(q)}, \quad x \geq 0. 
%\end{equation}

In Figure \ref{plot_beta}, we plot $v_{a^*, b^*}$ for various values of $\beta$ along with the limiting cases $v(\cdot; \infty, \delta_1, \delta_2,\rho=0)$.  It is confirmed that the value function $v_{a^*, b^*}$ is 
monotonically decreasing in $\beta$.  
In addition, we see that, as $\beta \uparrow \infty$, $v_{a^*, b^*}$ converges decreasingly to $v(\cdot; \infty, \delta_1, \delta_2,\rho=0)$ with $a^* \downarrow 0$ and 
$b^* \uparrow b_{\infty, \delta_1, \delta_2,0}^*$. On the other hand, as $\beta \downarrow 1$, we see that both $a^*$ and $b^*$ converge to the same value.

  \begin{figure}[h!]%[htbp]
\begin{center}
\begin{minipage}{1.0\textwidth}
\centering
\begin{tabular}{c}
 \includegraphics[scale=0.4]{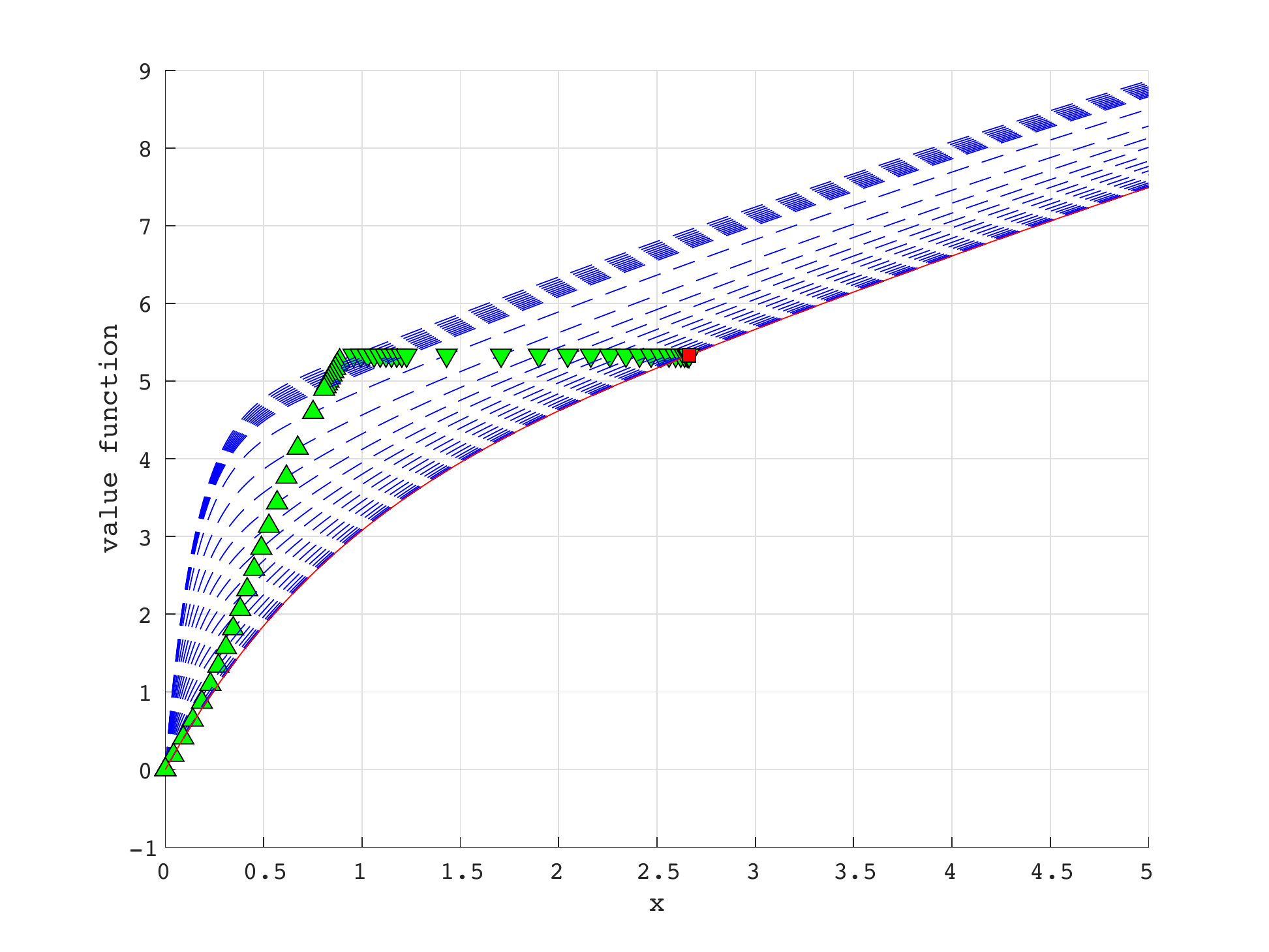}  \end{tabular}
\end{minipage}
\caption{Plots of the value function $v_{a^*,b^*}$ for $\beta = 1.01$, $1.02$, $\ldots$, $1.09$, $1.1$, $1.2$, $1.4$, $\ldots$, $4.8$, $5$ (dotted) along with $v(\cdot; \infty, \delta_1, \delta_2,\rho=0)$ 
(solid). The points at $a^*$ and $b^*$ are indicated by the up- and down-pointing triangles, respectively. The point at $b_{\infty, \delta_1, \delta_2,0}^*$ is indicated by a square.}  \label{plot_beta}
\end{center}
\end{figure}

\subsubsection{With respect to $\delta_2$}

We now analyze the behaviors with respect to $\delta_2$. 
The case $\delta_2 = 0$ corresponds to Yin et al.\ \cite{YSP} where, as reviewed above, its value function is 
$v(\cdot; \beta, \delta_1, 0,\rho=0) = v(\cdot; \infty, \delta_1, \delta_2,\rho=0)$ with the optimal barrier $b^*_{\beta, \delta_1, 0,0} = b^*_{\infty, \delta_1, \delta_2,0}$.  
%\green{Maybe again it will be good to write $v(\cdot; \infty, \delta_1, \delta_2,\rho=0)$ and $b^*_{\infty, \delta_1, \delta_2,0}$}\blue{[OK]}
On the other hand, as $\delta_2 \uparrow \infty$, there are two possible scenarios either to let it liquidate (\textbf{Case L}) 
or to reflect the surplus to avoid ruin (\textbf{Case R}). 
It is easy to see that \textbf{Case L} is equivalent to the case $\delta_2 = 0$; 
%\green{I don't understand this? When  $\delta_2 \uparrow \infty$ we take  $\delta_2 = 0$ ?? }
in this case, because $v_{a^*,b^*}$ is monotonically increasing %\blue{[Is it immediate that is decreasing in $\delta_2$? Because the ruin time can increase with larger values of $\delta_2$?]} 
in $\delta_2$, we have $v_{a^*, b^*} =  v(\cdot; \beta, \delta_1, 0,\rho=0)$ for any choice of $\delta_2$.
\textbf{Case R} corresponds to P\'erez and Yamazaki \cite{YP_RR_dual} with the absolutely continuous assumption on the dividend and classical capital injection: its value function is given by 
\begin{align*} %\label{value_function}
\begin{split}
v_R(x; \beta, \delta_1, \infty,\rho=0) &=  \delta_1  \frac {Z_1^{(q)}(b^{*,R}_{\beta, \delta_1, \infty,0}-x)} q -  e^{\Phi_0(q) b^{*,R}_{\beta, \delta_1, \infty,0}}  
\Big( \frac {e^{-\Phi_0(q)x}} {\Phi_0(q)}+\delta_1 l^{(q)}(x; b^{*,R}_{\beta, \delta_1, \infty,0}) 
%\int_x^{b^{*,R}_{\beta, \delta_1, \infty,0}}e^{-\Phi_0(q)(u-b^{*,R}_{\beta, \delta_1, \infty,0})}W_1^{(q)}(u-x)\diff u 
\Big),
\end{split}
\end{align*}
 with the optimal barrier $b^{*,R}_{\beta, \delta_1, \infty,0}$ given as the root of $e^{-\Phi_0(q)b}  \beta =1  +\delta \Phi_0(q) l^{(q)}(0; b)
% \int_0^b  e^{-\Phi_0(q) u} W_1^{(q)}(u) \diff u
 $. 
%\textbf{Case L} reduces to Yin et al.\ \cite{YSP} and $v_L(\cdot; \beta, \delta_1, \infty)  = v(\cdot; \beta, \delta_1, 0) = v(\cdot; \infty, \delta_1, \delta_2)$ with the optimal barrier $b^{*,L}_{\beta, \delta_1, \infty} = b^*_{\beta, \delta_1, 0} = b^*_{\infty, \delta_1, \delta_2}$.

The left plot of Figure \ref{plot_delta2} shows the convergence of $v_{a^*, b^*}$ for \textbf{Case R} 
(the used parameters are the same as \textbf{Case 1} ). Here, we plot $v_{a^*, b^*}$ for various values of $\delta_2$ along with the limiting case 
$v(x; \beta, \delta_1, 0,\rho=0)$ and $v_R(x; \beta, \delta_1, \infty,\rho=0)$. We see that, as $\delta_2 \downarrow 0$, $v_{a^*,b^*}$ converges decreasingly to $v(x; \beta, \delta_1, 0,\rho=0)$ with  
$b^* \uparrow b^*_{\beta, \delta_1, 0,0}$.  On the other hand, as $\delta_2 \uparrow \infty$, it converges increasingly to $v_R(x; \beta, \delta_1, \infty,\rho=0)$ with $a^* \downarrow 0$ and 
$b^* \downarrow b^{*,R}_{\beta, \delta_1, \infty,0}$.
The right plot of Figure \ref{plot_delta2} illustrates \textbf{Case L} (here we use the same parameters as \textbf{Case 1} except that $\beta = 6$) 
and shows $v_{a^*, b^*}(x) = v(x; \beta, \delta_1, 0,\rho=0)$ in comparison to  $v_R(x; \beta, \delta_1, \infty,\rho=0)$.%\blue{[Kazu: I agree that when $\delta_2\uparrow\infty$ the value function should increase to the refracted-reflected case. But in Figure 5 it looks like it is decreasing? Because the square is in the red lower curve?]} \blue{[Do we have numerical results for the actual convergence to Case L?]} \red{[I think in Case L, the value function is the same for all $\delta$.  The plot plots for multiple $\delta$'s but the solutions end up being the same.]}
%\blue{[Also Kazu, in the figure 5 Case L, the refracted-reflected startegy is optimal for the bail-out dividend problem over all non-decreasing and adapted strategies. And the case with $\delta_2$ is an admissible strategy for this problem. So I do not understand why in the figure with the square (lower one) corresponds the reflected-refracted strategy, it should be larger?}
  \begin{figure}[h!]%[htbp]
\begin{center}
\begin{minipage}{1.0\textwidth}
\centering
\begin{tabular}{cc}
 \includegraphics[scale=0.4]{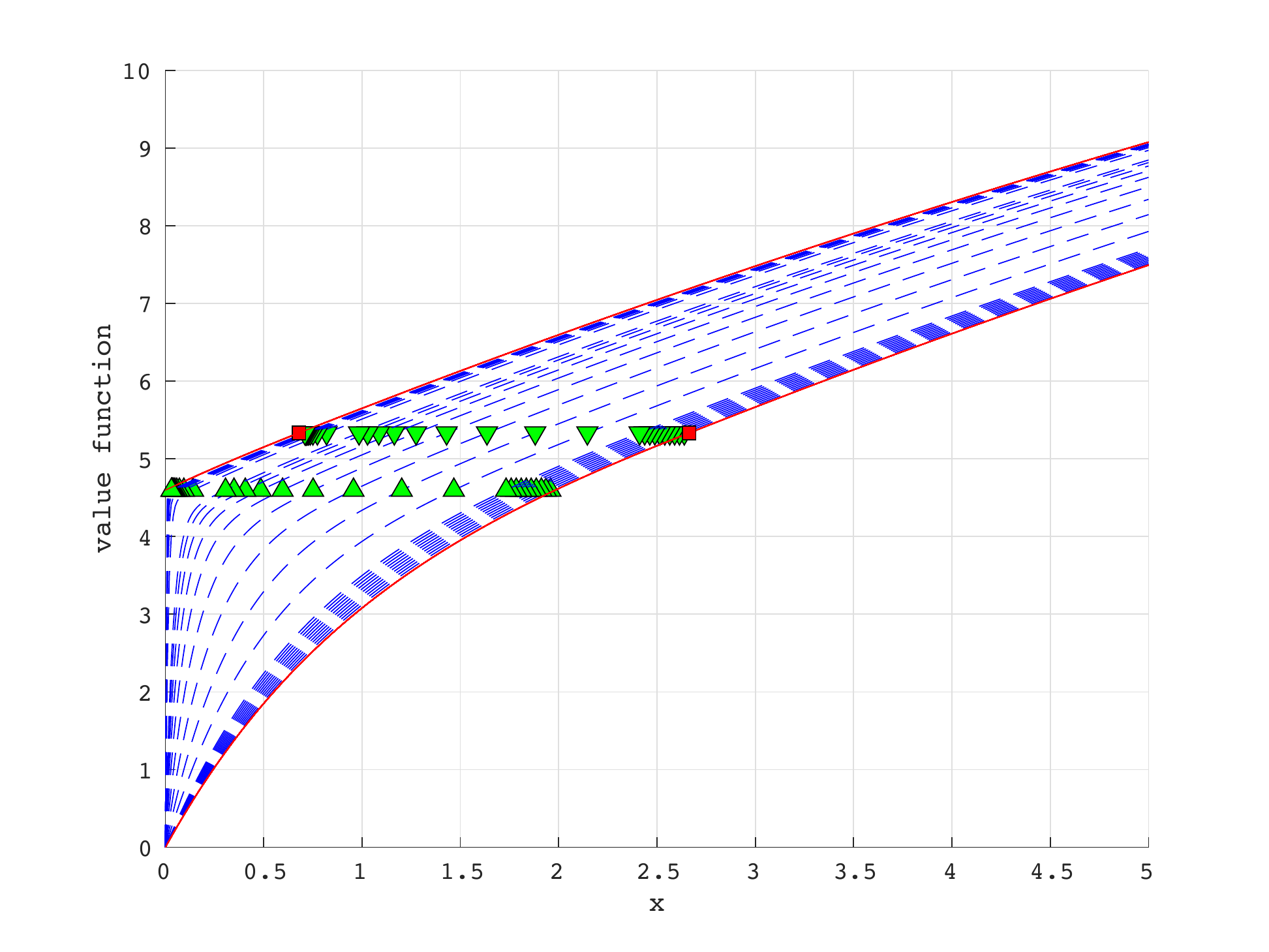} &  \includegraphics[scale=0.4]{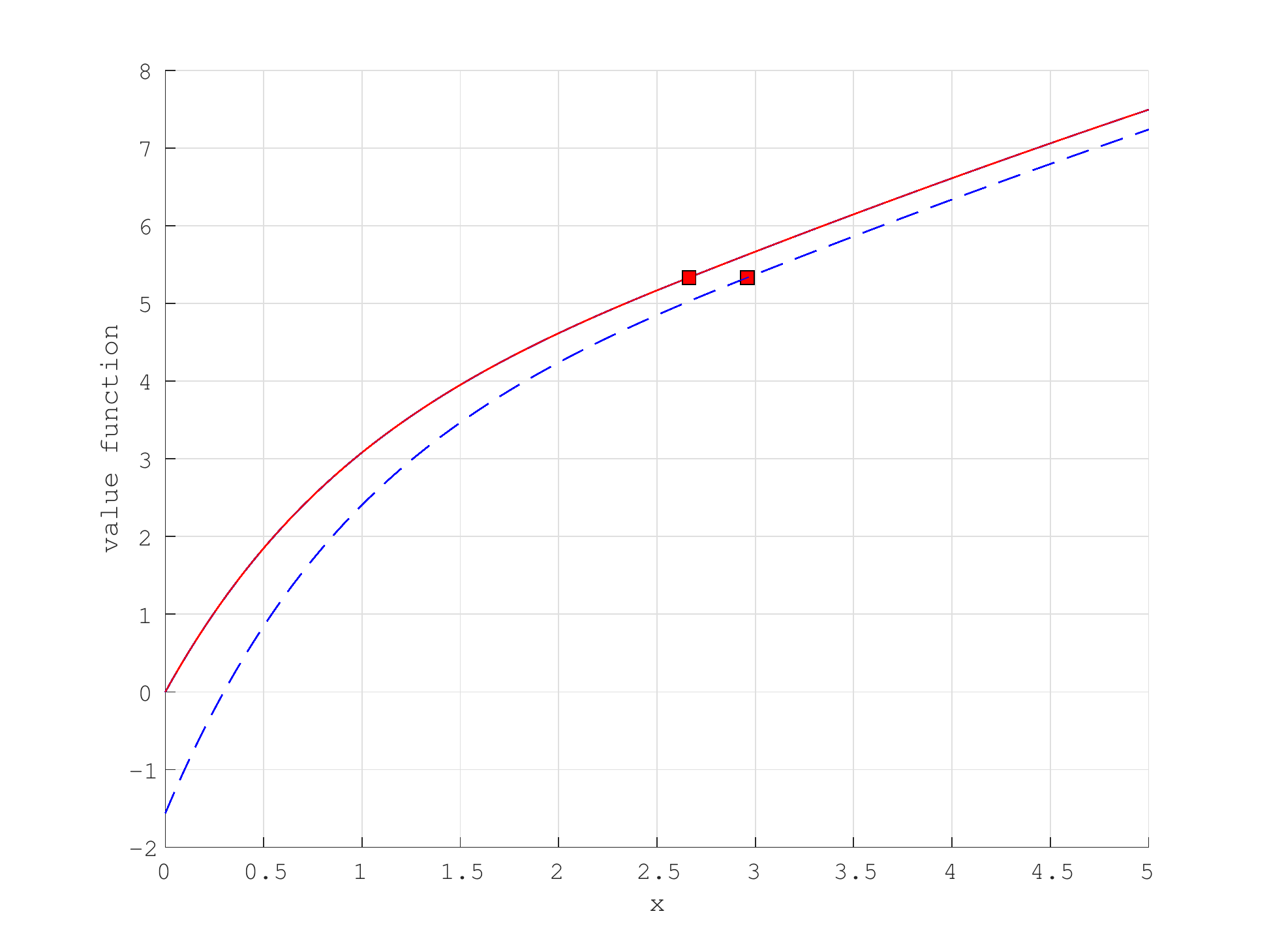}  \\
 Case R & Case L \end{tabular}
\end{minipage}
\caption{(Left) Plots of the value function $v_{a^*,b^*}$ for $\delta_2 = 0.01$, $0.02$, $\ldots$, $0.09$, $0.1$, $0.2$, $\ldots$, $0.9$, $1$, $2$, $\ldots$, $9$, $10$ along with the limiting cases 
$v(\cdot; \beta, \delta_1, 0,\rho=0)$ and $v_R(\cdot; \beta, \delta_1, \infty,\rho=0)$. 
%The parameters are the same as \textbf{Case} 1. 
The points at $a^*$ and $b^*$ are indicated by the up- and down-pointing triangles, respectively, and $b^*_{\beta, \delta_1, 0,0}$
 and $b^{*,R}_{\beta, \delta_1, \infty,0}$  are indicated by squares. (Right) Plots of $v_{a^*,b^*} = v(\cdot; \beta, \delta_1, 0,\rho=0)$ (solid)
 and $v_R(\cdot; \beta, \delta_1, \infty,\rho=0)$ (dotted). %The points at $a^* = 0$ and $b^*=b^*_{\beta, \delta_1, 0}$ are indicated by  
%the up- and down-pointing triangles, respectively, and that 
The points at $b^{*}_{\beta, \delta_1, 0,0}$ and $b^{*,R}_{\beta, \delta_1, \infty,0}$  are indicated by squares.}  \label{plot_delta2}
\end{center}
\end{figure}

\subsubsection{With respect to $\delta_1$}

For the analysis on $\delta_1$, we consider Problem 2 that we described in Section \ref{subsection_problem2} so as to analyze the convergence to the hybrid refracted-reflected case in Avanzi et al.\ \cite{APWY}.

Let $\tilde{v}_{\tilde{a}^*, \tilde{b}^*}$ be the value function for Problem 2 (which is equivalent to the maximal value of \eqref{problem_2_equivalent} with the optimal thresholds 
$(\tilde{a}^*, \tilde{b}^*)$).  For the case $\delta_1 = 0$, it reduces again to Yin et al.\ \cite{YSP}. 
The value function becomes $\beta$ times the one reviewed above with $\delta_1$ replaced with $\delta_2$: $\tilde{v}(\cdot; \beta, 0, \delta_2,\rho=0) = \beta v(\cdot; \beta, \delta_2, 0,\rho=0)$ 
with the optimal threshold $\tilde{b}^*_{\beta, 0, \delta_2,0} = b^*_{\beta, \delta_2, 0,0}$.
%
%where $\Gamma_{A,1}(b_1)=0$ where
%\begin{align*}
%\Gamma_{A,i}(b) := \delta_1 Z_i^{(q)}(b)- \frac q {\Phi(q)} e^{\Phi(q) a}\Big( 1  +\delta_1 \Phi(q) \int_{0}^{b}Z_i^{(q)}(y) e^{-\Phi(q) y} \ud y \Big), \quad i = 1,2,
%\end{align*}
%and the value function is given by $v_1^A(x; b_1)$ where
%\begin{equation} \label{value_function_yin}
%v^A_i(x; b) = - e^{\Phi(q) (b-x)} \delta_1 \int_0^{b-x} W_1^{(q)} (z) e^{-\Phi(q) z} \diff z + \frac {\delta_1} q Z_1^{(q)}(b-x) - \frac {e^{\Phi(q)(b-x)}} {\Phi(q)}, \quad x \geq 0. 
%\end{equation}
On the other hand, as $\delta_1 \uparrow \infty$, the problem gets closer to the problem considered in Avanzi et al.\ \cite{APWY} 
where a strategy is a combination of the absolutely continuous and singular control with transaction costs $\beta$ and $1$, respectively.  
We refer the reader to Avanzi et al.\ \cite{APWY} for the form of the value function $\tilde{v}(\cdot; \beta, \infty, \delta_2,\rho=0)$ 
and the characterization of the optimal barriers $(\tilde{a}^*_{\beta, \infty, \delta_2,0},\tilde{b}^*_{\beta, \infty, \delta_2,0})$.
%\green{Maybe again it will be good to write $\tilde{v}(\cdot; \beta, \infty, \delta_2,\rho=0)$ and $\tilde{b}^*_{\beta, \infty, \delta_2,0}$}\blue{[OK]}

In Figure \ref{plot_delta1}, we plot $\tilde{v}_{\tilde{a}^*, \tilde{b}^*}$ for various values of $\delta_1$ along with $\tilde{v}(x; \beta, 0, \delta_2,\rho=0)$ and $\tilde{v}(x; \beta, \infty, \delta_2,\rho=0)$. It is confirmed that $\tilde{v}_{\tilde{a}^*,\tilde{b}^*}$ is increasing in $\delta_1$.
We see that, as $\delta_1 \downarrow 0$, the value function converges decreasingly to the limiting case $\tilde{v}(x; \beta, 0, \delta_2,\rho=0)$ with  $\tilde{a}^* \uparrow \tilde{b}^*_{\beta, 0, \delta_2,0}$. 
%\blue{[Kazu: I think you named it $\tilde{b}^*_{\beta, 0, \delta_2}$ instead of $\tilde{a}^*_{\beta, 0, \delta_2}$ before]}. 
 On the other hand, as $\delta_1 \uparrow \infty$, the value function converges increasingly to $\tilde{v}(x; \beta, \infty, \delta_2,\rho=0)$ with 
$\tilde{a}^* \rightarrow \tilde{a}^*_{\beta, \infty, \delta_2,0}$ and $\tilde{b}^* \rightarrow \tilde{b}^*_{\beta, \infty, \delta_2,0}$.

  \begin{figure}[h!]%[htbp]
\begin{center}
\begin{minipage}{1.0\textwidth}
\centering
\begin{tabular}{c}
 \includegraphics[scale=0.4]{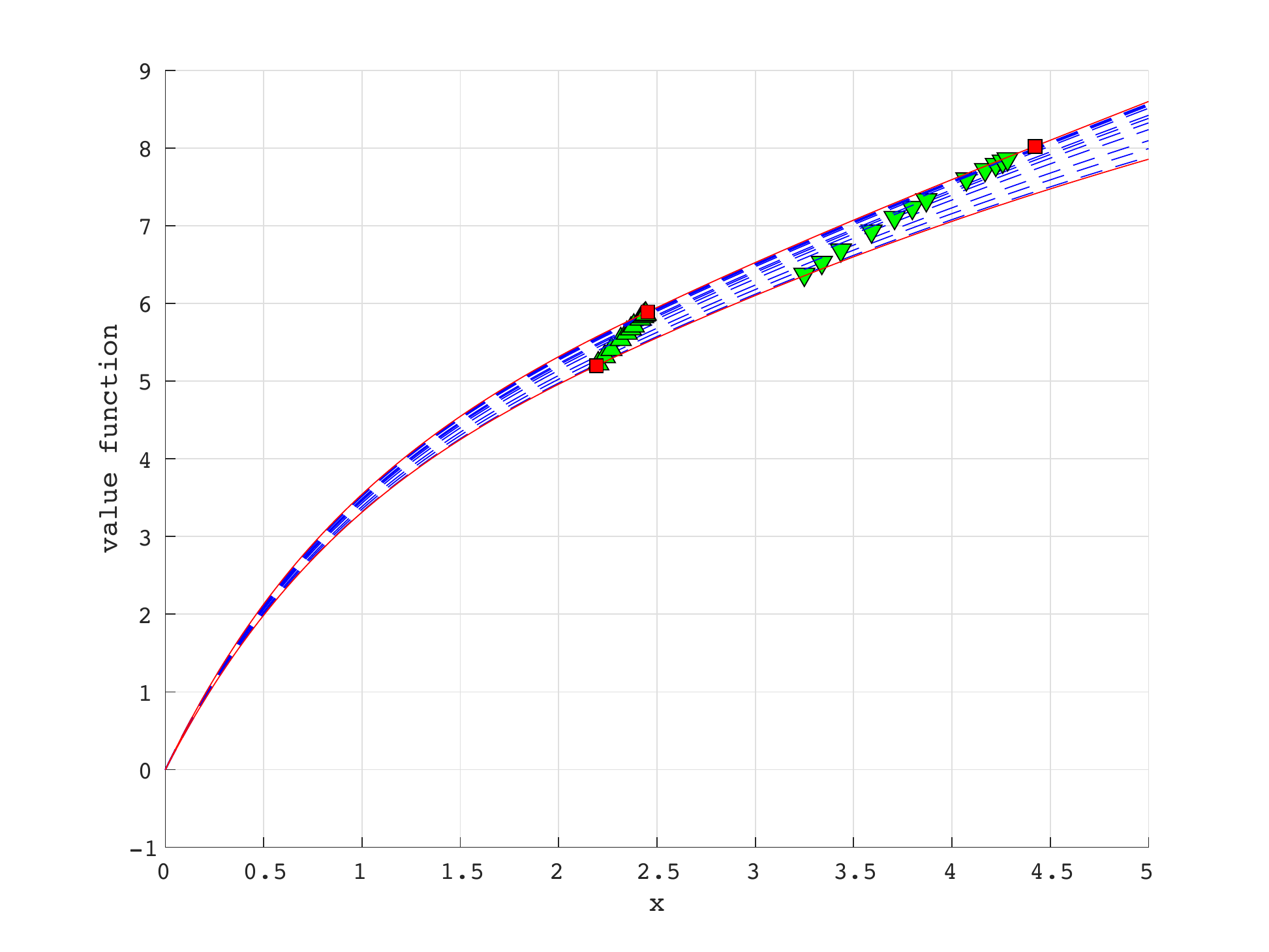}  \end{tabular}
\end{minipage}
\caption{Plots of the value function $\tilde{v}_{a^*,b^*}$ for $\delta_1 = 0.01$, $0.05$, $0.1$, $0.2$, $0.3$, $0.4$, $0.5, 1.0, \ldots, 2.5,3$ (dotted) along with 
$\tilde{v}(x; \beta, 0, \delta_2,\rho=0)$ and $\tilde{v}(x; \beta, \infty, \delta_2,\rho=0)$ (solid). 
The points at $\tilde{a}^*$ and $\tilde{b}^*$ are indicated by the up- and down-pointing triangles, respectively. The points at $\tilde{a}_{\beta, 0, \delta_2,0}^*$, 
$\tilde{a}_{\beta, \infty, \delta_2,0}^*$, and $\tilde{b}_{\beta, \infty, \delta_2,0}^*$ are indicated by squares.} \label{plot_delta1}
\end{center}
\end{figure}

%Recall
%\begin{align*}
%	v_{\pi} (x) := \mathbb{E}_x \left( \int_0^{\kappa_0^{\pi}} e^{-q t} \ell^{\pi}(t)  \diff t - \beta \int_0^{\kappa_0^{\pi}} e^{-q t} r^{\pi}(t)+ \rho e^{- q \kappa_0^{\pi}}  \diff t\right), \quad x \geq 0
%\end{align*}
%
%
%As $\beta$ increases to $\infty$, one does not want to use capital injection ($a^* \downarrow 0$). Hence, it converges to Bayraktar et al. (1).
%
%On the other hand, as $\beta$ decrease to $1$, the payoff goes to
%\begin{align*}
%	v_{\pi} (x) := \mathbb{E}_x \left( \int_0^{\kappa_0^{\pi}} e^{-q t} (\ell^{\pi}(t) - r^{\pi}(t))  \diff t+ \rho e^{- q \kappa_0^{\pi}}  \diff t\right), \quad x \geq 0
%\end{align*}
\begin{appendix}

\section{Proof of Lemma \ref{value_fun_explicit}} \label{appendix_proof_value_fun_explicit}
Throughout this Appendix, we fix $0 \leq a < b$.
%\red{[Put back the definitions here]} 
Let us define, for $y \in \R$, %\blue{[I will write it like this so we dont have to change the rest of the paper]
		%\begin{align} \label{def_u}
		%	\begin{split}
		%		\tilde{u}_1^{(q)}(y) &:= e^{\Phi(q) y} + \delta_1 \Phi(q) \int_{0}^{y} e^{\Phi(q) z} W_1^{(q)} (y-z) \diff z, \\
		%		\tilde{u}_2^{(q)}(y) &:= \tilde{u}_{1}^{(q)}(y)+ \delta_2 \int_{b-a}^y W_2^{(q)}(y-z) \tilde{u}^{(q)\prime}_{1} (z) \diff z,
		%	\end{split}
		%\end{align}
		%and
		%\begin{align*}
		%	\tilde{w}_1^{(q)}(y; d) &:= W^{(q)} (y-d) + \delta_1 \int_{0}^{y} W_1^{(q)} (y-u) W^{(q)\prime} (u-d) \diff u, \\
		%	\tilde{w}_2^{(q)}(y;d)&:=\tilde{w}_1^{(q)}(y;d)+\delta_2\int_{b-a}^{y}W_2^{(q)}(y-u)\tilde{w}_1^{(q)}(u;d) \diff u.
		%\end{align*}
		%Then it is not difficult to see that $\tilde{u}^{(q)}_i(b+z)=e^{\Phi(q)b}u^{(q)}_i(z)$, and $\tilde{w}^{(q)}_i(z+b,d+b)=u^{(q)}_i(z;d)$ for $i=1,2$, $x\in\R$, and $z\geq d$. 
		%Where}
%\begin{align*}
%	g^{(q)}_k (x) := u_k^{(q)} (-x), \quad k = 1,2,
%	 \end{align*}
%	 with
	\begin{align} \label{def_u}
	\begin{split}
	u_1^{(q)}(y) &:= e^{\Phi_0(q) y} + \delta_1 \Phi_0(q) l^{(q)}(-y;b), \\
	% \int_{-b}^{y} e^{\Phi_0(q) z} W_1^{(q)} (y-z) \diff z, \\
	u_2^{(q)}(y) &:= u_{1}^{(q)}(y)+ \delta_2 \int_{-a}^y W_2^{(q)}(y-z) u^{(q)\prime}_{1} (z) \diff z,
	\end{split}
	\end{align}
	and, for $d \geq 0$,
	 \begin{align}\label{def_w}
	 	\begin{split}
	 w_1^{(q)}(y; d) &:= W_0^{(q)} (y-d) + \delta_1 \int_{-b}^{y} W_1^{(q)} (y-u) W^{(q)\prime}_0 (u-d) \diff u, \\
		w_2^{(q)}(y;d)&:=w_1^{(q)}(y;d)+\delta_2\int_{-a}^{y}W_2^{(q)}(y-u)w_1^{(q)\prime}(u;d) \diff u.
	\end{split}
	 \end{align}

%\red{[Changed the order of arguments.]} 
Let $\tilde{V}^{-b,-a} := - V^{a,b}$ as in \eqref{V_a_b_SDE} and $\tilde{\kappa}_0^+ := \inf \{ t > 0: \tilde{V}^{-b,-a}(t) > 0 \}$. %be the upcrossing time of \green{$\tilde{V}^{-b,-a}$} above $0$.

First, by Theorem 3 in \cite{CPRY}, we have %\blue{
\begin{align} \label{upcrossing_laplace}
\E_x\left(e^{-q\kappa_0^-}\right) = \E_{-x}\left(e^{-q \tilde{\kappa}_0^+} \right) %=\frac{\tilde{u}_2^{(q)}(b-x)}{\tilde{u}_2^{(q)}(b)}
=\frac{u_2^{(q)}(-x)}{u_2^{(q)}(0)}, \quad x \geq 0.
\end{align}%}

%\red{[combined two equations below] 
%\blue{
For the expected NPV of dividends,  Theorem 4 in \cite{CPRY} and \eqref{upcrossing_laplace} give
\begin{align*}
	\mathbb{E}_x &\left( \int_0^{\kappa_0^-} e^{-q t} \diff L_t^{a,b}\right)=
	\mathbb{E}_{x} \left( \int_0^{\kappa_0^-} e^{-q t} \delta_11_{\{V^{a,b}_t \geq b\}}\diff t\right)=
	\mathbb{E}_{-x} \left( \int_0^{\tilde{\kappa}_0^+} e^{-q t} \delta_11_{\{\tilde{V}^{-b,-a}_t \leq -b\}} \diff t\right)\\
	&=\frac{\delta_1}{q}\left(1-\E_{-x}\left(e^{-q \tilde{\kappa}_0^+};\tilde{\kappa}_0^+<\infty\right) \right)-
	\delta_1\mathbb{E}_{-x} \left( \int_0^{\tilde{\kappa}_0^+} e^{-q t}1_{\{\tilde{V}^{-b,-a}_t>-b\}} \diff t\right) \\
%\end{align*}
%%\red{JL, is the last one supposed to be?
%%\begin{align*}
%%	&=\frac{\delta_1}{q}\left(1-\E_{-x}\left[e^{-q\kappa_0^+};\kappa_0^+<\infty\right]\right)-\delta_1\mathbb{E}_{-x} \left( \int_0^{\kappa_0^+} e^{-q t}1_{\{\tilde{V}^{-b,-a}_t>-b\}}dt\right).
%%	\end{align*}
%%}
%. By , we obtain that %\red{[for now use $u_2$ and later show $g_{a,b}(x) = u_2(-x)$]}
%\begin{align*}
	%\mathbb{E}_x \left( \int_0^{\kappa_0^-} e^{-q t} \diff L_t^{a,b}\right)
	%&=\frac{\delta_1}{q}\left(1-\frac{\tilde{u}_2^{(q)}(b-x)}{\tilde{u}_2^{(q)}(b)}\right)-\frac{\delta_1}{\left(1-\delta_1 W^{(q)}(0)\right)}\int_{0}^{b-a}\left\{\frac{\tilde{u}_2^{(q)}(b-x)}{\tilde{u}_2^{(q)}(b)} \tilde{w}_2^{(q)}(b;y)- \tilde{w}_2^{(q)}(b-x;y)\right\} \diff y\\
	%&-\frac{\delta_1}{{\prod_{j=1}^2\left(1-\delta_j W_{j-1}^{(q)}(0)\right)}}\int_{b-a}^{b}\left\{\frac{\tilde{u}_2^{(q)}(b-x)}{\tilde{u}_2^{(q)}(b)}\tilde{w}_2^{(q)}(b;y)-\tilde{w}_2^{(q)}(b-x;y)\right\} \diff y\\
	&=\frac{\delta_1}{q}\left(1-\frac{u_2^{(q)}(-x)}{u_2^{(q)}(0)}\right)-\frac{\delta_1}{1-\delta_1 W^{(q)}_0(0)}\int_{-b}^{-a}\left\{\frac{u_2^{(q)}(-x)}{u_2^{(q)}(0)} w_2^{(q)}(0;y)- w_2^{(q)}(-x;y)\right\} \diff y\\
	&-\frac{\delta_1}{{\prod_{j=1}^2\left(1-\delta_j W_{j-1}^{(q)}(0)\right)}}\int_{-a}^{0}\left\{\frac{u_2^{(q)}(-x)}{u_2^{(q)}(0)}w_2^{(q)}(0;y)-w_2^{(q)}(-x;y)\right\} \diff y.
\end{align*}%}
%\red{JL: I am confused with the indexing in Corollary 9 in \cite{CPRY} ($k$ and $i$. Is the above and also for $R$ correct?)}
%where $\delta_1:=\delta_1$ and $\delta_2:=\delta_2$. \red{[$\delta_1$ and $\delta_2$ not used. remove this?]}
%	Here\blue{
%	\begin{align*}
%	u_k^{(q)}(x) &= u_{k-1}^{(q)}(x)+\delta_2 \int_{-b}^\infty W_k^{(q)}(x-z) u^{(q)\prime}_{k-1} (z) \diff z.
%	\end{align*}}
\par On the other hand, for the expected NPV of capital injections, by Theorem 4 in \cite{CPRY}, 
\begin{align*}
	\mathbb{E}_x \left( \int_0^{\kappa_0^-} e^{-q t} \diff R_t^{a,b}\right)&=\mathbb{E}_x \left( \int_0^{\kappa_0^-} e^{-q t} 
	\delta_21_{\{V^{a,b}_t < a\}}\diff t\right)
	=\mathbb{E}_{-x} \left( \int_0^{\tilde{\kappa}_0^+} e^{-q t} \delta_21_{\{\tilde{V}^{-b,-a}_t> -a \}} \diff t \right) \\
%\end{align*}
%So by Corollary 9 in \cite{CPRY} we obtain that
%\begin{align*}
	%\mathbb{E}_x \left( \int_0^{\kappa_0^-} e^{-q t} \diff R_t^{a,b}\right)
%	&=\frac{\delta_2}{{\prod_{j=1}^2\left(1-\delta_j W_{j-1}^{(q)}(0)\right)}}\int_{b-a}^{b} \left\{\frac{\tilde{u}_2^{(q)}(b-x)}{\tilde{u}_2^{(q)}(b)}\tilde{w}_2^{(q)}(b;y)-\tilde{w}_2^{(q)}(b-x;y)\right\}\diff y\\
	&=\frac{\delta_2}{{\prod_{j=1}^2\left(1-\delta_j W_{j-1}^{(q)}(0)\right)}}\int_{-a}^{0} \left\{\frac{u_2^{(q)}(-x)}{u_2^{(q)}(0)}w_2^{(q)}(0;y)-w_2^{(q)}(-x;y)\right\}\diff y.
\end{align*}

Combining these, the expected NPV \eqref{v_pi}, for $x \geq 0$, becomes
\begin{align}\label{vf_2}
\begin{split}
	v_{a,b}(x)&=\frac{\delta_1}{q}-\frac{\delta_1}{1-\delta_1 W^{(q)}_0(0)}\int_{-b}^{-a}\left\{\frac{u_2^{(q)}(-x)}{u_2^{(q)}(0)}w_2^{(q)}(0;y)-w_2^{(q)}(-x;y) \right\} 
	\diff y \\&-\frac{(\delta_1+\beta\delta_2)}{{\prod_{j=1}^2\left(1-\delta_j W_{j-1}^{(q)}(0)\right)}}\int_{-a}^{0} \left\{\frac{u_2^{(q)}(-x)}{u_2^{(q)}(0)}w_2^{(q)}(0;y)-w_2^{(q)}(-x;y)\right\}\diff y
	-\left(\frac{\delta_1}{q}-\rho\right)\frac{u_2^{(q)}(-x)}{u_2^{(q)}(0)} \\
	&=- \frac {u_2^{(q)}(-x)}{u_2^{(q)}(0)} (\tilde{f}^{(q)}_{a,b}(0)-\rho) +  \tilde{f}^{(q)}_{a,b}(-x)	\end{split}
	\end{align}
	where
	\begin{align} \label{def_f_tilde}
	\tilde{f}^{(q)}_{a,b}(z) :=  \frac{\delta_1}{q}+ \frac{\delta_1 \int_{-b}^{-a} w_2^{(q)}(z;y)  \diff y }{1-\delta_1 W_0^{(q)}(0)} +\frac{(\delta_1+\beta\delta_2) \int_{-a}^{0} w_2^{(q)}(z;y) \diff y}
	{{\prod_{j=1}^2\left(1-\delta_j W_{j-1}^{(q)}(0)\right)}}, \quad z \leq 0.
	\end{align}
	%\red{[should we change from $\tilde{f}$ to $\blue{\tilde{f}^{(q)}}$?]}

In view of \eqref{vf_2},  the proof of  Lemma \ref{value_fun_explicit} is complete  once we show Lemmas \ref{lemma_u} and \ref{lemma_f} below.

%First, we shall show that $u^{(q)}_2$ simplifies to \eqref{u_simplified}.
\begin{lemma} \label{lemma_u} For $x \geq 0$, we have
%\begin{align*}
	$u_2^{(q)}(-x) = g_{a,b}^{(q)}(x)$.
	%&= e^{-\Phi(q) x} + \delta_1 \Phi(q) \int_{-b}^{-x} e^{\Phi(q) z} W_1^{(q)} (-x-z) \diff z \\
	%&+	  \delta_2 \Phi(q) \int_{-a}^{-x} W_2^{(q)}(-x-z)  \Big[ e^{\Phi(q) z} + \delta_1 \Big[ e^{- \Phi(q)b} W_1^{(q)} (z+b) + \Phi(q) \int_{-b}^z W_1^{(q)} (z-u) e^{\Phi(q) u} \diff u\Big] \Big] \diff z \\
	%&= e^{-\Phi(q) x} + \delta_1 \Phi(q) \int^{b}_{x} e^{-\Phi(q) z} W_1^{(q)} (z-x) \diff z \\
	%&+	  \delta_2 \Phi(q) \int^{a}_{x} W_2^{(q)}(z-x)  \Big[ e^{-\Phi(q) z} + \delta_1 \Big[ e^{- \Phi(q)b} W_1^{(q)} (b-z) + \Phi(q) \int^{b}_{z} W_1^{(q)} (u-z) e^{-\Phi(q) u} \diff u\Big] \Big] \diff z.
%\end{align*}
%\red{[added this]
%	In particular,
%	\begin{align*}
%		g_{a,b}^{(q)}(0) 
%		&= 1 + \delta_1 \Phi(q) \int^{b}_{0} e^{-\Phi(q) z} W_1^{(q)} (z) \diff z \\
%		&+	  \delta_2 \Phi(q) \int^{a}_{0} W_2^{(q)}(z)  \Big[ e^{-\Phi(q) z} + \delta_1 \Big[ e^{- \Phi(q)b} W_1^{(q)} (b-z) + \Phi(q) \int^{b}_{z} W_1^{(q)} (u-z) e^{-\Phi(q) u} \diff u\Big] \Big] \diff z.
%\end{align*}}
\end{lemma}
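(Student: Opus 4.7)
The strategy is to verify the identity by direct computation, using the substitution $z = -w$ to match the integrals. The definitions are compatible under this reflection, so essentially no analytic work beyond a change of variables and differentiation of $u_1^{(q)}$ is required.

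First I would evaluate $u_1^{(q)}(-x)$ by substituting $y = -x$ directly into the defining formula in \eqref{def_u}:
\begin{align*}
u_1^{(q)}(-x) = e^{-\Phi_0(q) x} + \delta_1 \Phi_0(q)\, l^{(q)}(x;b),
\end{align*}
which matches the first two summands in the definition \eqref{u_simplified} of $g_{a,b}^{(q)}(x)$. Next I would compute $u_1^{(q)\prime}$ by differentiating its defining expression with respect to $y$; the chain rule applied to $l^{(q)}(-y; b)$ produces a factor of $-1$, giving
\begin{align*}
u_1^{(q)\prime}(y) = \Phi_0(q)\, e^{\Phi_0(q) y} - \delta_1 \Phi_0(q)\, l^{(q)\prime}(-y;b).
\end{align*}
Evaluating at $y = -w$ yields
\begin{align*}
u_1^{(q)\prime}(-w) = \Phi_0(q)\bigl(e^{-\Phi_0(q) w} - \delta_1\, l^{(q)\prime}(w;b)\bigr).
\end{align*}

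Then I would apply the change of variables $z = -w$ (so $\diff z = -\diff w$; the endpoints $z = -a, -x$ correspond to $w = a, x$) to the integral term in $u_2^{(q)}(-x)$:
\begin{align*}
\delta_2 \int_{-a}^{-x} W_2^{(q)}(-x-z)\, u_1^{(q)\prime}(z) \diff z = \delta_2 \int_{x}^{a} W_2^{(q)}(w-x)\, u_1^{(q)\prime}(-w)\diff w,
\end{align*}
where the reversal of limits absorbs the minus sign from $\diff z$. Substituting the expression for $u_1^{(q)\prime}(-w)$ obtained above gives
\begin{align*}
\Phi_0(q)\, \delta_2 \int_{x}^{a} W_2^{(q)}(w-x)\bigl(e^{-\Phi_0(q) w} - \delta_1\, l^{(q)\prime}(w;b)\bigr) \diff w,
\end{align*}
which is precisely the remaining term of $g_{a,b}^{(q)}(x)$ in \eqref{u_simplified}. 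Summing the two pieces yields $u_2^{(q)}(-x) = g_{a,b}^{(q)}(x)$.

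The only mild subtlety, and hence the only ``obstacle,'' is checking that the change of variables is valid regardless of whether $x \leq a$ or $x > a$, since in the latter case both $\int_{-a}^{-x}$ and $\int_{x}^{a}$ are signed integrals with reversed orientation; this is automatic from the convention $\int_\alpha^\beta = -\int_\beta^\alpha$, so no case split is actually needed. Everything else is formal algebra.
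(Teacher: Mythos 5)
Your proof is correct and is essentially the paper's own argument spelled out in full: the paper's proof consists precisely of computing $u_1^{(q)\prime}(y) = \Phi_0(q) e^{\Phi_0(q) y} - \delta_1 \Phi_0(q) l^{(q)\prime}(-y;b)$, substituting into \eqref{def_u}, and changing variables $z \mapsto -z$. Your extra remark about the signed-integral convention when $x > a$ is harmless and does not change anything.
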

 \begin{proof} %\red{[simplified substantially.]} 
%Integration by parts gives
%\begin{align*}
%	u_1^{(q)}(y) &= e^{\Phi(q) y} + \delta_1 \Phi(q) \red{l(-y;b)}.
%	%\int_{-b}^{y} e^{\Phi(q) z} W_1^{(q)} (y-z) \diff z
%	% \\
%	%&= e^{\Phi(q) y} + \delta_1 \Phi(q) \Big[ e^{- \Phi(q)b} \overline{W}_1^{(q)} (y+b) + \Phi(q) \int_{-b}^y \overline{W}_1^{(q)} (y-z) e^{\Phi(q) z} \diff z\Big].
%\end{align*}
%We have
%	\begin{align*}
%		g_1^{(q)}(x) 
%		&= e^{-\Phi(q) x} + \delta_1 \Phi(q) \Big[ e^{- \Phi(q)b} \overline{W}_1^{(q)} (b-x) + \Phi(q) \int^{b}_{x} \overline{W}_1^{(q)} (-x+z) e^{-\Phi(q) z} \diff z\Big], \\
%		g_1^{(q)\prime}(x)  
%		&= - \Phi(q) e^{-\Phi(q) x} - \delta_1 \Phi(q) \Big[ e^{- \Phi(q)b} W_1^{(q)} (b-x) + \Phi(q) \int^{b}_{x} W_1^{(q)} (-x+z) e^{-\Phi(q) z} \diff z\Big].
%	\end{align*}
%	In particular,
%	\begin{align*}
%		u_1^{(q)}(0) &= 1 + \delta_1 \Phi(q) \int^{b}_{0} e^{-\Phi(q) z} W_1^{(q)} (z) \diff z \\
%		&= 1 + \delta_1 \Phi(q) \Big[ e^{- \Phi(q)b} \overline{W}_1^{(q)} (b) + \Phi(q) \int^{b}_0 \overline{W}_1^{(q)} (z) e^{-\Phi(q) z} \diff z\Big].
%	\end{align*}

%(ii) On the other hand,
%\begin{align*}
%	u_2^{(q)}(y) &= u_{1}^{(q)}(y)+\delta_2 \int_{-a}^y W_2^{(q)}(y-z) u^{(q)\prime}_{1} (z) \diff z.
%\end{align*}
%\blue{[Simplified like this. Can you check.]}\\
Applying $u_1^{(q)\prime}(y) = \Phi_0(q) e^{\Phi_0(q) y} - \delta_1 \Phi_0(q) l^{(q)\prime}(-y;b)$ in \eqref{def_u} and changing variables, we obtain the result.
\end{proof}

\begin{lemma} \label{lemma_f} %[made this lemma]
	For $x \geq 0$, we have $\tilde{f}^{(q)}_{a,b}(-x) = f_{a,b}^{(q)}(x)$.	
\end{lemma}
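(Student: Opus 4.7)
The plan is to reduce everything to the scale function identity \eqref{RLqp} by first getting closed forms for $w_1^{(q)}(z;d)$ and $w_2^{(q)}(z;d)$ in \eqref{def_w}, and then substituting into \eqref{def_f_tilde} with $z=-x$.

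First, differentiating \eqref{RLqp} with $k=1$ and then integrating by parts yields the companion identity
\[
\delta_1 \int_0^x W_1^{(q)}(x - v) W_0^{(q)\prime}(v)\, \diff v = W_1^{(q)}(x)\bigl[1 - \delta_1 W_0^{(q)}(0)\bigr] - W_0^{(q)}(x), \quad x \geq 0.
\]
Inserting this (after the substitution $v = u-d$) into the formula for $w_1^{(q)}(z;d)$ in \eqref{def_w}, the two $W_0^{(q)}$ terms cancel and I obtain $w_1^{(q)}(z;d) = [1-\delta_1 W_0^{(q)}(0)]W_1^{(q)}(z-d)$ for $z \geq d$. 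Plugging this into the definition of $w_2^{(q)}$ and reapplying the same identity with indices shifted to $(W_2^{(q)},W_1^{(q)})$ gives, for $d \in [-a,0]$ and $z\geq d$,
\[
w_2^{(q)}(z;d) = \prod_{j=1}^{2}\bigl[1-\delta_j W_{j-1}^{(q)}(0)\bigr]\, W_2^{(q)}(z-d),
\]
while for $d \in [-b,-a]$ and $z\geq -a$ the partial range integral only partially collapses, leaving the hybrid form
\[
w_2^{(q)}(z;d) = [1-\delta_1 W_0^{(q)}(0)]\Bigl\{W_1^{(q)}(z-d) + \delta_2 \int_{-a}^{z} W_2^{(q)}(z-u) W_1^{(q)\prime}(u-d)\, \diff u\Bigr\}.
\]

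Setting $z=-x$ for $x \in [0,a]$, the prefactors above cancel the denominators of \eqref{def_f_tilde} exactly, and the integral over $[-a,0]$ reduces after $s = -y$ to $\int_0^a W_2^{(q)}(s-x)\,\diff s = \overline{W}_2^{(q)}(a-x)$. For the integral $I_1 := \int_{-b}^{-a} w_2^{(q)}(-x;y)\,\diff y$ I apply Fubini to exchange order, use
\[
\int_{-b}^{-a} W_1^{(q)\prime}(u-y)\, \diff y = W_1^{(q)}(u+b) - W_1^{(q)}(u+a),
\]
and substitute $v = -u$ to obtain the right-hand-side in terms of $v \in [x,a]$. A final use of \eqref{RLqp} with $k=2$ (after the change $t = a-v$) recognizes $\delta_2 \int_x^a W_2^{(q)}(v-x) W_1^{(q)}(a-v)\,\diff v = \overline{W}_2^{(q)}(a-x) - \overline{W}_1^{(q)}(a-x)$, and the $\overline{W}_1^{(q)}(a-x)$ contributions cancel. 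Assembling the two pieces with $\delta_1/q$ yields exactly \eqref{def_f}. The degenerate regimes $x \in [a,b]$ (where $w_2^{(q)}(-x;y) = 0$ for $y \in [-a,0]$ and only $y \in [-b,-x]$ contributes to $I_1$) and $x \geq b$ (where both $I_1$ and $I_2$ vanish) are dispatched by the same method, collapsing to the corresponding sub-expressions of $f_{a,b}^{(q)}(x)$.

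The main obstacle is the case $d \in [-b,-a]$: the identity \eqref{RLqp} applies only to convolutions starting from zero, but the integral in the definition of $w_2^{(q)}$ starts at $-a$, so Fubini exchange followed by the $v = -u$ substitution is what realigns the integral into a form where \eqref{RLqp} can finish the job. A secondary bookkeeping point is that in the bounded variation setting $W_k^{(q)}(0)\neq 0$, and the prefactors $1-\delta_j W_{j-1}^{(q)}(0)$ produced by each integration by parts must track through exactly to cancel the denominators in \eqref{def_f_tilde}.
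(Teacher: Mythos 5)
Your proof is correct and follows essentially the same route as the paper: the paper imports the collapse identities $w_1^{(q)}(-x;y) = [1-\delta_1 W_0^{(q)}(0)]\,W_1^{(q)}(-x-y)$ and $\int_{-a}^0 w_2^{(q)}(-x;y)\,\diff y = \prod_{j=1}^2[1-\delta_j W_{j-1}^{(q)}(0)]\,\overline{W}_2^{(q)}(a-x)$ directly from Lemma 14 of \cite{CPRY}, whereas you re-derive them from the differentiated-and-integrated-by-parts form of \eqref{RLqp}, and the remaining steps (Fubini over $y\in[-b,-a]$, the difference $W_1^{(q)}(u+b)-W_1^{(q)}(u+a)$, and the convolution identity for the pair $(W_2^{(q)},W_1^{(q)})$) coincide with the paper's computation in \eqref{w_2_integral_a_b}. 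The only remark worth noting is that both you and the paper invoke the $(W_2^{(q)},W_1^{(q)})$-analogue of \eqref{RLqp}, which is not literally the stated identity but follows by the identical Laplace-transform argument since $\psi_2(\lambda)=\psi_1(\lambda)-\delta_2\lambda$.
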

\begin{proof}
%\green{How about:\\}
%\red{[simplified the proof. Could you check?]}
By Lemma 14 in \cite{CPRY},
%\green{Delete this: [We shall first show]}
\begin{align}\label{iden_1}
	&\frac{\int_{-a}^{0}w_2^{(q)}(-x;y) \diff y}{{\prod_{j=1}^2\left(1-\delta_j W_{j-1}^{(q)}(0)\right)}}=\int_{-a}^{0}W_2^{(q)}(-x-y)\diff y = \overline{W}_2^{(q)}(a-x), \\
%\end{align}
%and
%\begin{align}
\label{iden_2}
	&\frac{w_1^{(q)}(-x; y)}{1-\delta_1 W^{(q)}_0(0)}=W_1^{(q)}(-x-y), \quad -b < y \leq -a.
\end{align}
%\blue{where $\delta_1=\delta_1$ and $\delta_2=\delta_2$. [Kazu: Do you think is better to change the notation from $\delta_1,\delta_2$ to $\delta_1,\delta_2$?]} \red{[Yea, I guess it is better to change to $\delta_1$ and $\delta_2$?]}

From \eqref{def_w}  and \eqref{iden_2}, together with Fubini's theorem and \eqref{RLqp}, we obtain
	\begin{align} \label{w_2_integral_a_b}
	\begin{split}
		&\frac{\int_{-b}^{-a}w_2^{(q)}(-x;y)\diff y}{1-\delta_1 W^{(q)}_0(0)} =\int_{-b}^{-a}\left(W_1^{(q)}(-x-y)+\delta_2\int_{-a}^{-x}W_2^{(q)}(-x-z)W_1^{(q)\prime}(z-y)\diff z \right)\diff y \\
		&=\overline{W}_1^{(q)}(b-x)-\overline{W}_1^{(q)}(a-x)+\delta_2\int_{-a}^{-x}W_2^{(q)}(-x-z)\left(W_1^{(q)}(b+z)-W_1^{(q)}(a+z)\right)\diff z \\
		&=\overline{W}_1^{(q)}(b-x)-\overline{W}_1^{(q)}(a-x)+\delta_2\int_{-a}^{-x}W_2^{(q)}(-x-z)W_1^{(q)}(b+z)\diff z \\&-\left(\overline{W}_2^{(q)}(a-x)-\overline{W}_1^{(q)}(a-x)\right)\\
		&=\overline{W}_1^{(q)}(b-x)-\overline{W}_2^{(q)}(a-x)+\delta_2\int_{-a}^{-x}W_2^{(q)}(-x-z)W_1^{(q)}(b+z)\diff z.
		\end{split}
	\end{align}

								By substituting \eqref{iden_1} and   \eqref{w_2_integral_a_b} in \eqref{def_f_tilde} and applying change of variables, we have the result.%Finally,
							%		\begin{align*}
	%\tilde{f}_{a,b}(-x) &=  \frac{\delta_1}{q}+ \frac{\delta_1}{1-\delta_1 W^{(q)}(0)}\int_{-b}^{-a} w_2^{(q)}(-x;y)  \diff y +\frac{(\delta_1+\beta\delta_2)}{{\prod_{j=1}^2\left(1-\delta_j W_{j-1}^{(q)}(0)\right)}}\int_{-a}^{0} w_2^{(q)}(-x;y) \diff y \\
	%&=\frac{\delta_1}{q}+ \delta_1 \left[\overline{W}_1^{(q)} (b-x) -  \overline{W}_2^{(q)}(a-x)  + \delta_2  \int^{a}_{x} W_2^{(q)} (u-x)  
	%W^{(q)}_1 (b-u)  \diff u\right] \\& +(\delta_1+\beta\delta_2)
%\overline{W}^{(q)} \red{\overline{W}^{(q)}_2?}(a-x) \\
%	&= \delta_1 \Big( \overline{W}_1^{(q)} (b-x)+ \delta_2  \int^{a}_{x} W_2^{(q)} (u-x)  W^{(q)}_1 (b-u)  \diff u \Big)  +\beta\delta_2 \overline{W}^{(q)}_2 (a-x) + \frac {\delta_1} q \\
%	&= f_{a,b}^{(q)}(x),
%	\end{align*}
%	as desired.	
%								Hence,		
%								\begin{align*}
%	&\frac {\int_{-b}^{-a} \diff y w_2^{(q)}(-x;y)} { (1- \delta_1 W^{(q)} (0)) } =  \overline{W}_1^{(q)} (b-x) -  \overline{W}_2^{(q)}(a-x)  + \delta_2  \int^{a}_{x} W_2^{(q)} (u-x)  W^{(q)}_1 (b-u)  \diff u. 	
%								\end{align*}	
\end{proof}
%\subsection{Proof of Lemma \ref{value_fun_explicit}.}
%Hence we obtain that the expected net present value of dividends is given by:
%and \red{[this is not used? remove?]}
%\begin{equation}\label{iden_2}
%\frac{\delta_1}{\left(1-\delta_1 W^{(q)}(0)\right)}\int_{-b}^{-a}w_1^{(q)}(-x %\red{-}y)dy=\int_{-b}^{-a}W_1^{(q)}(-x \red{-}y)dy.
%\end{equation}
%Therefore
%\begin{align}\label{vf_4}
%v_{a,b}(x)&=\frac{\delta_1}{q}\left(1-\frac{u_2^{(q)}(-x)}{u_2^{(q)}(0)}\right)- \red{\frac{\delta_1}{\left(1-\delta_1 W^{(q)}(0)\right)}} \int_{-b}^{-a}\left\{\frac{u_2^{(q)}(-x)}{u_2^{(q)}(0)}w_2^{(q)}(0;y)-w_2^{(q)}(-x;y) \right\}dy\notag\\&-(\delta_1+\beta\delta_2)\int_{-a}^{0} \left\{\frac{u_2^{(q)}(-x)}{u_2^{(q)}(0)}W_2^{(q)}(-y)-W_2^{(q)}(-x-y)\right\}dy.
%\end{align}
\bigskip
\section{Proof of Lemma \ref{verificationlemma}} \label{proof_verificationlemma}
	By the definition of $v$ as a supremum, it follows that $v_{\hat{\pi}}(x)\leq v(x)$ for all $x\in\R$. We write $w:=v_{\hat{\pi}}$ and show that $w(x)\geq v_\pi(x)$ for all $\pi\in\mathcal{A}$ for all $x \geq 0$. 
	
	Fix $\pi\in \mathcal{A}$, $x \geq 0$ and
	let $(T_n)_{n\in\mathbb{N}}$ be the sequence of stopping times defined by $T_n :=\inf\{t>0:V^\pi(t)>n \textrm{ or } {V}^\pi(t)< 1/n \}$. 
	Since ${V}^\pi$ is a semi-martingale and $w$ is sufficiently smooth on $(0, \infty)$ by assumption, we can use the change of variables/Meyer-It\^o's formula 
	(cf.\ Theorems II.31 and II.32 of \cite{protter})  to the stopped process $(e^{-q(t\wedge T_n)}w({V}^\pi(t\wedge T_n)); t \geq 0)$ to deduce under $\mathbb{P}_x$ that %\red{[just omit the following and go directly to the one after ``Rewriting the above"?]}
	%\green{[Simplified here.]}
	%\begin{equation*}
	%\label{impulse_verif_1}
	%	\begin{split}
	%	e^{-q(t\wedge T_n)}w({V}^\pi(t\wedge T_n))-w(x)
	%	= &-\int_{0}^{t\wedge T_n}e^{-qs} q w({V}^\pi(s-)) \mathrm{d}s + \frac{\sigma^2}{2}\int_0^{t\wedge T_n}e^{-qs}w''({V}^\pi(s-))\mathrm{d}s \\
	%	&+\int_{[0, t\wedge T_n]}e^{-qs}w'({V}^\pi(s-)) \mathrm{d}  ( Y(s)- {L}^\pi(s)+{R}^{\pi}(s)  ) \\	
	%	& + \sum_{0 \leq s\leq t\wedge T_n}e^{-qs}[\Delta w({V}^\pi(s-)+\Delta Y(s))-w'({V}^\pi(s-))  \Delta Y(s)  ],
	%	\end{split}
	%	\end{equation*}
	%	where we use the following notation: $\Delta \zeta(s):= \zeta(s)-\zeta(s-)$ and $\Delta w(\zeta(s)):=w(\zeta(s))-w(\zeta(s-))$ for any  process $\zeta$ with left-hand limits. 
	%	Rewriting the above equation leads to 
	\begin{align*}
		e^{-q(t\wedge T_n)}w({V}^\pi(t\wedge T_n))  -w(x)
		&=   \int_{0}^{t\wedge T_n}e^{-qs}   (\mathcal{L}_{-X_1}-q)w({V}^\pi(s-))   \mathrm{d}s
		-\int_{0}^{t\wedge T_n}e^{-qs}w'({V}^\pi(s-))\mathrm{d}{L}^\pi(s)   
		\\&+ \int_0^{t\wedge T_n}e^{-qs}w'({V}^\pi(s-)) \mathrm{d} R^{\pi}(s) + M(t \wedge T_n),
	\end{align*}
	where $M=(M(t);t\geq 0)$ is a local martingale defined in (5.8) of \cite{APWY}.
	%\begin{align}\label{def_M_martingale}
	%\begin{split}
	%&M(t):= \int_0^t \sigma  e^{-qs} w'(V^{\pi}(s-)) \diff B(s) +\lim_{\varepsilon\downarrow 0}\int_{[0,t]} \int_{(\varepsilon,1)}  e^{-qs}w'(V^{\pi}(s-))y (N(\diff s\times \diff y)-\Pi(\diff y) \diff s)\\
	%&+\int_{[0,t]} \int_{(0,\infty)} e^{-qs}(w(V^{\pi}(s-)+y)-w(V^{\pi}(s-))-w'(V^{\pi}(s-))y\mathbf{1}_{\{y\in (0, 1)\}})(N(\diff s\times \diff y)-\Pi(\diff y) \diff s).
	%\end{split}
	%\end{align}
	%Here, $( B(s); s \geq 0 )$ is a standard Brownian motion and $N$ is a Poisson random measure in   the measure space  $([0,\infty)\times (0, \infty),\B [0,\infty)\times \B (0, \infty), \diff s \times \Pi( \diff x))$.
	\par Hence we derive that
	\begin{equation*}
		\begin{split}
			w(x) =&
			-\int_{0}^{t\wedge T_n}e^{-qs}  \left[ (\mathcal{L}_{-X_1}-q)w({V}^\pi(s-))-  \ell^\pi(s) (w'({V}^\pi(s-)) - 1) + r^\pi(s) (w'({V}^\pi(s-))-\beta ) \right]  \mathrm{d}s \\
			& + \int_{0}^{t\wedge T_n}e^{-qs} \ell^\pi(s)\mathrm{d}s- \beta\int_{0}^{t\wedge T_n}e^{-qs} r^\pi(s) \mathrm{d}s - M(t\wedge T_n) + e^{-q(t\wedge T_n)}w({V}^\pi(t\wedge T_n)).
		\end{split}
	\end{equation*}
	Using  the assumption \eqref{HJB-inequality}, the fact that $\ell^\pi(s) \in [0, \delta_1]$ and $r^\pi(s) \in [0, \delta_2]$ a.s.\ 
	for all $s \geq 0$, we have 
	\begin{equation} \label{w_lower}
	\begin{split}
	w(x) \geq &
	\int_{0}^{t\wedge T_n}e^{-qs} \ell^\pi(s)\mathrm{d}s -\beta\int_{0}^{t\wedge T_n}e^{-qs} r^\pi(s) \mathrm{d}s - M(t\wedge T_n)+ e^{-q(t\wedge T_n)}w({V}^\pi(t\wedge T_n)).
	\end{split}
	\end{equation}
	%In addition by the compensation formula (cf.\ Corollary 4.6 of \cite{K}), $(M(t \wedge T_n):t\geq0 )$ is a zero-mean $\mathbb{P}_x$-martingale. 	
	\par
	Note that from \eqref{v_pi_1}, and the fact that $\ell^\pi(s) \in [0, \delta_1]$ and $r^\pi(s) \in [0, \delta_2]$ a.s.\ 
	for all $s \geq 0$, we have  
	%\blue{[How about "from \eqref{v_pi_1}, the fact that $\ell^\pi(s) \in [0, \delta_1]$ and $r^\pi(s) \in [0, \delta_2]$ a.s.\ 
	%	for all $s \geq 0$, we have"]}
	\[
	|w(x)|\leq \frac{\delta_1}{q}+\beta\frac{\delta_2}{q}+|\rho| =: K \quad\text{for $x\in(0,\infty)$.}
	\]
	%	Hence, 
	%	taking expectations  in \eqref{w_lower}, using the fact that by the compensation formula (cf.\ Corollary 4.6 of \cite{K}), $(M(t \wedge T_n):t\geq0 )$ is a zero-mean $\mathbb{P}_x$-martingale,
	%letting $t$ and $n$ go to infinity ($T_n\xrightarrow{n \uparrow \infty} \kappa_0^{\pi}$ $\mathbb{P}_x$-a.s.), and the dominated convergence theorem gives
	
	By the compensation formula (cf.\ Corollary 4.6 of \cite{K}), $(M(t \wedge T_n):t\geq0 )$ is a zero-mean $\mathbb{P}_x$-martingale.  Now by taking expectations  in \eqref{w_lower} and letting 
	$t$ and $n$ go to infinity ($T_n\xrightarrow{n \uparrow \infty} \kappa_0^{\pi}$ $\mathbb{P}_x$-a.s.), the dominated convergence theorem gives
	%\blue{[I think it should be "and dominated convergence" or "the Dominated Convergence Theorem"]} 
	%\green{[removed the first inequality in the previous version.]}
	\begin{align*}
		w(x) \geq %&\lim_{t,n\uparrow\infty}\mathbb{E}_x \left( \int_{0}^{t\wedge T_n}e^{-qs} \ell^\pi(s)\mathrm{d}s-\beta\int_{0}^{t\wedge T_n}e^{-qs} r^\pi(s) \mathrm{d}s 
		%- M(t\wedge T_n)+ e^{-q(t\wedge T_n)}w({V}^\pi(t\wedge T_n))\right) \\
		\mathbb{E}_x \left( \int_{0}^{\kappa_0^{\pi}}e^{-qs} \ell^\pi(s)\mathrm{d}s-\beta\int_{0}^{\kappa_0^{\pi}}e^{-qs} r^\pi(s) \mathrm{d}s + \lim_{t,n\uparrow\infty} e^{-q(t\wedge T_n)}w({V}^\pi(t\wedge T_n))\right).
		% \\ \geq v_\pi(x).
	\end{align*}
	Now the proof is complete because, by \eqref{v_at_zero},
	\begin{align*}
		e^{-q(t\wedge T_n)}w({V}^\pi(t\wedge T_n)) &\geq e^{-q(t\wedge T_n)}w({V}^\pi(t\wedge T_n))  1_{\{ V^\pi (t \wedge T_n ) \leq 1/n \}}- e^{-q(t\wedge T_n)} K 1_{\{ V^\pi (t \wedge T_n ) > 1/n\}} \\ &\xrightarrow{t,n \uparrow \infty} e^{-q \kappa_0^\pi} \rho 1_{\{ \kappa_0^\pi < \infty \}}.
	\end{align*}
	% \green{This completes the proof.- Delete this sentence}\blue{[Agreed.]}
%\end{proof}

\end{appendix}

\end{document}